\pgfplotsset{compat=1.15}
\numberwithin{equation}{section}
\theoremstyle{plain}
   \newtheorem{thm}{Theorem}[section]
   \newtheorem{prop}[thm]{Proposition}
   \newtheorem{lem}[thm]{Lemma}
   \newtheorem{cor}[thm]{Corollary}
   \newtheorem{conj}[thm]{Conjecture}
   \newtheorem{question}[thm]{Question}
   \newtheorem{claim}{Claim}
   \newtheorem*{thm*}{Theorem}
\theoremstyle{definition}
   \newtheorem{defi}[thm]{Definition}
   \newtheorem{exam}[thm]{Example}
\theoremstyle{remark}
   \newtheorem{rmk}[thm]{Remark}
\DeclareMathOperator{\conv}{conv}
\DeclareMathOperator{\peak}{peak}
\DeclareMathOperator{\vrt}{vert}
\newcommand{\oeis}[1]{\href{http://oeis.org/#1}{#1}}
\newcommand{\crown}{\mathcal{C}}
\title{Order polytopes of crown posets}
\author[T. Lundström]{Teemu Lundström}
\author[L.~Saud]{Leonardo Saud Maia Leite}
\address{Department of Mathematics and Systems Analysis, Aalto University, Otakaari 1, 02150, Espoo, Finland}
\email{teemu.lundstrom@aalto.fi}
\address{Department of Mathematics, KTH Royal Institute of Technology, SE-100 44 Stockholm, Sweden}
\email{lsml@kth.se}
\keywords{Crown poset, order polytope, $f$-vector, order polynomial, Ehrhart polynomial, zigzag poset, $h^*$-vector, cyclically alternating permutation, cyclic swap, gamma-nonnegativity}
\thanks{}
\begin{document}
\definecolor{ududff}{rgb}{0.30196078431372547,0.30196078431372547,1}
\definecolor{ttqqqq}{rgb}{0.2,0,0}

\begin{abstract}
    In the last decade, the order polytope of the zigzag poset has been thoroughly studied. A related poset, called \emph{crown poset}, obtained by adding an extra cover relation between the endpoints of an even zigzag poset, is not so well understood. In this paper, we study the order polytopes of crown posets. We provide explicit formulas for their $f$-vectors. We provide recursive formulas for their Ehrhart polynomial, giving a counterpart to formulas found in the zigzag case by Petersen--Zhuang (2025). We use these formulas to simplify a computation by Ferroni--Morales--Panova (2025) of the linear term of the order polynomial of these posets. Furthermore, we provide a combinatorial interpretation for the coefficients of the $h^*$-polynomial in terms of the cyclic swap statistic on cyclically alternating permutations, which provides a circular version of a result by Coons--Sullivant (2023).
\end{abstract}

\maketitle
\thispagestyle{empty}
\setcounter{tocdepth}{2}
\tableofcontents
\newpage

\section*{Acknowledgments}
The first author would like to thank Ragnar Freij-Hollanti for helpful discussions. The second author would like to thank Luis Ferroni for the reviews, suggestions, and motivating words, and Doriann Albertin, Jai Aslam, and Sampada Kolhatkar, for their contribution in Section~\ref{sec-h-vector}. Both authors thank the anonymous referees for their helpful feedback.

\section{Introduction}
\label{intro}
Order polytopes were introduced by Richard P. Stanley \cite{stanley1986two} as a geometric representation of partially ordered sets (posets). They play a significant role in combinatorial geometry, optimization, and algebraic combinatorics. The \emph{zigzag} (or the \emph{fence}) \emph{poset} $Z_n$ on $[n]$ is defined by the cover relations $1 \prec 2 \succ 3 \prec \cdots \succ n-1 \prec n$ if $n$ is even and $1 \prec 2 \succ 3 \prec \cdots \prec n-1 \succ n$ if $n$ is odd, and it has been extensively studied in the literature \cite{coons2019h, zigzag-eulerian}, but the same cannot be said about the crown poset. The \emph{crown poset} $\mathcal{C}_{2n}$ on $[2n]$ is defined by the cover relations $1 \prec 2 \succ 3 \prec \cdots \succ 2n-1 \prec 2n \succ 1$, that is, it corresponds to the zigzag poset $Z_{2n}$ with the extra cover relation $2n \succ 1$.

\begin{figure}[H]
    \centering
    \begin{tikzpicture}[line cap=round,line join=round,>=triangle 45,x=1cm,y=1cm]
        \clip(-16.200778590097016,6.2995524384280617) rectangle (-3,8.529946161913498);
        \draw [line width=1pt] (-14,7)-- (-13,8);
        \draw [line width=1pt] (-13,8)-- (-12,7);
        \draw [line width=1pt] (-9,7)-- (-8,8);
        \draw [line width=1pt] (-8,8)-- (-7,7);
        \draw [line width=1pt] (-7,7)-- (-6,8);
        \draw [line width=1pt] (-6,8)-- (-5,7);
        \draw [line width=1pt] (-12,7)-- (-11,8);
        \draw (-14.074740007757182,7) node[anchor=north west] {1};
        \draw (-13.024426126441885,8.5) node[anchor=north west] {2};
        \draw (-12.020698747926863,7) node[anchor=north west] {3};
        \draw (-11.012736232793635,8.5) node[anchor=north west] {4};
        \draw (-9.066269734520768,7) node[anchor=north west] {1};
        \draw (-8.020190989823679,8.5) node[anchor=north west] {2};
        \draw (-7.02493388454507,7) node[anchor=north west] {3};
        \draw (-6.016971369411841,8.5) node[anchor=north west] {4};
        \draw (-5.0,7) node[anchor=north west] {5};
        \begin{scriptsize}
            \draw [fill=ttqqqq] (-14,7) circle (1.5pt);
            \draw [fill=ttqqqq] (-13,8) circle (1.5pt);
            \draw [fill=ttqqqq] (-12,7) circle (1.5pt);
            \draw [fill=ttqqqq] (-11,8) circle (1.5pt);
            \draw [fill=ttqqqq] (-9,7) circle (1.5pt);
            \draw [fill=ttqqqq] (-8,8) circle (1.5pt);
            \draw [fill=ttqqqq] (-7,7) circle (1.5pt);
            \draw [fill=ttqqqq] (-6,8) circle (1.5pt);
            \draw [fill=ttqqqq] (-5,7) circle (1.5pt);
        \end{scriptsize}
    \end{tikzpicture}
    \caption{The Hasse diagrams of $Z_4$ (left) and $Z_5$ (right).}
\end{figure}
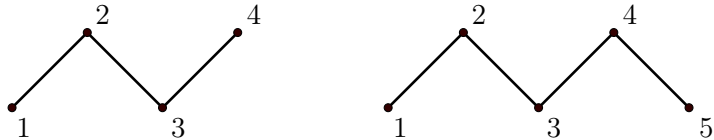

\begin{figure}[H]
    \centering
    \begin{tikzpicture}[line cap=round,line join=round,>=triangle 45,x=1cm,y=1cm]
        \clip(-9,6.2995524384280617) rectangle (-2,8.529946161913498);
        \draw [line width=1pt] (-8,7)-- (-7,8);
        \draw [line width=1pt] (-7,8)-- (-6,7);
        \draw [line width=1pt] (-6,7)-- (-5,8);
        \draw [line width=1pt] (-3,8)-- (-8,7);
        \draw [line width=1pt] (-5,8)-- (-4,7);
        \draw [line width=1pt] (-4,7)-- (-3,8);
        \draw (-8.0,7) node[anchor=north west] {1};
        \draw (-7.0,8.5) node[anchor=north west] {2};
        \draw (-6.0,7) node[anchor=north west] {3};
        \draw (-5.0,8.5) node[anchor=north west] {4};
        \draw (-3.0,8.5) node[anchor=north west] {6};
        \draw (-4.016971369411841,7) node[anchor=north west] {5};
        \begin{scriptsize}
            \draw [fill=ttqqqq] (-8,7) circle (1.5pt);
            \draw [fill=ttqqqq] (-7,8) circle (1.5pt);
            \draw [fill=ttqqqq] (-6,7) circle (1.5pt);
            \draw [fill=ttqqqq] (-5,8) circle (1.5pt);
            \draw [fill=ttqqqq] (-4,7) circle (1.5pt);
            \draw [fill=ttqqqq] (-3,8) circle (1.5pt);
        \end{scriptsize}
    \end{tikzpicture}
    \caption{The Hasse diagram of $\mathcal{C}_6$.}
    \label{fig-crown-poset}
\end{figure}
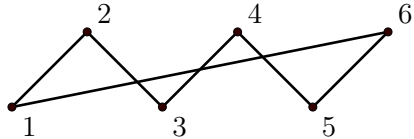
In this paper we study the order polytope of the crown poset $\mathcal{C}_{2n}$. 
The crown poset $\mathcal{C}_{2n}$ is the poset of proper faces of a polygon with $n$ vertices.
The motivation for studying these posets came from studying order polytopes constructed from face lattices of more complicated polytopes.
We will discuss this more in Section \ref{sec-open-further}.
\medskip

The following is an outline of the paper with our main results highlighted.
\medskip

In Section~\ref{section:preliminaries}, we quickly mention the relevant background related to order polytopes and Ehrhart theory, as well as fix some notation.
In Section~\ref{sec-f-vector}, we study the $f$-vector of $\mathcal{O}(\mathcal{C}_{2n})$ and provide an explicit formula for each of its entries.
These formulas are obtained by using the well-known connection between faces of the order polytope and the so called connected and compatible partitions of the underlying poset.
We count these partitions using elementary combinatorial methods.

\begin{thm*}
    (Theorem \ref{therom:formula_for_fvector})
    For all $k \ge 0$,
    \begin{equation*}
        f_k(\mathcal{O}(\crown_{2n})) = \delta_k  + \sum_{i=2}^{2n} \sum_{m = 1}^{\lfloor i/2 \rfloor} \frac{2n}{i} \binom{i}{2m} \binom{n+m-1}{i-1} \binom{2m}{i-k},
    \end{equation*}
    where 
    \begin{equation*}
        \delta_k = 
        \begin{cases}
            2 &\text{ if } k = 0 \\
            1 &\text{ if } k = 1 \\
            0 &\text{ if } k > 1
        \end{cases}\,.
    \end{equation*}
\end{thm*}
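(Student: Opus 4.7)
The plan is to enumerate $k$-faces via the bijection between faces of $\mathcal{O}(P)$ and connected compatible partitions of $P$---equivalently, partitions of $\hat{P} = \{\hat{0}\}\sqcup P\sqcup\{\hat{1}\}$ into $k+2$ order-convex, Hasse-connected blocks with $\hat{0},\hat{1}$ in distinct blocks and inducing a valid poset quotient. For $P = \crown_{2n}$, order-convexity is automatic (the poset has height one), so middle blocks are exactly connected sub-arcs of the Hasse cycle $C_{2n}$, while the block $B_-$ containing $\hat{0}$ takes the form $\{\hat{0}\}\cup I$ for an order ideal $I$, and dually $B_+ = \{\hat{1}\}\cup F$ for an order filter $F$.

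I would classify each arc obtained by cutting $C_{2n}$ by the parities of its two endpoints: \emph{valley} (V; both odd), \emph{peak} (P; both even), \emph{up} (U; odd then even), and \emph{down} (D; even then odd). Two structural facts drive the count. First, any proper order ideal of $\crown_{2n}$ decomposes as a disjoint union of V-arcs, and dually for order filters and P-arcs; in particular, only V-arcs may be absorbed into $B_-$ and only P-arcs into $B_+$. Second, matching endpoint parities around the cycle forces V- and P-arcs to alternate, so they occur in equal numbers $m$. The quotient is automatically a valid poset whenever $m \geq 1$ or at least one arc is absorbed, since the cyclic sequence of $i$ cut-edge directions then fails to be constant; the case $m=0$ with no absorption yields an oriented cycle in the quotient and contributes no faces.

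For fixed $i \geq 2$ and $m \geq 1$, I would enumerate the arc configurations by a marking argument. Assign to each arc of length $\ell$ the positive integer parameter $\lceil \ell/2 \rceil$; these $i$ parameters sum to $n + m$. Distinguishing one V-arc and choosing its starting odd vertex gives $n$ choices; recording the D-run gap sizes $(\delta_j)_{j=1}^m$ between consecutive V's and P's and the U-run gap sizes $(\varepsilon_j)_{j=1}^m$ between consecutive P's and V's (nonnegative integers with $\sum \delta_j + \sum\varepsilon_j = i - 2m$) gives $\binom{i-1}{2m-1}$; and recording the $i$ parameters themselves as a composition of $n + m$ into $i$ positive parts gives $\binom{n+m-1}{i-1}$. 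Dividing by the $m$ possible markings and using the identity $\tfrac{n}{m}\binom{i-1}{2m-1} = \tfrac{2n}{i}\binom{i}{2m}$ yields $\tfrac{2n}{i}\binom{i}{2m}\binom{n+m-1}{i-1}$ configurations. Selecting $a$ V-arcs for $B_-$ and $b$ P-arcs for $B_+$ with $a + b = i - k$ gives $\binom{2m}{i-k}$ labellings via Vandermonde. The degenerate $i = 1$ case (the whole cycle as one block, either absorbed into $B_-$, absorbed into $B_+$, or left as a middle block) accounts for the $\delta_k$ correction. The main obstacle is establishing the V/P-alternation together with the compatibility criterion; after that, the remainder is a routine cyclic-composition calculation.
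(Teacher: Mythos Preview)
Your proposal is correct and follows essentially the same route as the paper: both identify the middle blocks of a CCP of $\hat{\crown}_{2n}$ with arcs of the Hasse cycle, observe that only the odd-length arcs (your V- and P-arcs, the paper's $B_i$ and $D_i$) can be merged into the $\hat 0$- and $\hat 1$-blocks, and obtain the same product $\tfrac{2n}{i}\binom{i}{2m}\binom{n+m-1}{i-1}\binom{2m}{i-k}$ with the same $\delta_k$ correction coming from the single-arc case. The one genuine difference is in how the arc configurations are enumerated: the paper deletes one edge to linearize the cycle into a path and then pads each odd block by a vertex, whereas you linearize by marking a distinguished V-arc and recording gap multiplicities and half-lengths---both are standard cyclic-composition arguments and land on the same formula.
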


In Section~\ref{sec-ehrhart}, we study the Ehrhart polynomial of $\mathcal{O}(\mathcal{C}_{2n})$. We provide a recursive formula for the order polynomial of $\mathcal{C}_{2n}$ in terms of order polynomials of zigzags, which gives a recursive expression for the Ehrhart polynomial of $\mathcal{O}(\mathcal{C}_{2n})$.
This provides a counterpart to the formulas found in the zigzag case in \cite{zigzag-eulerian}. 
This also allows us to simplify the computation for the linear coefficient of the order polynomial $\Omega_{\crown_{2n}}(t)$ made in \cite{ferroni2025skew}.

\begin{thm*}[Theorem \ref{ehrhart-recursion}]
    \begin{align*}
        \Omega_{\mathcal{C}_{2n}}(t) &= \Omega_{\mathcal{C}_{2n}}(t-1) + n \cdot \Omega_{Z_{2n-1}}(t-1) + \Omega_{Z_{2n-3}}(t) + \sum_{1 \leq i < j \leq n} \Omega_{Z_{2(n-j+i)-1}}(t-1) \cdot \Omega_{Z_{2(j-i)-1}}(t) \\
        &= \Omega_{\mathcal{C}_{2n}}(-t) - n \cdot \Omega_{Z_{2n-1}}(-t) + t \cdot \Omega_{Z_{2n-3}}(t) - \sum_{\substack{1 \leq i < j \leq n \\ (i,j) \neq (1,n)}} \Omega_{Z_{2(n-j+i)-1}}(-t) \cdot \Omega_{Z_{2(j-i)-1}}(t)
    \end{align*}
\end{thm*}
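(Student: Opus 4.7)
The plan is to prove the first identity by a bijective case analysis on weak order-preserving maps $f\colon\mathcal{C}_{2n}\to[t]$, and to deduce the second identity from it by Stanley's order-polynomial reciprocity.

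For the first identity, I partition the maps according to the set $V(f)=\{k\in[n]\colon f(2k-1)=1\}$ of valleys attaining the minimum value $1$. If $V(f)=\emptyset$, then every value of $f$ is at least $2$, and translating by $-1$ yields a bijection with weak maps into $[t-1]$, contributing $\Omega_{\mathcal{C}_{2n}}(t-1)$. If $V(f)=\{k\}$, deleting the unique value-$1$ valley from the cycle leaves a zigzag on $2n-1$ vertices whose remaining values are all $\geq 2$; shifting contributes $\Omega_{Z_{2n-1}}(t-1)$ per choice of $k$, for a total of $n\cdot\Omega_{Z_{2n-1}}(t-1)$. If $|V(f)|\geq 2$, set $i=\min V(f)<j=\max V(f)$; removing the two extremal valleys $2i-1$ and $2j-1$ splits the cycle into an inner arc on indices $2i,\dots,2j-2$ (length $2(j-i)-1$) and an outer arc wrapping through $2n$ (length $2(n-j+i)-1$). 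Valleys on the inner arc are unconstrained, giving the factor $\Omega_{Z_{2(j-i)-1}}(t)$; valleys on the outer arc are $\geq 2$ and, after shifting, give the factor $\Omega_{Z_{2(n-j+i)-1}}(t-1)$ as long as the outer arc contains at least one valley.

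The exceptional pair $(i,j)=(1,n)$ is where the outer arc degenerates to the single peak $2n$, which is free in $[t]$ rather than $[t-1]$, so its true contribution is $t\cdot\Omega_{Z_{2n-3}}(t)=\Omega_{Z_1}(t-1)\cdot\Omega_{Z_{2n-3}}(t)+\Omega_{Z_{2n-3}}(t)$. The first summand is precisely the $(1,n)$ term of the general sum $\sum_{i<j}$, and the second summand becomes the standalone $\Omega_{Z_{2n-3}}(t)$; adding the three cases yields the first identity.

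For the second identity, I apply the peak-shift bijection $g(v)=f(v)$ for every valley $v$ and $g(p)=f(p)-1$ for every peak $p$, which is a bijection between strict order-preserving maps $P\to[t]$ and weak order-preserving maps $P\to[t-1]$ for the posets $P=\mathcal{C}_{2n}$ and $P=Z_{2k-1}$ with $k\geq 2$. Combined with Stanley's reciprocity $\Omega_P(-t)=(-1)^{|P|}\cdot\#\{\text{strict maps }P\to[t]\}$, this yields $\Omega_{\mathcal{C}_{2n}}(t-1)=\Omega_{\mathcal{C}_{2n}}(-t)$ and $\Omega_{Z_{2k-1}}(t-1)=-\Omega_{Z_{2k-1}}(-t)$ for all $k\geq 2$. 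Substituting these identities into the first formula converts each $\Omega(t-1)$ into the corresponding $\pm\Omega(-t)$, except for the $(i,j)=(1,n)$ summand, whose outer factor is $\Omega_{Z_1}(t-1)$ and for which the bijection breaks down since $Z_1$ has no peaks to shift; that summand is therefore excluded from the sum in the second identity, and the resulting discrepancy is absorbed into the standalone term $t\cdot\Omega_{Z_{2n-3}}(t)$. The main obstacle will be the careful bookkeeping in the degenerate single-peak arcs: this is precisely what forces the appearance of the standalone $\Omega_{Z_{2n-3}}(t)$ in the first identity and what explains the exclusion of $(1,n)$ from the sum in the second.
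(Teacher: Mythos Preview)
Your proof is correct and uses essentially the same strategy as the paper's. The one twist is that you partition by which \emph{valleys} achieve the minimum value $1$, whereas the paper partitions by which \emph{peaks} achieve the maximum value $m$; by the self-duality of $\mathcal{C}_{2n}$ these decompositions are mirror images. Your exceptional term $\Omega_{Z_{2n-3}}(t)$ arises from the degenerate pair $(i,j)=(1,n)$ where the outer arc collapses to a single free peak, while the paper instead isolates the case $f(1)=m$ (forcing $f(2)=f(2n)=m$ and leaving a free zigzag on $\{3,\dots,2n-1\}$); the arithmetic is identical. For the second identity, your peak-shift bijection is exactly the content of the paper's Lemma~\ref{reciprocity-graded-posets} specialized to $1$-graded posets, and both proofs then absorb the $(1,n)$ summand into $t\cdot\Omega_{Z_{2n-3}}(t)$ for the same reason: $Z_1$ is $0$-graded, so the shift $\Omega_{Z_1}(t-1)=-\Omega_{Z_1}(-t)$ fails.
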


In Section~\ref{sec-h-vector}, we study the $h^*$-polynomial of $\mathcal{O}(\mathcal{C}_{2n})$. We give a new combinatorial interpretation of its coefficients in terms of a new permutation statistic that we call \emph{cyclic swap} and discuss some combinatorial properties of this statistic.
This provides a circular version of a result for the zigzag poset studied in \cite{coons2019h}.
\begin{thm*}
    (Theorem \ref{h-characterization})
    For any $n \ge 1$,
   \begin{equation*}
       h^*(\mathcal{O}({\mathcal{C}_{2n}}),t) = \sum_{\sigma \in \operatorname{CA_{2n}}} t^{\operatorname{cswap}(\sigma)}.
    \end{equation*}
\end{thm*}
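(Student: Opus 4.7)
The plan is to combine Stanley's theory of $(P,\omega)$-partitions with a careful analysis of cyclically alternating permutations. Stanley's theorem gives, for any labeled poset $(P,\omega)$,
\[
h^*(\mathcal{O}(P),t) \;=\; \sum_{L\in\mathcal{L}(P)} t^{\operatorname{des}_\omega(L)},
\]
where the sum runs over linear extensions and descents are computed with respect to $\omega$. I would take $\omega$ to be the canonical labeling of $\crown_{2n}$ coming from the vertex labels $1,2,\ldots,2n$.

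The first step is to identify $\mathcal{L}(\crown_{2n})$ with $\operatorname{CA}_{2n}$. Representing a linear extension $L=(\ell_1,\ldots,\ell_{2n})$ by the permutation $\pi$ with $\pi(i)$ equal to the rank of $i$ in $L$, the cover relations of $\crown_{2n}$ translate to $\pi(2k-1)<\pi(2k)>\pi(2k+1)$ and $\pi(1)<\pi(2n)$, which is exactly the defining condition of a cyclically alternating permutation. Under this bijection, the descent statistic $\operatorname{des}_\omega(L)$ on the linear extension equals the ordinary descent statistic of $\pi^{-1}$ in one-line notation.

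Having reduced the theorem to showing
\[
\sum_{\pi \in \operatorname{CA}_{2n}} t^{\operatorname{des}(\pi^{-1})} \;=\; \sum_{\pi \in \operatorname{CA}_{2n}} t^{\operatorname{cswap}(\pi)},
\]
the most direct route is to construct an explicit bijection on $\operatorname{CA}_{2n}$ sending one statistic to the other, following the template of Coons--Sullivant's argument for the zigzag case but adapted to incorporate the extra cover relation $1\prec 2n$. A backup plan is to argue inductively using Theorem \ref{ehrhart-recursion}, which decomposes the Ehrhart polynomial of $\mathcal{O}(\crown_{2n})$ into contributions from zigzag subposets where the Coons--Sullivant result applies directly; matching coefficients of these two generating polynomials via the recursion would pin down the equality in the theorem.

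The main obstacle is this statistic-matching step. The cyclic constraint produces a wrap-around contribution absent in the linear (zigzag) case, so a direct adaptation of the Coons--Sullivant bijection typically fails at the positions neighboring the edge $1\prec 2n$. Getting the correction right requires a careful understanding of how $\operatorname{cswap}$ behaves under cyclic rotations of the underlying permutation and how these rotations interact with the descent set of $\pi^{-1}$. I expect most of the combinatorial work to be concentrated here, with the rest of the proof being largely a translation between language of linear extensions and language of cyclically alternating permutations.
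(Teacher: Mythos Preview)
Your first step is incorrect, and this breaks the whole reduction. The identity
\[
h^*(\mathcal{O}(P),t)=\sum_{L\in\mathcal{L}(P)} t^{\operatorname{des}_\omega(L)}
\]
does \emph{not} hold for an arbitrary labeling $\omega$; it requires $\omega$ to be natural. The labeling of $\crown_{2n}$ you choose (the one for which linear extensions are exactly inverses of cyclically alternating permutations) is not natural: for instance $3\prec 2$ in $\crown_{2n}$ while $3>2$. For a non-natural $\omega$ the generating function $\sum_L t^{\operatorname{des}_\omega(L)}$ is the numerator of the $(P,\omega)$-partition series, not of the Ehrhart series of $\mathcal{O}(P)$. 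The $n=2$ case already shows the discrepancy: over $\operatorname{CA}_4=\{1324,1423,2314,2413\}$ one computes $\operatorname{des}(\pi^{-1})=1,1,1,2$, so your proposed expression gives $3t+t^2$, whereas $h^*(\mathcal{O}(\crown_4),t)=1+2t+t^2$. Thus the ``reduction'' to $\sum_\pi t^{\operatorname{des}(\pi^{-1})}=\sum_\pi t^{\operatorname{cswap}(\pi)}$ is based on a false premise, and in fact that equality is wrong.

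The paper avoids this issue entirely by \emph{not} going through Stanley's descent formula. Instead it proves directly that any ordering of $\operatorname{CA}_{2n}$ by inversion number is a shelling of the canonical triangulation of $\mathcal{O}(\crown_{2n})$ (Theorem~\ref{shelling}), using Lemma~\ref{lem:swapexists} and Propositions~\ref{prop:mininversion}--\ref{prop:uniqueminperm} to control which facets of $\Delta^\sigma$ meet earlier simplices. Proposition~\ref{prop:intinfacet} then identifies those facets with cyclic swaps, so each $\Delta^\sigma$ is attached along exactly $\operatorname{cswap}(\sigma)$ facets, and Theorem~\ref{hshelling} converts this into the stated $h^*$-formula. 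The equidistribution you were aiming for (between $\operatorname{des}$ on $\operatorname{JH}(\crown_{2n})$ with a \emph{natural} labeling and $\operatorname{cswap}$ on $\operatorname{CA}_{2n}$) is then a \emph{corollary} of the theorem, not an input to it. If you want to salvage your strategy, you would have to start from a natural labeling, in which case the linear extensions are no longer inverses of CAPs and the bijective step becomes the entire content; you have only sketched that such a bijection should exist.
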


We also collect together known results to obtain that the $h^*$-polynomial of $\mathcal{O}(\crown_{2n})$ is $\gamma$-nonnegative for every natural number $n$.
\begin{thm*}
    (Theorem \ref{thm:gamma_nonneg})
    For each $n \geq 1$, the polynomial $h^*(\mathcal{O}({\mathcal{C}_{2n}}), t)$ has a nonnegative expansion in the basis $\Gamma_{2n-2}$, that is, there are nonnegative integers $\gamma_{2n,j}$ such that
    \[
    h^*(\mathcal{O}({\mathcal{C}_{2n}}),t) = \sum_{0 \leq j \leq n-1} \gamma_{2n,j} t^j (1+t)^{2n-2-2j}
    \]
    and
    \[
    \gamma_{2n,j} = 2^{-2n+2+2j} |\{ \pi \in \operatorname{JH}(\Tilde{\mathcal{C}}_{2n}, \Tilde{\omega}) \colon \peak(\pi) = j+1 \}|,
    \]
    where $\omega$ is any natural labeling of $\mathcal{C}_{2n}$ and $\peak(\pi) = |\{ i \colon a_{i-1} < a_i > a_{i+1} \}|$.
\end{thm*}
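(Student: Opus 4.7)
The plan is to assemble the statement from three known ingredients. First, by a classical theorem of Stanley, for any naturally labeled poset $(P,\omega)$ one has $h^*(\mathcal{O}(P), t) = \sum_{\pi \in \mathcal{L}(P,\omega)} t^{\operatorname{des}(\pi)}$, where $\mathcal{L}(P,\omega)$ denotes the linear extensions of $P$ written in the labeling $\omega$. Second, since $\mathcal{C}_{2n}$ is bipartite and in particular sign-graded, Br\"and\'en's theorem on $W$-polynomials of sign-graded posets yields $\gamma$-nonnegativity of this descent polynomial. Third, a peak-counting interpretation of the $\gamma$-coefficients, obtained via a Foata--Strehl (``valley hopping'') action on linear extensions of the bounded poset $\tilde{P} = P \cup \{\hat{0}, \hat{1}\}$, produces the precise expression $\gamma_{2n,j} = 2^{-2n+2+2j}\,|\{\pi \in \operatorname{JH}(\tilde{\mathcal{C}}_{2n}, \tilde\omega) : \peak(\pi) = j+1\}|$.

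First I would fix any natural labeling $\omega$ of $\mathcal{C}_{2n}$, extend it to a natural labeling $\tilde\omega$ of $\tilde{\mathcal{C}}_{2n}$, and rewrite $h^*(\mathcal{O}(\mathcal{C}_{2n}), t)$ via Stanley's formula. Both sides of the claimed identity depend only on the unlabeled poset, so independence of the choice of natural labeling is automatic. The palindromicity of $h^*(\mathcal{O}(\mathcal{C}_{2n}), t)$ of degree $2n-2$, required for the $\gamma$-expansion to make sense, follows from the Gorensteinness of $\mathcal{O}(\mathcal{C}_{2n})$, which holds because $\mathcal{C}_{2n}$ is pure (every maximal chain is a single cover relation, so $\tilde{\mathcal{C}}_{2n}$ is graded). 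As an alternative one may exhibit a complementation-type involution on cyclically alternating permutations sending $\operatorname{cswap}(\sigma)$ to $2n - 2 - \operatorname{cswap}(\sigma)$ and invoke Theorem~\ref{h-characterization}.

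Next I would apply the peak formula. The valley-hopping action partitions $\operatorname{JH}(\tilde{\mathcal{C}}_{2n}, \tilde\omega)$ into orbits, each orbit represented by a unique ``peak-only'' extension whose non-peak positions have been converted into valleys; an orbit indexed by an extension with peak number $j+1$ has size $2^{2n-2-2j}$, because the non-peak, non-valley positions can independently be flipped between ascents and descents, and the orbit contributes a term $t^{j}(1+t)^{2n-2-2j}$ to $\sum_\pi t^{\operatorname{des}(\pi)}$. Summing over orbits and normalizing by the orbit size yields the claimed $\gamma$-expansion, with the factor $2^{-2n+2+2j}$ being exactly the reciprocal of the orbit size.

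The main obstacle is aligning the exact bookkeeping between the crown poset and its bounded version: verifying that the peak count on extensions of $\tilde{\mathcal{C}}_{2n}$, which always begin with $\hat 0$ and end with $\hat 1$, is shifted by exactly $+1$ compared to the $\gamma$-index, and that the prefactor is genuinely $2^{-2n+2+2j}$ rather than one altered by the cyclic symmetry of $\mathcal{C}_{2n}$. I would resolve this by checking the base cases $n = 1, 2, 3$, comparing $h^*(\mathcal{O}(\mathcal{C}_{2n}), t)$ from Theorem~\ref{h-characterization} against the peak enumerator on $\operatorname{JH}(\tilde{\mathcal{C}}_{2n}, \tilde\omega)$ by direct enumeration.
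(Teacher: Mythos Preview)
Your strategy is essentially the paper's own: both reduce to Br\"and\'en's results on sign-graded posets. The paper does nothing more than observe that $\mathcal{C}_{2n}$, being naturally $1$-graded, is sign-graded, then quote \cite[Theorem~4.2]{MR2120105} for the $\gamma$-nonnegativity and \cite[Theorem~6.4]{MR2388387} for the peak interpretation of the coefficients; it gives no independent valley-hopping argument.

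Two points of your bookkeeping need correction, and they are exactly the ones you flagged as ``the main obstacle.'' First, $\tilde{\mathcal{C}}_{2n}$ is obtained by adjoining only a greatest element $\hat 1$, not both $\hat 0$ and $\hat 1$; so $|\tilde{\mathcal{C}}_{2n}| = 2n+1$ and its linear extensions do not ``begin with $\hat 0$.'' Second, and more seriously, $\tilde\omega$ is \emph{not} a natural labeling of $\tilde{\mathcal{C}}_{2n}$: it is a \emph{canonical labeling} in Br\"and\'en's sense \cite[p.~9]{MR2388387}, in which the new maximum receives the smallest label (see the worked example for $n=2$ in the paper, where $\hat 1$ is labeled $1$). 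Under a natural labeling the last letter of every linear extension would be $2n+1$, position $2n$ could never be a peak, and the orbit-size count $2^{2n-2-2j}$ in your valley-hopping sketch would come out wrong. Your plan to verify small cases would expose both discrepancies, but it would not repair the general argument; you should instead adopt the canonical labeling from the start and then your outline goes through verbatim as a specialization of Br\"and\'en's theorem.
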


In Section~\ref{sec-open-further}, we present some open questions and further directions.

\section{Preliminaries}\label{section:preliminaries}
In this section we quickly give some conventions and notation used in this paper.
We mostly follow standard terminology from polytope theory and Ehrhart theory (see e.g.\ \cite{beck2007computing}).

We consider the empty set $\emptyset$ as a face of a polytope of dimension $-1$.
The full polytope is also considered to be a face.
The $f$\emph{-vector} of a $d$-polytope $\mathfrak{p}$ is denoted by $f(\mathfrak{p}) = (f_{-1}(\mathfrak{p}),f_0(\mathfrak{p})\dots,f_d(\mathfrak{p}))$ where $f_i(\mathfrak{p})$ is the number of $i$-dimensional faces of $\mathfrak{p}$.
If $\mathfrak{p} \subseteq \mathbb{R}^d$ is a lattice polytope then the integer point counting function $k \mapsto \# (k\mathfrak{p} \cap \mathbb{Z}^d) \colon \mathbb{Z}_{>0} \to \mathbb{Z}_{\ge 0}$ agrees with a polynomial, called the \emph{Ehrhart polynomial}, which we denote by $L(\mathfrak{p},k)$.
The series $1 + \sum_{k \ge 1} L(\mathfrak{p},k) t^k$ can be written as a rational function
\begin{equation*}
    1 + \sum_{k \ge 1} L(\mathfrak{p},k) t^k = \dfrac{ h_0^{*}(\mathfrak{p}) + h_1^{*}(\mathfrak{p})t+ \cdots + h_d^{*}(\mathfrak{p})t^d}{(1-t)^{d+1}}
\end{equation*}
where the polynomial $h_\mathfrak{p}^{*}(t) = h_0^{*}(\mathfrak{p}) + h_1^{*}(\mathfrak{p})t+ \cdots + h_d^{*}(\mathfrak{p})t^d$ is called the $h^*$\emph{-polynomial} of $\mathfrak{p}$.

Next, we quickly recall basic facts about order polytopes.
For more on order polytopes see \cite{stanley1986two}.
Let $P = ([d],\preceq)$ be a finite poset.
The \emph{order polytope} $\mathcal{O}(P)$ is the polytope
\begin{equation*}
    \{ x = (x_1, \ldots, x_d) \in [0,1]^d \mid x_i \le x_j \text{ for all } i \preceq j \text{ in } P \}.
\end{equation*}
These are full-dimensional polytopes, that is, $\dim(\mathcal{O}(P)) = |P|$.
The vertices of $\mathcal{O}(P)$ are in bijection with the indicator vectors of filters of $P$ and the facet-defining inequalities of $\mathcal{O}(P)$ are given by 
\begin{align*}
    0 \le x_i &\qquad \text{for all $i$ minimal} \\
    x_j \le 1 &\qquad \text{for all $j$ maximal} \\
    x_i \le x_j &\qquad \text{for all cover relations $i \prec j$} \smash{\text{\quad\raisebox{1.2\baselineskip}{.}}}
\end{align*}
Order polytopes also admit unimodular triangulations, which are constructed as follows: let $\sigma$ be a linear extension of $P$, and 
let $v_0^\sigma = \sum_{i=1}^d e_i$. Recursively define $v_i^{\sigma}$ to be $v_i^{\sigma} = v_{i-1}^\sigma - e_{\sigma^{-1}(i)}$ and then define $\Delta^\sigma$ to be the convex hull of $v_0^{\sigma}, v_1^{\sigma}, \ldots, v_d^{\sigma}$. Then the simplices $\Delta^\sigma$ corresponding to all linear extensions $\sigma$ of $P$ form a unimodular triangulation of $\mathcal{O}(P)$ which we will refer to as the \emph{canonical triangulation}.

\section{The \texorpdfstring{$f$}--vector of \texorpdfstring{$\mathcal{O}(\crown_{2n})$}-}
\label{sec-f-vector}

We start by studying the $f$-vector of $\mathcal{O}(\crown_{2n})$. 
Since $\crown_{2n}$ has $n$ minimal elements, $n$ maximal elements and $2n$ cover relations, it follows that $\mathcal{O}(\crown_{2n})$ has $4n$ facets.
In fact, we can compute explicitly each entry of the $f$-vector of $\mathcal{O}(\crown_{2n})$. 
We will approach this by using the well-known characterization of faces of order polytopes in terms of so called connected and compatible partitions of the underlying poset.
We begin by reviewing the related terminology.

Fix a finite poset $P$ and let $\pi = \{ B_1,\dots,B_k \}$ be a partition of the ground set of $P$.
We call the sets $B_i$ either \emph{parts} or \emph{blocks}.
We say that $\pi$ is \emph{connected} if each $B_i$, considered as an induced subposet, is connected.
Recall that a poset is connected if its comparability graph is connected.
We introduce an order relation $\le_\pi$ on the blocks of $\pi$ by setting $B_i \le_\pi B_j$ if there is some $x \in B_i$ and $y \in B_j$ such that $x \le y$ in $P$.
Note that $\le_\pi$ is reflexive but not necessarily transitive or antisymmetric.
If the transitive closure of $\le_\pi$ is antisymmetric, we say $\pi$ is \emph{compatible}.
Note that the transitive closure of $\le_\pi$ is antisymmetric if and only if for all blocks of $\pi$
\begin{equation*}
    B_i \le_\pi  \dots \le_\pi B_j \le_\pi \dots \le_\pi B_i \quad \text{ implies } B_i = B_j.
\end{equation*}
A sequence of blocks of the form $B_i \le_\pi  \dots \le_\pi B_j \le_\pi \dots \le_\pi B_i$ is called a \emph{cycle}.
Hence, $\pi$ is compatible if and only if there are no cycles among the blocks of $\pi$, other than cycles consisting of a single block.
We often write just $\le$ instead of $\le_\pi$.
If $\pi$ is a connected and compatible partition we call $\pi$ a \emph{CCP} for short.
If $|\pi| = k$ and $\pi$ is a CCP we call $\pi$ a $k$-CCP.

The faces of $\mathcal{O}(P)$ turn out to correspond to, not CCPs of $P$, but CCPs of $\hat P = P \cup \{ \hat 0, \hat 1 \}$, which is the poset obtained from $P$ by adding a minimum element $\hat 0$ and a maximum element $\hat 1$.

\begin{thm}[\cite{stanley1986two}]
    Let $P$ be a finite poset of size $d$.
    Let $0 \le k \le d$.
    The $k$-faces of $\mathcal{O}(P)$ are in bijection with the $(k+2)$-CCPs of $\hat P$.
\end{thm}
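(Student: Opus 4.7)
The plan is to build the bijection explicitly in both directions. Given a face $F$ of $\mathcal{O}(P)$, pick any point $x$ in its relative interior, extend to $\hat P$ by $x_{\hat 0}=0$ and $x_{\hat 1}=1$, and define a partition $\pi_F$ by declaring $i \sim j$ whenever there is a sequence $i=a_0,a_1,\dots,a_r=j$ in $\hat P$ with consecutive elements comparable and $x_{a_\ell}=x_{a_{\ell+1}}$. Each block is connected by construction. Compatibility I would prove by the standard ``constants agree around a cycle'' argument: a nontrivial cycle $B_0 \le_\pi B_1 \le_\pi \cdots \le_\pi B_r \le_\pi B_0$ forces the constant $x$-values on these blocks to form a weakly increasing cyclic sequence of reals, hence all equal; but then the witnesses of consecutive relations are comparable pairs with equal $x$-value, so by definition of $\sim$ they lie in the same block, collapsing the cycle.

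For the inverse direction, given a $(k+2)$-CCP $\pi$ of $\hat P$, I set
\[
F_\pi = \{ x \in \mathcal{O}(P) : x_i = x_j \text{ whenever } i,j \text{ lie in the same block of } \pi \},
\]
with the convention $x_{\hat 0}=0$, $x_{\hat 1}=1$. Compatibility promotes the transitive closure of $\le_\pi$ to a genuine partial order on blocks, so I take a linear extension and assign strictly increasing values $0=c_0<c_1<\cdots<c_{k+1}=1$ to the $k+2$ blocks (the blocks of $\hat 0$ and $\hat 1$ are distinct since $\hat 0 <_{\hat P} \hat 1$). The resulting point lies in $F_\pi$, so it is nonempty. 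Since the facet-defining inequalities of $\mathcal{O}(P)$ can all be written as $x_a \le x_b$ for a cover $a \prec b$ in $\hat P$, such an inequality is tight at this point iff $a,b$ lie in the same block. Hence $F_\pi$ equals the face containing the constructed point in its relative interior, and the $k$ free constants $c_1,\dots,c_k$ parameterize its affine span, so $\dim F_\pi = k$.

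To finish, I would verify the two constructions are mutually inverse. Starting from a face $F$ and forming $\pi_F$ via any interior point, the reverse construction recovers $F$ since the tight facet inequalities at the interior point are exactly those encoded by $\pi_F$. Starting from a CCP $\pi$, forming $F_\pi$ and reading off the partition of an interior point returns $\pi$: distinct blocks receive distinct constants (preventing accidental merging), while connectedness of each block prevents spurious splitting. The most delicate step, and the one I would spend the most care on, is showing that compatibility of $\pi$ is exactly what makes $F_\pi$ a nonempty face — equivalently, that acyclicity of the block relation is precisely what permits a topological sort into strictly increasing values in $(0,1)$; this is the step that pins down why the CCP axioms, rather than some weaker condition, give the right combinatorial encoding.
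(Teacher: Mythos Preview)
The paper does not give its own proof of this theorem; it is quoted from Stanley~\cite{stanley1986two} and used as a black box. So there is no comparison to make with the paper's argument. Your outline is essentially Stanley's original proof, and the strategy is correct.

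Two small points deserve tightening. First, your justification that the blocks of $\hat 0$ and $\hat 1$ are distinct (``since $\hat 0 <_{\hat P} \hat 1$'') is not quite the reason: comparable elements can certainly lie in the same block. The correct argument is that if $\hat 0$ and $\hat 1$ share a block $B$ and $B'$ is any other block containing some $p\in P$, then $\hat 0 \le p \le \hat 1$ forces $B \le_\pi B' \le_\pi B$, so compatibility gives $B=B'$; hence a CCP with at least two blocks must separate $\hat 0$ and $\hat 1$. Second, in the face-to-partition direction you should note that $\pi_F$ does not depend on the choice of relative interior point: any two such points satisfy exactly the same set of tight facet inequalities $x_a=x_b$ (for covers $a\prec b$ in $\hat P$), hence generate the same equivalence relation. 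With these two clarifications the argument goes through.
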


We turn to studying CCPs of $\crown_{2n}$ and $\hat \crown_{2n}$.
The CCPs of $\crown_{2n}$ are easy to recognize:

\begin{lem}\label{lemma:compatible_iff_at_least_one_odd_part}
    A connected partition of $\crown_{2n}$ with at least two blocks is compatible if and only if at least one block has odd cardinality.
\end{lem}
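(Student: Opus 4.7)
My plan exploits the fact that the Hasse diagram of $\crown_{2n}$ is a $2n$-cycle with alternating minimal (odd-indexed) and maximal (even-indexed) elements. Since $\crown_{2n}$ has height two, every strict relation is a cover, so a connected induced subposet is precisely a connected subgraph of this cycle; with $r \ge 2$ blocks, each block $B^{(j)}$ is therefore an arc of the cycle. I would list the blocks $B^{(1)}, \ldots, B^{(r)}$ in cyclic order, separated by $r$ cover relations that I will call the \emph{cuts}. For each cut between $B^{(j)}$ and $B^{(j+1)}$ (indices mod $r$), I record a sign $\epsilon_j = +1$ if the minimum endpoint of the cut lies in $B^{(j)}$ and $\epsilon_j = -1$ otherwise; equivalently, $B^{(j)} \le_\pi B^{(j+1)}$ iff $\epsilon_j = +1$ and $B^{(j+1)} \le_\pi B^{(j)}$ iff $\epsilon_j = -1$. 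Since any nontrivial relation between distinct blocks in $\le_\pi$ must come from a cut (the only strict relations in $\crown_{2n}$ are covers), these signs completely encode the block digraph.

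The heart of the proof would then be the assertion that $\pi$ is incompatible if and only if all the $\epsilon_j$ share a common sign. The ``if'' direction is immediate: when all $\epsilon_j = +1$, the chain $B^{(1)} \le_\pi B^{(2)} \le_\pi \cdots \le_\pi B^{(r)} \le_\pi B^{(1)}$ is a non-trivial cycle, and the $-1$ case is symmetric. For the converse, I would observe that a block $B^{(j)}$ is a \emph{source} of the block digraph (all incident edges outgoing) exactly when $\epsilon_{j-1} = -1$ and $\epsilon_j = +1$, and a \emph{sink} exactly when $\epsilon_{j-1} = +1$ and $\epsilon_j = -1$; thus whenever the sign sequence is mixed, at least one source or sink exists. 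Since edges of the block digraph occur only between cyclically adjacent blocks, with exactly one edge per adjacent pair when $r \ge 3$, any non-trivial directed cycle must traverse the blocks monotonically in cyclic order and therefore cannot contain a source or sink. The small case $r = 2$, where two cuts connect the same pair, needs a separate direct check that $\epsilon_1 = \epsilon_2$ yields a length-two cycle and $\epsilon_1 \ne \epsilon_2$ does not. This block-digraph analysis is where the main subtlety lies, though it is ultimately elementary.

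Finally, I would translate the common-sign condition into the parity condition demanded by the lemma. Since parities strictly alternate along the Hasse cycle, the two endpoints of an arc share the same parity if and only if the arc has odd length. If all $\epsilon_j = +1$, then every block has an odd right endpoint and an even left endpoint, forcing its length to be even; the $\epsilon_j = -1$ case is symmetric. Conversely, if every block has even length, the parities of the left endpoints propagate consistently across cuts (each cut joins an odd vertex to an even one), forcing all the $\epsilon_j$ to agree. Combining with the previous paragraph yields: $\pi$ is incompatible if and only if every block has even cardinality, which is the contrapositive of the lemma.
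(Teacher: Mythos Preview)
Your proposal is correct and follows essentially the same approach as the paper: both arguments recognise that the blocks are arcs of the $2n$-cycle, that the only relations in $\le_\pi$ between distinct blocks come from the cut edges, and that an odd block has both endpoints of the same type (both minimal or both maximal) and hence acts as a source or sink in the block digraph, obstructing any directed cycle. Your sign sequence $\epsilon_j$ is a tidy bookkeeping device for this, and your separate treatment of $r=2$ mirrors the paper's implicit handling; the paper phrases the same content slightly differently by first observing that any non-trivial cycle must contain every block and then checking the two shapes an odd block can have, but the substance is the same.
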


\begin{proof}
    First, suppose $\pi$ is a connected partition of $\crown_{2n}$.
    Write the elements of $\crown_{2n}$ as
    \begin{equation*}
        a_1 < b_1 > a_2 < b_2 > \cdots < b_{n} > a_1. \tag{$\ast$}
    \end{equation*}
    Since each block of $\pi$ is connected, each block is going to consist of (cyclically) consecutive elements in the list $(\ast)$.
    If $B \le B'$ for some $B,B' \in \pi$ and $B \neq B'$ then there needs to be elements $a \in B$ and $b \in B'$ that are (cyclically) consecutive in the list $(\ast)$.
    Hence the only way to have $B \le B'$ for two distinct blocks is if the union $B \cup B'$ consists of (cyclically) consecutive elements.
    Thus, if there are any cycles among the blocks of $\pi$, any element in $\crown_{2n}$ is going to appear in one of those blocks.
    Therefore, any cycle among the blocks of $\pi$ is going to contain all blocks.
    
    With this observation we now fix a connected partition $\pi = \{ B_1,\dots,B_k \}$ of $\crown_{2n}$ where $k \ge 2$ and proceed to prove the lemma.
    
    Suppose $\pi$ has an odd block $B_i$.
    Either $B_i$ is of the form
    \begin{equation*}
        B_i = \{ a_{i_1} < b_{i_2} > \cdots < b_{i_m} > a_{i_{m+1}} \}
    \end{equation*}
    or $B_i$ is of the form
    \begin{equation*}
        B_i = \{ b_{i_1} > a_{i_2} < \cdots > a_{i_m} < b_{i_{m+1}} \}.
    \end{equation*}
    In the first case there cannot be any block $B_j \neq B_i$ with $B_j \le B_i$.
    In the second case there cannot be any block $B_j \neq B_i$ with $B_i \le B_j$.
    Hence $B_i$ cannot appear in any non-trivial cycle.
    We conclude that $\pi$ is compatible.
    
    Then suppose every block of $\pi$ has even cardinality.
    We aim to show that $\pi$ is not compatible.
    Now either each $B_i$ is of the form
    \begin{equation*}
        B_i = \{ a_{i_1} < b_{i_2} > \cdots > a_{i_{m}} < b_{i_{m+1}}  \}
    \end{equation*}
    or each $B_i$ is of the form
    \begin{equation*}
        B_i = \{ b_{i_1}  > a_{i_2} < \cdots < b_{i_m} > a_{i_{m+1}} \}.
    \end{equation*}
    Hence we can chain together all the blocks of $\pi$ to form a cycle $B_1 \le B_2 \le \dots \le B_k \le B_1$.
    Since $k \ge 2$, we conclude that $\pi$ is not compatible.
\end{proof}

The comparability graph of $\crown_{2n}$ is the cycle graph on $2n$ elements.
By Lemma~\ref{lemma:compatible_iff_at_least_one_odd_part}, the CCPs of $\crown_{2n}$ therefore correspond to connected partitions of the vertex set of the cycle graph where at least one part has odd cardinality.
Counting such partitions seems combinatorially feasible.
However, the faces of $\mathcal{O}(\crown_{2n})$ do not correspond to CCPs of $\crown_{2n}$ but instead to the CCPs of $\hat \crown_{2n} = \crown_{2n} \cup \{ \hat 0 , \hat 1 \}$. The next proposition shows how to obtain the CCPs of $\hat \crown_{2n}$ from the CCPs of $\crown_{2n}$.

\begin{prop}\label{prop:k-CCP's_and_pairs_(P,S)}
    For all $k \ge 2$ let
    \begin{equation*}
        \mathcal{A}_k = \{ (P,S) \colon P \text{ is a  CCP of } \crown_{2n}, \ S \subseteq \{ \text{odd parts of  } P  \}, \ |P| - |S| = k -2  \}.
    \end{equation*}
    \begin{enumerate}[(i)]
        \item For all $k \geq 2$ there is an injection $\mathcal{A}_k \longrightarrow \{ \text{$k$-CCP's of } \hat \crown_{2n} \}$.
        \item For all $k \geq 3$ the injection is also surjective and thus 
        \begin{equation*}
            \# \{ \text{$k$-CCP's of } \hat \crown_{2n} \} = |\mathcal{A}_k|.
        \end{equation*}
        \item For $k = 2$ the injection misses two elements and thus 
        \begin{equation*}
            \# \{ \text{2-CCP's of } \hat \crown_{2n} \} = |\mathcal{A}_2| + 2.    
        \end{equation*}
    \end{enumerate}
\end{prop}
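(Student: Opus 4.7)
The plan is to define an explicit map $\phi \colon \mathcal{A}_k \to \{k\text{-CCPs of }\hat\crown_{2n}\}$ that absorbs the odd parts selected by $S$ into blocks containing $\hat 0$ and $\hat 1$, and then to track precisely when $\phi$ fails to be surjective. The proof of Lemma~\ref{lemma:compatible_iff_at_least_one_odd_part} shows that each odd block of a connected partition of $\crown_{2n}$ is either of \emph{Type~1}, with both endpoints at minimal elements, or of \emph{Type~2}, with both endpoints at maximal elements. Given $(P,S) \in \mathcal{A}_k$, I would define $\phi(P,S)$ as the partition of $\hat\crown_{2n}$ that leaves $P \setminus S$ unchanged, merges all Type~1 parts in $S$ together with $\hat 0$ into a single block $B_{\hat 0}$, and merges all Type~2 parts in $S$ together with $\hat 1$ into a single block $B_{\hat 1}$. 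The new partition has $|P|-|S|+2 = k$ blocks.

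The key technical step will be a \emph{source-sink dichotomy}: for a Type~1 odd part $B$, every maximal element of $B$ has both of its crown-neighbors already inside $B$, and hence $B' \not\le B$ in $\crown_{2n}$ for every block $B' \neq B$; symmetrically, for a Type~2 odd part $B$ we have $B \not\le B'$ for every $B' \neq B$. Using this, I would check that $\phi(P,S)$ is a CCP of $\hat\crown_{2n}$: connectedness of each block is immediate because $\hat 0$ and $\hat 1$ are comparable to everything, and the dichotomy implies that $B_{\hat 0}$ can never be entered nor $B_{\hat 1}$ ever left in a non-trivial cycle of blocks, so any such cycle would consist only of blocks of $P \setminus S$ and would already be a cycle in $P$, contradicting that $P$ is compatible.

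For injectivity, given $Q$ in the image one recovers $P$ as the blocks of $Q$ not containing $\hat 0,\hat 1$ together with the connected components (in $\crown_{2n}$) of $B_{\hat 0}\setminus\{\hat 0\}$ and $B_{\hat 1}\setminus\{\hat 1\}$, and $S$ as those components. To prove surjectivity for $k \ge 3$, I would first show that in any $k$-CCP $Q$ of $\hat\crown_{2n}$, every connected component $C$ of $B_{\hat 0}\setminus\{\hat 0\}$ in $\crown_{2n}$ must be a Type~1 odd part (and symmetrically for $B_{\hat 1}$). If $C$ had a maximal element as an endpoint, the outside crown-neighbor of that endpoint would lie in some block $B'$, yielding $B' \le B_{\hat 0}$ via the cover relation and $B_{\hat 0} \le B'$ via $\hat 0$, a forbidden 2-cycle; the only escape is $C = \crown_{2n}$, which forces $k = 2$. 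Granting these structural facts, the inverse construction produces a genuine pair $(P,S)$ and $P$ is compatible either directly via the source-sink dichotomy or by invoking Lemma~\ref{lemma:compatible_iff_at_least_one_odd_part}.

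For $k = 2$ the same analysis applies except for the two degenerate partitions $\{\{\hat 0\},\hat\crown_{2n}\setminus\{\hat 0\}\}$ and $\{\hat\crown_{2n}\setminus\{\hat 1\},\{\hat 1\}\}$, which arise precisely from the escape $C = \crown_{2n}$ (in the block of $\hat 1$ or of $\hat 0$, respectively). Neither lies in the image of $\phi$: producing either via $\phi$ would require $\crown_{2n}$ itself to appear as an odd part in $S$, which is impossible since $|\crown_{2n}| = 2n$ is even. These are the two missing elements in (iii). I expect the hardest part of the write-up to be organizing the cycle case-analysis cleanly; once the source-sink dichotomy is in place every case collapses quickly.
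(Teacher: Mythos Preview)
Your proposal is correct and follows essentially the same route as the paper. The map $\phi$ you describe is exactly the paper's map (the paper calls your Type~1 and Type~2 parts the sets $S_0$ and $S_1$ of blocks with more lower, respectively more upper, elements), your source--sink dichotomy is the paper's observation that ``no part of $P$ can be below any $B_i$, and no part of $P$ can be above any $D_i$'', and your identification of the two exceptional $2$-CCPs coincides with the paper's Figure~\ref{fig:forbidden_partitions}. The only point you gloss over that the paper makes explicit is why the connected components of $B_{\hat 0}\setminus\{\hat 0\}$ recover the original Type~1 parts of $S$ exactly (equivalently, why the union of two or more Type~1 parts is disconnected in $\crown_{2n}$); this follows because any connected subset of $\crown_{2n}$ has $|\#\text{lower}-\#\text{upper}|\le 1$, while a union of $m\ge 2$ Type~1 parts has $m$ more lower than upper elements.
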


\begin{figure}[t]
    \centering
    \includegraphics[scale=0.5]{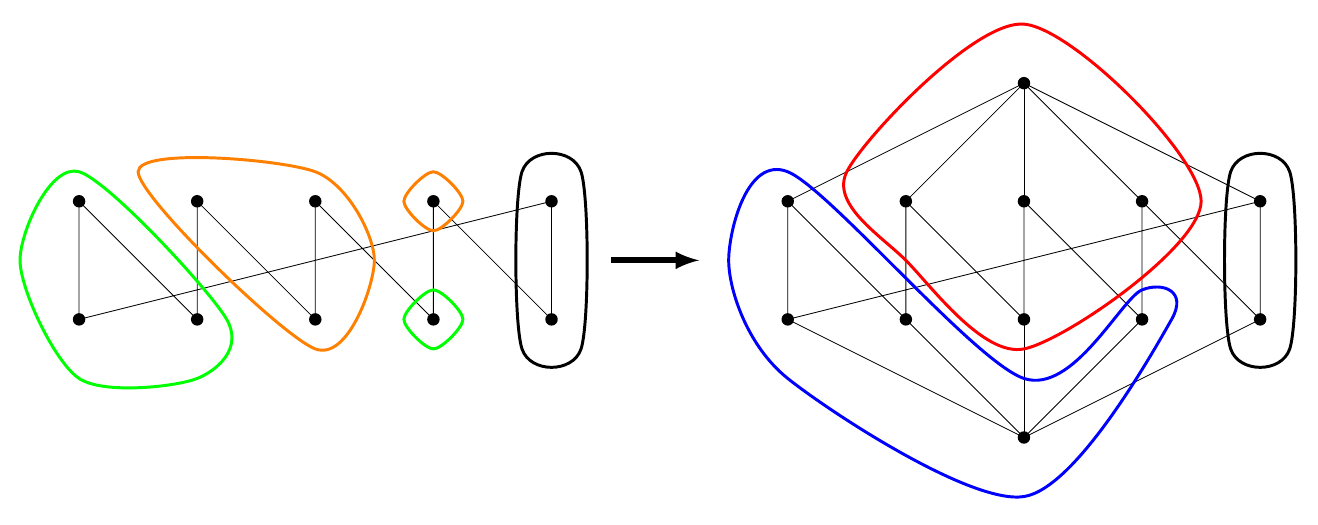}
    \caption{
        The map from the proof of Proposition~\ref{prop:k-CCP's_and_pairs_(P,S)}.
        On the left we have a CCP of $\crown_{2n}$ which then maps to a CCP of $\hat \crown_{2n}$ on the right.
        Using the notation from the proof, on the left the green parts are $B_1,\dots,B_m$, the orange parts are $D_1,\dots,D_\ell$, $S$ is the set of all odd parts in $P$ and the black part is the only part in $P \setminus S$.
        On the right the blue part is $B_1 \cup \dots \cup B_m \cup \{ \hat 0 \}$ and the red part is $D_1 \cup \dots \cup D_{\ell} \cup \{ \hat 1 \}$.
        Here $n = 5$, $k = 3$, $m = 2$, $\ell = 2$.
        Note that here $S$ is the set of \emph{all} odd parts of $P$ but this is not always the case.}
    \label{fig:partition_map}
\end{figure}

\begin{proof}
    Let $k \ge 2$.
    Given a pair $(P,S) \in \mathcal{A}_k$ we let $S_0 = \{ B_1, \dots, B_m \} \subseteq S$ be the set of parts in $S$ that have more lower elements than upper elements and $S_1 = \{ D_1,\dots,D_{\ell} \} \subseteq S$ be the set of parts that have more upper elements than lower elements.
    Here $S = S_0 \sqcup S_1$.
    We map $(P,S)$ to 
    \begin{equation*}
        \pi = (P \setminus S) \cup \{ B_1 \cup \dots \cup B_m \cup \{ \hat 0 \} \} \cup \{ D_1 \cup \dots \cup D_\ell \cup \{ \hat 1 \} \},
    \end{equation*}
    see Figure~\ref{fig:partition_map}.
    Our aim is to show that this mapping is a well-defined injection $\mathcal{A}_k \to \{ \text{$k$-CCP's of } \hat \crown_{2n}  \}.$
    First we show that this map is well-defined.
    Use the same notation as above.
    Our aim is to show that $\pi$ is a $k$-CCP of $\hat \crown_{2n}$.
    Since $P$ is a partition of $\crown_{2n}$, it is easy to see that $\pi$ is a partition of $\hat \crown_{2n}$.
    The parts in $P \setminus S$ are connected since $P$ was connected, and the new parts $B_1 \cup \dots \cup B_m \cup \{ \hat 0 \}$ and $D_1 \cup \dots \cup D_\ell \cup \{ \hat 1 \}$ are clearly connected.
    Here $|\pi| = |P| - |S| + 2 = k$.
    It remains to show that $\pi$ is compatible.
    Since each $B_i$ is connected and has more lower elements than upper elements, each $B_i$ looks something like in Figure~\ref{fig:blocks_a}.
    Similarly, since each $D_i$ is connected and has more upper elements than lower elements, each $D_i$ looks something like in Figure~\ref{fig:blocks_b}.
    In particular, no part of $P$ can be below any $B_i$, and no part of $P$ can be above any $D_i$.
    Write $P \setminus S = \{ C_1, \dots, C_r \}$.
    Furthermore, write $B = B_1 \cup \dots \cup B_m \cup \{ \hat 0 \}$ and $D = D_1 \cup \dots \cup D_{\ell} \cup \{ \hat 1 \}$.
    Thus $\pi = \{ C_1, \dots, C_r,B,D \}$.
    Since $P$ is a compatible partition of $\crown_{2n}$, there cannot be any cycles among $C_1, \dots, C_r$.
    Hence any cycle among the parts of $\pi$ is going to either contain $B$ or $D$, or both.
    If we have $C_i \le B$ for any $i$ then $C_i \le B_j$ for some $j$, which is not possible as we saw.
    Similarly, if $D \le C_i$ for any $i$ then $D_j \le C_i$ for some $j$, which is also not possible.
    We also cannot have $D \le B$.
    Therefore the only possible cycles among the parts of $\pi$ have the forms
    \begin{equation*}
        B \le C_\bullet \le \dots \le C_\bullet  \qquad
        C_\bullet \le \dots \le C_\bullet \le D  \qquad
        B \le C_\bullet \le \dots \le C_\bullet \le D,
    \end{equation*}
    where $\bullet$ ranges over some (possibly empty) subset of the indices $\{ 1,\dots,r \}$.
    But none of these are cycles.
    We conclude that $\pi$ is compatible and the map is well-defined.

    \begin{figure}
    \begin{subfigure}{.5\textwidth}
        \centering
        \includegraphics[width=1\linewidth]{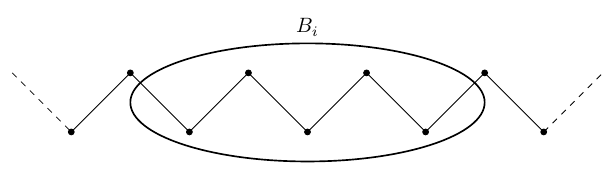}
        \caption{What $B_i$ looks like.}
        \label{fig:blocks_a}
    \end{subfigure}
    \begin{subfigure}{.5\textwidth}
        \centering
        \includegraphics[width=1\linewidth]{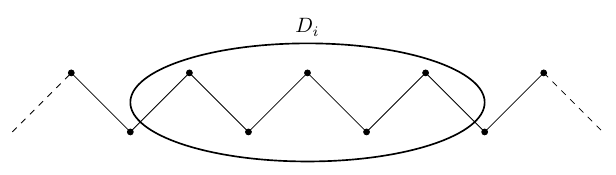}
        \caption{What $D_i$ looks like.}
        \label{fig:blocks_b}
    \end{subfigure}
    \caption{Blocks from the proof of Proposition~\ref{prop:k-CCP's_and_pairs_(P,S)}.}
    \label{fig:blocks}
\end{figure}
    
    Next, we show injectivity.
    Let $(P,S) \in \mathcal{A}_k$ be as above and use the same notation as before.
    Let $(P',S') \in \mathcal{A}_k$ be another pair and use similar notation.
    The pair $(P,S)$ gives rise to a partition $\pi = \{ C_1, \dots, C_r,B,D \}$ and the pair $(P',S')$ gives rise to a partition $\pi' = \{ C'_1,\dots,C_{r'}', B',D' \}$.
    We assume $\pi = \pi'$ and aim to show $(P,S) = (P',S')$.
    For all $i=1,\dots,m$  we have $C_i \not\in \{ B',D' \}$ since $\hat 0 \in B'$ and $\hat 1 \in D'$.
    Therefore $C_i \in \{ C'_1,\dots,C'_{r'} \}$.
    Thus $\{ C_1, \dots, C_r \} \subseteq \{ C'_1,\dots,C'_{r'} \}$.
    Similarly, $\{ C'_1,\dots,C'_{r'} \} \subseteq \{ C_1, \dots, C_r \}$.
    Therefore
    \begin{equation*}
        P \setminus S = \{ C_1, \dots, C_r \} = \{ C'_1,\dots,C'_{r'} \} = P' \setminus S'.
    \end{equation*}
    We have $B \in \pi'$ but we cannot have $B \in \{ C'_1, \dots, C'_{r'},D'  \}$ as $\hat 0 \in B$.
    Thus $B = B'$, that is,
    \begin{equation*}
        B_1 \cup \dots \cup B_m \cup \{ \hat 0 \} = B'_1 \cup \dots \cup B'_{m'} \cup \{ \hat 0 \}
    \end{equation*}
    and therefore $B_1 \cup \dots \cup B_m = B'_1 \cup \dots \cup B'_{m'}$.
    Consider any $B_i \in \{ B_1, \dots, B_m \}$.
    Now $B_i \subseteq B'_1 \cup \dots \cup B'_{m'}$.
    \begin{claim}
        The union of any two or more parts $B'_1,\dots,B'_{m'}$ is disconnected.
    \end{claim}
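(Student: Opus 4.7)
The plan is to leverage the fact, recalled just before the proposition, that the comparability graph of $\crown_{2n}$ is a $2n$-cycle in which the minimal elements $a_i$ and the maximal elements $b_i$ strictly alternate. Each $B'_i$ is, by construction, a connected subposet of $\crown_{2n}$, and connectedness forces $B'_i$ to be a cyclic arc --- a set of consecutive vertices --- in this cycle. Because $B'_i$ has odd cardinality (since it lies in $S'$) and contains strictly more $a$-type than $b$-type elements (since it lies in $S'_0$, the analog for $(P',S')$ of the set $S_0$ defined earlier in the proof), the alternation along the arc forces both endpoints of $B'_i$ to be $a$-type elements by a parity argument.

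Next I would invoke a simple observation about the cycle graph: any two disjoint nonempty arcs in a cycle have connected union if and only if an endpoint of one is cycle-adjacent to an endpoint of the other, so that the two arcs merge into a single longer arc. But every endpoint of every $B'_i$ is an $a$, while the cycle-neighbors of any $a$ in $\crown_{2n}$ are $b$'s. Hence no two distinct $B'_i$'s can be joined. Iterating, the union of any $s \ge 2$ of the parts $B'_{i_1},\dots,B'_{i_s}$ decomposes as a disjoint union of $s$ nonempty arcs of the cycle, so its induced subgraph has $s \ge 2$ connected components and is therefore disconnected.

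The main conceptual step --- and probably the only one that requires real care --- is the parity argument showing that both endpoints of each $B'_i$ must be minima: this combines the odd-cardinality condition coming from membership in $S'$ with the minima-heavy condition defining $S'_0$. Once that is pinned down, the remainder of the proof reduces to the elementary fact that two disjoint arcs of a cycle can be merged only via cycle-adjacent endpoints, which cannot occur here because adjacent vertices in our cycle always have opposite types.
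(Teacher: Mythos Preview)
Your argument is correct. You identify each $B'_i$ as a proper arc of the $2n$-cycle (it cannot be the whole cycle, since then it would have equally many $a$'s and $b$'s), observe that the minima-heavy condition forces both endpoints to be $a$'s, and then note that distinct arcs with $a$-endpoints can never be merged because cycle-neighbors of any $a$ are $b$'s. This is sound; one small redundancy is that you do not actually need the odd-cardinality hypothesis separately --- for a proper arc, ``more $a$'s than $b$'s'' already forces both endpoints to be $a$'s, and odd cardinality follows.

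The paper's proof is different and a bit slicker: it uses only the numerical invariant $\#(\text{lower elements}) - \#(\text{upper elements})$. For any connected subset of $\crown_{2n}$ this difference lies in $\{-1,0,1\}$, while each $B'_j$ contributes $+1$ to it; hence a union of $s \ge 2$ of them has difference $s \ge 2$ and cannot be connected. Your approach is more structural and in fact proves more (the union has exactly $s$ components), whereas the paper's counting argument is a one-liner that avoids any discussion of endpoints or adjacency.
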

    \begin{proof}[Proof of claim]
        In any connected subset of $\crown_{2n}$ the difference between the number of upper elements and lower elements is either 0 or 1.
        Each $B'_j$ has one more lower element than upper elements.
        Hence in any union of two or more parts $B'_1,\dots,B'_{m'}$ the difference between the number of lower elements and upper elements is at least 2 and hence such a union cannot be connected.
    \end{proof}
    Since $B_i$ is connected and is contained in the union $B'_1 \cup \dots \cup B'_{m'}$ we need to have $B_i \subseteq B'_j$ for some $j \in [m']$.
    By symmetry, $B'_j$ is contained in some  $B_1,\dots,B_m$, say $B'_j \subseteq B_t$.
    Now $B_i \subseteq B'_j \subseteq B_t$.
    Since $B_1, \dots, B_m$ are pairwise disjoint, we need to have $B_i = B_t$ and hence $B_i = B'_j$.
    We have now seen that $\{ B_1, \dots, B_m \} \subseteq \{ B'_1,\dots,B'_{m'} \}$.
    By symmetry, $\{ B'_1,\dots,B'_{m'} \} \subseteq \{ B_1, \dots, B_m \}$.
    Therefore $\{ B_1, \dots, B_m \} = \{ B'_1,\dots,B'_{m'} \}$.
    A similar argument shows that $\{ D_1,\dots,D_{\ell} \} = \{ D'_1,\dots,D'_{\ell'} \}$.
    Therefore
    \begin{equation*}
        S = \{ B_1, \dots, B_m \} \cup \{ D_1,\dots,D_{\ell} \} = \{ B'_1,\dots,B'_{m'} \} \cup \{ D'_1,\dots,D'_{\ell'} \} = S'.
    \end{equation*}
    Combining this with $P \setminus S = P' \setminus S'$ we obtain $(P,S) = (P',S')$ as desired.
    We have now shown that the map is injective, proving part (i).
    
    We now turn to parts (ii) and (iii).
    We still let $k \ge 2$.
    Let
    \begin{equation*}
        \pi = \{ E_1,\dots,E_{k-2},E_{\hat 0},E_{\hat 1} \}
    \end{equation*}
    be a $k$-CCP of $\hat \crown_{2n}$ where $E_{\hat 0}$ is the part containing $\hat 0$ and $E_{\hat 1}$ is the part containing $\hat 1$.
    Note that $\hat 0$ and $\hat 1$ cannot be in the same part: otherwise, if $\hat 0, \hat 1 \in E_i$ and $E_j$ is any other part then we would have $E_i \le E_j \le E_i$ which is not possible since $\pi$ is compatible.
    Furthermore, we assume that $\pi$ is neither of the partitions shown in Figure~\ref{fig:forbidden_partitions}.
     \begin{figure}
        \begin{subfigure}{.5\textwidth}
            \centering
            \includegraphics[width=0.7\linewidth]{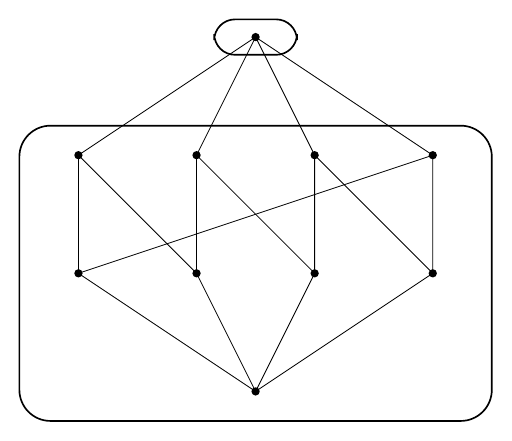}
            \caption{}
            \label{fig:forbidden_a}
        \end{subfigure}
        \begin{subfigure}{.5\textwidth}
            \centering
            \includegraphics[width=0.7\linewidth]{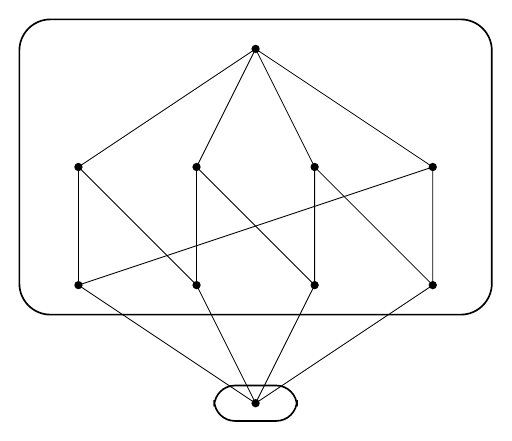}
            \caption{}
            \label{fig:forbidden_b}
        \end{subfigure}
        \caption{The two partitions we assume $\pi$ is not equal to in the proof of Proposition~\ref{prop:k-CCP's_and_pairs_(P,S)}. Here $n =4$.}
        \label{fig:forbidden_partitions}
    \end{figure}
    We think of $E_{\hat 0} \setminus \{ \hat 0 \}$ as an induced subgraph of $\crown_{2n}$ and we let $K_1, \dots, K_s$ be its connected components.
    Similarly, we think of $E_{\hat 1} \setminus \{ \hat 1 \}$ as an induced subgraph of $\crown_{2n}$ and we let $H_1, \dots, H_p$ be its connected components, see Figure~\ref{fig:parition_map_inverse}.
    We let
    \begin{align*}
        P &= \{ E_1,\dots,E_{k-2},K_1,\dots,K_s,H_1,\dots,H_p \} \smash{\text{\quad\raisebox{-0.5\baselineskip}{.}}}\\
        S &= \{ K_1, \dots, K_s,H_1, \dots, H_p \}
    \end{align*}
    Here $S \subseteq P$ and $|P| - |S| = k-2$.
    To show that the pair $(P,S)$ is in the domain of our mapping, we need to show that
    \begin{enumerate}[(1)]
        \item $P$ is a CCP of $\crown_{2n}$, and
        \item every part in $S$ has odd cardinality.
    \end{enumerate}
    It is clear that $P$ is a connected partition of $\crown_{2n}$.
    The next claim helps us show that $|K_i|$ is odd for all $i=1,\dots,s$.
    \begin{claim}
        For all upper elements $x$ and all $i=1,\dots,s$, if $x \in K_i$ then $K_i$ contains both lower elements below $x$.
    \end{claim}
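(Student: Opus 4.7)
The plan is to argue by contradiction from compatibility of $\pi$. Fix an upper element $x \in K_i$ and let $y$ be either of the two lower elements of $\crown_{2n}$ with $y \prec x$; the goal is to show $y \in K_i$. I would split into two cases depending on whether $y$ lies in $E_{\hat 0}$ or not.

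If $y \in E_{\hat 0}$, then $y$ lies in some connected component $K_{i'}$ of the induced subgraph $E_{\hat 0} \setminus \{\hat 0\}$ of $\crown_{2n}$. Since $y \prec x$ is a cover relation of $\crown_{2n}$, the elements $y$ and $x$ are adjacent in the comparability graph, and hence lie in the same connected component of $E_{\hat 0} \setminus \{\hat 0\}$. Therefore $K_{i'} = K_i$, which gives $y \in K_i$.

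If $y \notin E_{\hat 0}$, let $E \in \pi$ be the part containing $y$. The cover relation $y \prec x$ together with $x \in E_{\hat 0}$ gives $E \le_\pi E_{\hat 0}$; on the other hand, $\hat 0 \prec y$ in $\hat\crown_{2n}$ (since $\hat 0$ is the minimum and $y \ne \hat 0$) together with $\hat 0 \in E_{\hat 0}$ gives $E_{\hat 0} \le_\pi E$. Because $E \ne E_{\hat 0}$, this is a non-trivial cycle among the blocks of $\pi$, contradicting compatibility. Applying this to both lower elements below $x$ finishes the proof.

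I do not expect any serious obstacle. The only point one might overlook is that, even when $y$ sits in the same block $E_{\hat 0}$ as $x$, it could a priori lie in a different connected component $K_{i'}$ than $x$; this is immediately ruled out because the cover relation $y \prec x$ is itself an edge of the comparability graph. The forbidden partitions of Figure~\ref{fig:forbidden_partitions} play no role in this particular claim, since the compatibility argument applies uniformly.
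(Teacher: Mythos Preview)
Your proof is correct and essentially identical to the paper's: both rule out $y$ lying in a different component $K_{i'}$ via the adjacency coming from the cover relation $y \prec x$, and then derive the cycle $E \le_\pi E_{\hat 0} \le_\pi E$ from $y \prec x$ and $\hat 0 < y$ to exclude $y \notin E_{\hat 0}$. The only cosmetic difference is that the paper packages this as a single contradiction starting from $y \notin K_i$, whereas you split into two cases.
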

    \begin{proof}[Proof of claim]
        Suppose towards contradiction that $y$ is a lower element with $y < x$ and $y \not\in K_i$.
        Here $y$ cannot be in any of the $K_1, \dots, K_s$ since otherwise it would be in the same connected component as $x$ and hence in $K_i$.
        Therefore $y \not\in E_{\hat 0}$.
        Let $E_i$ be the part of $\pi$ that contains $y$.
        Here $E_i \neq E_{\hat 0}$.
        But now we have $E_i \le E_{\hat 0} \le E_i$, which contradicts the fact that $\pi$ is compatible.
        We conclude that $y \in K_i$.
    \end{proof}
    Hence whenever any $K_i$ contains and upper element, it contains both of the lower elements below it.
    We cannot have $K_i = \crown_{2n}$ for any $i$ as otherwise $E_{\hat 0} = \crown_{2n} \cup \{ \hat 0 \}$ and $E_{\hat 1} = \{ \hat 1 \}$, which would contradict our assumption about $\pi$.
    Thus each $K_i$ is going to have more lower elements than upper elements, and since each $K_i$ is connected, $|K_i|$ has to be odd for all $i$.
    A similar claim shows that whenever any $H_i$ contains any lower element $y$ then $H_i$ needs to contain both upper elements above $y$.
    Similarly as before, we cannot have $H_i = \crown_{2n}$ for any $i$ as otherwise $E_{\hat 1} = \crown_{2n} \cup \{ \hat 1 \}$ and $E_{\hat 0} = \{ \hat 0 \}$ contrary to our assumption about $\pi$.
    Thus each $H_i$ is going to have more upper elements than lower elements, and since each $H_i$ is connected,  $|H_i|$ has to be odd for all $i$.
    We have now shown (2).
    To see that we have (1) it remains to see that $P$ is a compatible partition of $\crown_{2n}$.
    If $S \neq \emptyset$ then $P$ has an odd part, and since $P$ has at least two parts, by Lemma~\ref{lemma:compatible_iff_at_least_one_odd_part} the partition $P$ is a compatible partition of $\crown_{2n}$.
    If $S = \emptyset$ then $P = \{ E_1,\dots,E_{k-2} \}$ and $P$ is a CCP of $\crown_{2n}$, and in particular $P$ is compatible.
    We have now shown (1).
    Therefore $(P,S) \in \mathcal{A}_k$.
    Recall that $K_1, \dots, K_s$ have more lower elements than upper elements and $H_1, \dots, H_p$ have more upper elements than lower elements.
    It is now easy to see that $(P,S)$ maps to $\pi$.
    We have now seen that when $k \ge 3$ the map is surjective, proving part (ii).
    
    \begin{figure}[t]
        \centering
        \includegraphics[scale=0.6]{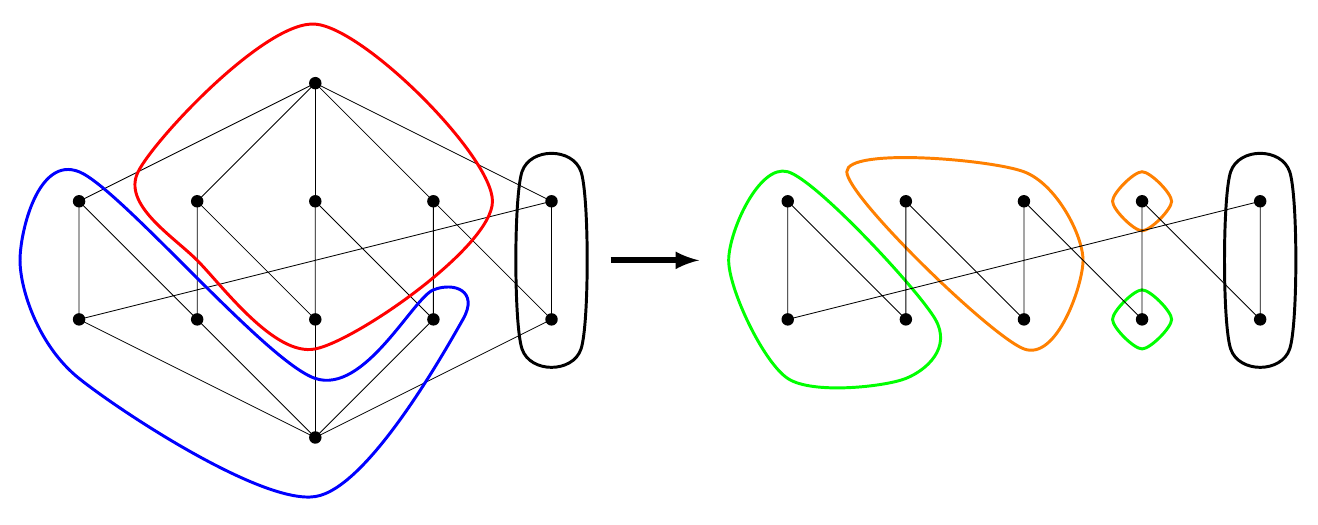}
        \caption{
            The inverse of the map from the proof of Proposition~\ref{prop:k-CCP's_and_pairs_(P,S)}.
            On the left we have a CCP of $\hat \crown_{2n}$ and on the right a corresponding CCP of $\crown_{2n}$.
            Using the notation from the proof, the blue part is $E_{\hat 0}$, the red part is $E_{\hat 1}$, the green parts are $K_1,\dots,K_s$, the orange parts are $H_1, \dots, H_p$ and the black parts are $E_1,\dots,E_{k-2}$.
            Here $n = 5$, $k = 3$, $s = 2$, $p = 2$.
         }
        \label{fig:parition_map_inverse}
    \end{figure}

    Finally, let $k = 2$.
    We claim that no pair $(P,S)$ can map to either of the partitions shown in Figure~\ref{fig:forbidden_partitions}.
    Suppose towards contradiction that $(P,S) \in \mathcal{A}_2$ maps to $\{ \crown_{2n} \cup \{ \hat 1 \}, \{ \hat 0 \} \}$.
    Thus
    \begin{equation*}
        (P \setminus S) \cup \{ B_1 \cup \dots \cup B_m \cup \{ \hat 0 \} \} \cup \{ D_1 \cup \dots \cup D_\ell \cup \{ \hat 1 \} \} = \{ \crown_{2n} \cup \{ \hat 1 \}, \{ \hat 0 \}\}
    \end{equation*}
    where $B_1, \dots, B_m$ are the parts in $S$ that have more lower elements than upper elements and $D_1,\dots,D_\ell$ are the parts in $S$ that have more upper elements than lower elements.
    But this implies $D_1 \cup \dots \cup D_m = \crown_{2n}$ and hence $\crown_{2n}$ would have more upper elements than lower elements, which is not the case.
    Hence no pair $(P,S)$ maps to the partition $\{ \crown_{2n} \cup \{ \hat 1 \}, \{ \hat 0 \} \}$.
    A similar argument shows that no pair $(P,S)$ maps to the partition $\{ \crown_{2n} \cup \{ \hat 0 \} , \{ \hat 1 \} \}$.
    Therefore, when $k = 2$ the map is an injective map 
    \begin{equation*}
        \mathcal{A}_2 \longrightarrow \{ 2\text{-CCPs of } \hat  \crown_{2n} \} \setminus \{ \text{the two partitions shown in Figure~\ref{fig:forbidden_partitions}} \}
    \end{equation*}
    and we saw that this map is also surjetive.
    We have now shown part (iii) finishing the proof.
\end{proof}

Next, we look at how to compute the cardinalities of the sets $\mathcal{A}_k$ for all $k \ge 2$.
Recall from Lemma~\ref{lemma:compatible_iff_at_least_one_odd_part} that if $P$ is a connected partition of $\crown_{2n}$ with at least two parts, then it is compatible if and only if it has at least one odd part.
By splitting into cases depending on the cardinality of $P$ and the number of odd parts $P$ has, we can write the set 
\begin{equation*}
    \mathcal{A}_k = \{ (P,S) \colon P \text{ is a  CCP of } \crown_{2n}, \ S \subseteq \{ \text{odd parts of  } P  \}, \ |P| - |S| = k -2  \}
\end{equation*}
as a disjoint union
\begin{equation*}
    \bigsqcup_{(i,j)} 
    \left\{ 
    (P,S) \ \vline
    \begin{array}{l}
        P \text{ is a connected partition of } \crown_{2n}, \  |P| = i, \ P \text{ has $j$ odd parts,} \\
        S \subseteq \{ \text{odd parts of } P \}, \ |S| = i-k+2
    \end{array}
    \right\} \tag{$\ast\ast$}
\end{equation*}
where
\begin{equation*}
    (i,j) \in \{ (1,0), (2,1), (2,2), (3,1), (3,2), (3,3), \dots \}.
\end{equation*}
We observe that if we have a connected partition of $\crown_{2n}$ with $i$ parts of which $j$  are odd, then 
\begin{align*}
    2n &= (2k_1 + 1) + \dots + (2k_j + 1) + 2m_1 + \dots + 2m_{i-j} \\
    &= 2(k_1 + \dots + k_j) + j + 2(m_1 + \dots + m_{i-j})
\end{align*} 
and hence $j$ is even.
A partition of $\crown_{2n}$ with $i$ parts of which $j$ are odd satisfies $j \le i \le 2n$.
Therefore in the disjoint union $(\ast\ast)$ the pairs $(i,j)$ can be set to range over the set
\begin{equation*}
    \{ (1,0), (2,2), (3,2), (4,2), (4,4), (5,2), (5,4),(6,2),(6,4),(6,6), \dots, (2n,2n) \},
\end{equation*}
which we can write as 
\begin{equation*}
    I_n \coloneqq \{ (1,0) \} \cup \{ (i,2m) \in \mathbb{Z} \times \mathbb{Z} \mid 2 \le i \le 2n , \ 1 \le m \le i/2\}.
\end{equation*}
We now turn to counting the cardinality of the disjoint union $(\ast\ast)$.
For any pair $(i,j) \in I_n$ we can first choose a connected partition $P$ of $\crown_{2n}$ with $i$ parts of which $j$ are odd, and then choose the subset $S$.
No matter what the choice of $P$ is, the number of ways of choosing $S$ is always the same, namely, $\binom{j}{i-k+2}$.
If we let $p_n(i,j)$ be the number of connected partitions of $\crown_{2n}$ with $i$ parts of which $j$ are odd, then the disjoint union $(\ast\ast)$, and hence $\mathcal{A}_k$, has cardinality
\begin{equation*}
    \sum_{(i,j) \in I_n} p_n(i,j) \binom{j}{i-k+2} = p_n(1,0)\binom{0}{1-k+2} + \sum_{i=2}^{2n} \sum_{m=1}^{\lfloor i/2 \rfloor} p_n(i,2m) \binom{2m}{i-k+2}.
\end{equation*}
Here we use the convention that $\binom{a}{b} = 0$ if $b < 0$. 
Let us record this as a separate result for future reference.
Note that $p_n(1,0) = 1$.

\begin{lem}\label{lemma:card_of_Ak}
    For all $k \ge 2$, 
    \begin{equation*}
        |\mathcal{A}_k| = \binom{0}{3-k} + \sum_{i=2}^{2n} \sum_{m=1}^{\lfloor i/2 \rfloor} p_n(i,2m) \binom{2m}{i-k+2}.
    \end{equation*}
\end{lem}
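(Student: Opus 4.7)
The plan is to carry out the counting argument by stratifying $\mathcal{A}_k$ according to two statistics on the first coordinate $P$: the number of blocks $i = |P|$ and the number of odd blocks $j$. First I would write $\mathcal{A}_k$ as the disjoint union over pairs $(i,j)$ of the sets
\begin{equation*}
    \mathcal{A}_k^{(i,j)} = \left\{ (P,S) \in \mathcal{A}_k \ \Big|\ |P|=i,\ P \text{ has exactly $j$ odd parts} \right\}.
\end{equation*}
This is well-defined because the constraint $|P| - |S| = k - 2$ is automatically fixed once $i$ is, and $S$ is constrained to be a subset of the odd parts.

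The next step is to pin down which pairs $(i,j)$ can occur. By Lemma \ref{lemma:compatible_iff_at_least_one_odd_part}, any admissible $P$ with $i \geq 2$ must have at least one odd part, so $j \geq 1$. A quick parity check forces $j$ to be even: if $P$ has $j$ odd parts (of sizes $2k_1{+}1,\dots,2k_j{+}1$) and $i-j$ even parts, then $2n$ equals their sum, which has the same parity as $j$, so $j$ must be even. Together with the degenerate case $(i,j) = (1,0)$ coming from the one-part partition of $\crown_{2n}$, the allowed index set becomes exactly
\begin{equation*}
    I_n = \{(1,0)\} \cup \{ (i,2m) : 2 \leq i \leq 2n,\ 1 \leq m \leq \lfloor i/2 \rfloor \},
\end{equation*}
matching the sum in the statement.

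It remains to count $|\mathcal{A}_k^{(i,j)}|$. For fixed $(i,j) \in I_n$, choosing $(P,S)$ amounts to first choosing $P$ (which can be done in $p_n(i,j)$ ways by definition) and then choosing $S$ as an arbitrary subset of the $j$ odd parts of $P$ having size $|S| = i - k + 2$, giving $\binom{j}{i-k+2}$ choices independent of the particular $P$. Summing over $I_n$, with the $(1,0)$ term contributing $p_n(1,0) \binom{0}{3-k} = \binom{0}{3-k}$ (since the only connected $1$-part partition is $\{\crown_{2n}\}$ itself), yields the claimed formula. The argument is essentially bookkeeping, so the only mild subtlety is the parity observation eliminating odd values of $j$ and the degenerate $(1,0)$ case; everything else is immediate from the definitions of $p_n(i,j)$ and $\mathcal{A}_k$.
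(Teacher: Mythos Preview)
Your proposal is correct and follows essentially the same approach as the paper: the paper likewise stratifies $\mathcal{A}_k$ by $(i,j) = (|P|, \#\text{odd parts of }P)$, uses the parity observation to restrict to even $j$, singles out the degenerate $(1,0)$ case, and then counts each stratum as $p_n(i,j)\binom{j}{i-k+2}$. The argument and the bookkeeping are identical.
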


Next, we compute the numbers $p_n(i,2m)$.
The comparability graph of $\crown_{2n}$ is the cycle graph with $2n$ nodes.
Denote this graph by $O_{2n}$.
The connected partitions of $\crown_{2n}$ are thus the same as the connected partitions of $O_{2n}$, that is, partitions of the vertex set of $O_{2n}$ such that each part induces a connected subgraph.
Hence, in order to compute the numbers $p_n(i,2m)$ we need to count how many connected partitions of $O_{2n}$ there are that have $i$ parts of which $2m$ are odd.

\begin{lem}\label{lemma:p_n(i,j)}
    For all $2 \le i \le 2n$ and $1 \le m \le i/2$,
    \begin{equation*}
        p_n(i,2m) = \frac{2n}{i} \binom{i}{2m} \binom{n+m-1}{i-1}.
    \end{equation*}
\end{lem}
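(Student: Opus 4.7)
The plan is to count connected partitions of the cycle graph $O_{2n}$ by parameterizing them as choices of ``cut edges'', then to linearize the problem by marking one distinguished cut edge so as to reduce the count to one of ordinary compositions with a prescribed number of odd parts.

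First I would observe that a connected subset of $O_{2n}$ is precisely an arc, so a connected partition of $O_{2n}$ into $i$ blocks is determined by choosing $i$ of the $2n$ edges of $O_{2n}$ to ``cut''. This already gives the total count $\binom{2n}{i}$ of connected $i$-partitions, but more importantly it turns the odd-block condition into a condition on block sizes along the cycle.

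Next I would apply a standard double-counting trick: count pairs $(P, e)$ where $P$ is a connected partition of $O_{2n}$ into $i$ blocks with exactly $2m$ odd blocks, and $e$ is one of the $i$ cut edges of $P$. Counting by $P$ first gives $i \cdot p_n(i,2m)$. Counting by $e$ first: there are $2n$ choices of $e$, and once $e$ is fixed the remaining $i-1$ cut edges split the $2n$ vertices (read linearly starting after $e$) into an ordered composition $(a_1,\dots,a_i)$ of $2n$ into $i$ positive parts, with the odd-block condition translating to ``exactly $2m$ of the $a_j$ are odd''.

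It remains to count such compositions. I would choose in $\binom{i}{2m}$ ways the positions of the odd parts, write the odd parts as $2b_j+1$ with $b_j \ge 0$ and the even parts as $2c_j$ with $c_j \ge 1$, and observe that the constraint $\sum a_j = 2n$ becomes
\[
b_1 + \cdots + b_{2m} + (c_1 - 1) + \cdots + (c_{i-2m} - 1) = n + m - i,
\]
a nonnegative integer equation in $i$ variables, with $\binom{n+m-1}{i-1}$ solutions by stars and bars. Combining,
\[
i \cdot p_n(i,2m) = 2n \binom{i}{2m}\binom{n+m-1}{i-1},
\]
which gives the claimed formula.

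The main thing to be careful about is the linearization step: one must verify that the map (partition, distinguished cut edge) $\mapsto$ (composition read cyclically from that edge) is a bijection onto all ordered compositions of $2n$ into $i$ positive parts, with no overcounting, and that the odd/even block statistic is preserved; everything else is routine bookkeeping with binomial coefficients.
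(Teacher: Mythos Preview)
Your proposal is correct and follows essentially the same approach as the paper: both linearize the cycle via the double-counting of pairs (partition, marked cut edge), reducing the problem to counting compositions of $2n$ into $i$ positive parts with exactly $2m$ odd parts. The only minor difference is in that final count---the paper does it bijectively (adding a vertex to each odd block to obtain a partition of $P_{2n+2m}$ into $i$ even parts, then a colored-edge argument), whereas you use a direct stars-and-bars substitution; both yield $\binom{i}{2m}\binom{n+m-1}{i-1}$.
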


\begin{proof}
    Partitioning $O_{2n}$ into $i \ge 2$ connected parts is the same as removing $i$ edges.
    Hence the connected partitions of $O_{2n}$ into $i$ parts of which $2m$ are odd are in bijection with a certain set
    \begin{equation*}
        M \subseteq \binom{E(O_{2n})}{i}
    \end{equation*}
    of $i$-sets of edges of $O_{2n}$.
    Here the sets in $M$ correspond to those sets of  edges that are removed from $O_{2n}$ when creating the corresponding partition.
    We need to count $|M|$.
    
    As $|S| = i$ for all $S \in M$, the number of pairs $(S,e)$ with $S \in M$ and $e \in S$ is $|M| \cdot i$.
    Hence
    \begin{equation*}
        |M| = \frac{1}{i} \# \{ (S,e) \mid S \in M, \ e \in S \} = \frac{1}{i}  \sum_{e \in E(O_{2n})} \# \{ S \in M \mid e \in S \}.
    \end{equation*}
    Removing any single edge $e \in E(O_{2n})$ from $O_{2n}$ leaves us with the path graph $P_{2n}$ on $2n$ vertices.
    Therefore for any $e \in E(O_{2n})$
    \begin{equation*}
        \# \{ S \in M \mid e \in S \} = \# \{ \text{connected partitions of } P_{2n} \text{ with $i$ parts of which $2m$ are odd} \}
    \end{equation*}
    and hence
    \begin{equation*}
        |M| = \frac{2n}{i} \#\{ \text{connected partitions of } P_{2n} \text{ with $i$ parts of which $2m$ are odd} \}.
    \end{equation*}
    We turn to counting the cardinality of the last set above.
    
    Given a connected partition of $P_{2n}$, the parts are essentially subpaths of the graph.
    By adding a vertex to each odd subpath, we obtain a map
    \begin{equation*}
        \left\{ 
        \begin{array}{c}
            \text{connected  partitions of $P_{2n}$} \\
            \text{into $i$ parts} \\
            \text{of which $2m$ are odd}
        \end{array}
        \right\}
        \longrightarrow
        \left\{ 
        \begin{array}{c}
            \text{connected partitions of $P_{2n+2m}$} \\
            \text{into $i$ even parts}
        \end{array}
        \right\},
    \end{equation*}
    see Figure~\ref{figure:adding_vertices}.
    \begin{figure}[t]
        \centering
        \includegraphics[scale=0.6]{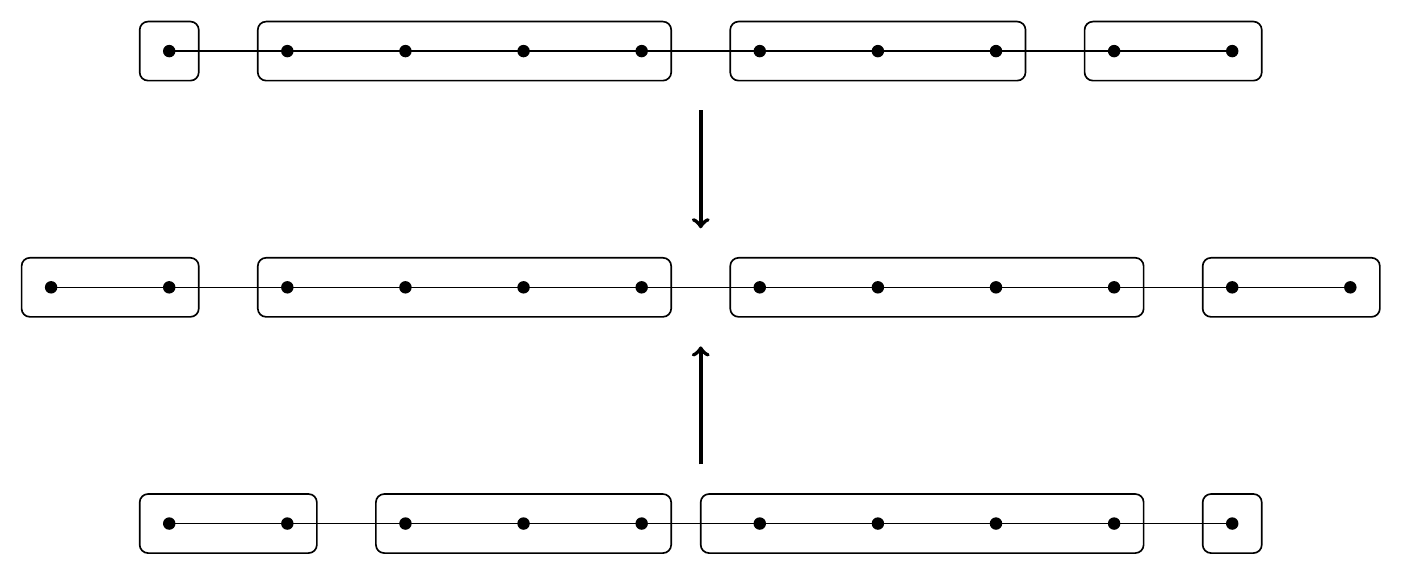}
        \caption{Adding a vertex to each odd subpath of $P_{2n}$ to obtain partitions of $P_{2n+2m}$ into $i$ even parts.
            Here $2n = 10, i = 4$ and  $2m = 2$.
            The top and bottom pictures are partitions of $P_{2n}$ and the middle picture is a partition of $P_{2n+2m}$.}
        \label{figure:adding_vertices}
    \end{figure}
    Given a partition form the set on the right-hand side, by selecting $2m$ subpaths from which we remove one vertex, we see that the fibers of this map all have size $\binom{i}{2m}$.
    Hence
    \begin{equation*}
        \# \left\{ 
        \begin{array}{c}
            \text{connected  partitions of $P_{2n}$} \\
            \text{into $i$ parts} \\
            \text{of which $2m$ are odd}
        \end{array}
        \right\}
        =
        \binom{i}{2m} 
        \# \left\{ 
        \begin{array}{c}
            \text{connected partitions of $P_{2n+2m}$} \\
            \text{into $i$ even parts}
        \end{array}
        \right\}.
    \end{equation*}
    We turn to counting the cardinality of the last set above.
    We obtain this from the following general result.
    
    \begin{claim}
        There are $\binom{\ell-1}{j-1}$ connected partitions of $P_{2\ell}$ into $j$ even parts.
    \end{claim}
    \begin{proof}[Proof of claim.]
        Color every other edge blue and every other edge red, see Figure~\ref{figure:colored_path_graph}.
        There are $\ell$ blue edges and $\ell-1$ red edges.
        The connected partitions of $P_{2\ell}$ into $j$ even parts now correspond to removing any $j-1$ red edges.
    \end{proof}
    By this claim we obtain that
    \begin{equation*}
        \# \left\{ 
        \begin{array}{c}
            \text{connected partitions of $P_{2n+2m}$} \\
            \text{into $i$ even parts}
        \end{array}
        \right\} 
        = 
        \binom{n+m-1}{i-1}.
    \end{equation*}
    Putting everything together, we have obtained that the number of connected partitions of $O_{2n}$ into $i$ parts of which $2m$ are odd, equals
    
    \begin{equation*}
        \frac{2n}{i} \binom{i}{2m} \binom{n+m-1}{i-1}.
    \end{equation*}
\end{proof}

\begin{figure}[t]
    \centering
    \includegraphics[scale=1]{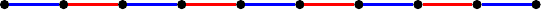}
    \caption{Colored path graph $P_{10}$}
    \label{figure:colored_path_graph}
\end{figure}
    
Using all of the results obtained in this section we are now able to find a formula for the $f$-vectors of $\mathcal{O}(\crown_{2n})$. 

\begin{thm}\label{therom:formula_for_fvector}
    For all $k \ge 0$,
    \begin{equation*}
        f_k(\mathcal{O}(\crown_{2n})) = \delta_k  + \sum_{i=2}^{2n} \sum_{m = 1}^{\lfloor i/2 \rfloor} \frac{2n}{i} \binom{i}{2m} \binom{n+m-1}{i-1} \binom{2m}{i-k}
    \end{equation*}
    where 
    \begin{equation*}
        \delta_k = 
        \begin{cases}
            2 &\text{ if } k = 0 \\
            1 &\text{ if } k = 1 \\
            0 &\text{ if } k > 1
        \end{cases}\,.
    \end{equation*}
\end{thm}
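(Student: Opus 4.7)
The plan is to assemble the results already developed in this section. By the theorem of Stanley quoted above, the $k$-faces of $\mathcal{O}(\crown_{2n})$ are in bijection with the $(k+2)$-CCPs of $\hat{\crown}_{2n}$. Proposition~\ref{prop:k-CCP's_and_pairs_(P,S)} then relates these counts to $|\mathcal{A}_{k+2}|$: for $k \geq 1$ (so $k+2 \geq 3$) the injection is surjective and so $f_k(\mathcal{O}(\crown_{2n})) = |\mathcal{A}_{k+2}|$, while for $k = 0$ we have $f_0(\mathcal{O}(\crown_{2n})) = |\mathcal{A}_2| + 2$, the extra $+2$ accounting for the two degenerate partitions of $\hat{\crown}_{2n}$ shown in Figure~\ref{fig:forbidden_partitions}.

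Next I would substitute the expression for $|\mathcal{A}_{k+2}|$ from Lemma~\ref{lemma:card_of_Ak} (applied with parameter $k+2$ in place of $k$), namely
$$|\mathcal{A}_{k+2}| = \binom{0}{1-k} + \sum_{i=2}^{2n} \sum_{m=1}^{\lfloor i/2 \rfloor} p_n(i, 2m) \binom{2m}{i-k},$$
and then plug in the closed form $p_n(i, 2m) = \frac{2n}{i}\binom{i}{2m}\binom{n+m-1}{i-1}$ from Lemma~\ref{lemma:p_n(i,j)}. This immediately reproduces the double sum in the statement of the theorem.

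The remaining step is to check that the two boundary corrections combine into the stated $\delta_k$. For $k = 0$, the term $\binom{0}{1-k} = \binom{0}{1} = 0$ vanishes, but we gain $+2$ from Proposition~\ref{prop:k-CCP's_and_pairs_(P,S)}(iii), giving $\delta_0 = 2$. For $k = 1$, we have $\binom{0}{0} = 1$ and no other correction, giving $\delta_1 = 1$. For $k \geq 2$, both corrections vanish, giving $\delta_k = 0$. This matches the piecewise definition of $\delta_k$ exactly.

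The proof is really a bookkeeping exercise that pastes together Theorem~3.1, Proposition~\ref{prop:k-CCP's_and_pairs_(P,S)}, Lemma~\ref{lemma:card_of_Ak}, and Lemma~\ref{lemma:p_n(i,j)}. The only mild obstacle I anticipate is tracking the two independent correction sources (the boundary binomial $\binom{0}{1-k}$ inside Lemma~\ref{lemma:card_of_Ak} and the $+2$ from Proposition~\ref{prop:k-CCP's_and_pairs_(P,S)}(iii)) across the three regimes $k = 0$, $k = 1$, and $k \geq 2$ so that they assemble cleanly into the single symbol $\delta_k$.
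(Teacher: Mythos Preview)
Your proposal is correct and follows essentially the same route as the paper: both proofs split into the cases $k=0$ and $k\ge 1$, invoke Stanley's bijection and Proposition~\ref{prop:k-CCP's_and_pairs_(P,S)} to reduce to $|\mathcal{A}_{k+2}|$ (with the $+2$ correction when $k=0$), substitute Lemma~\ref{lemma:card_of_Ak} and then Lemma~\ref{lemma:p_n(i,j)}, and finally check that $\binom{0}{1-k}$ together with the $+2$ correction yields the stated $\delta_k$.
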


\begin{proof}
    First, let $k = 0$.
    We know that $f_0(\mathcal{O}(\crown_{2n}))$ equals the number of $2$-CCP's of $\hat \crown_{2n}$, which by Proposition~\ref{prop:k-CCP's_and_pairs_(P,S)}(iii) equals $|\mathcal{A}_2| + 2$.
    By Lemma~\ref{lemma:card_of_Ak} and Lemma~\ref{lemma:p_n(i,j)} we obtain
    \begin{align*}
        2+ |\mathcal{A}_2| &= 2 + \binom{0}{1} + \sum_{i=2}^{2n} \sum_{m=1}^{\lfloor i/2 \rfloor} p_n(i,2m)\binom{2m}{i} \\
        &= 2  +  \sum_{i=2}^{2n} \sum_{m = 1}^{\lfloor i/2 \rfloor} \frac{2n}{i} \binom{i}{2m} \binom{n+m-1}{i-1} \binom{2m}{i} \smash{\text{\quad\raisebox{1.2\baselineskip}{.}}}
    \end{align*}
    
    Then let $k \ge 1$.
    The number  $f_k(\mathcal{O}(\crown_{2n}))$ equals the number of $k+2$ CCP's of $\hat \crown_{2n}$, which by Proposition~\ref{prop:k-CCP's_and_pairs_(P,S)} equals $|\mathcal{A}_{k+2}|$.
    By Lemma~\ref{lemma:card_of_Ak} and Lemma~\ref{lemma:p_n(i,j)} we obtain
    \begin{align*}
        |\mathcal{A}_{k+2}|  &= \binom{0}{1-k} + \sum_{i=2}^{2n} \sum_{m = 1}^{\lfloor i/2 \rfloor} p_n(i,2m) \binom{2m}{i-k} \\
        &= \binom{0}{1-k} + \sum_{i=2}^{2n} \sum_{m = 1}^{\lfloor i/2 \rfloor} \frac{2n}{i} \binom{i}{2m} \binom{n+m-1}{i-1} \binom{2m}{i-k} \smash{\text{\quad\raisebox{1.2\baselineskip}{.}}}
    \end{align*}
    Here $\binom{0}{1-k} = 1$ when $k=1$ and $\binom{0}{1-k} = 0$ when $k > 1$, finishing the proof.
\end{proof}

Since
\begin{equation*}
    \{ (i,m) \in \mathbb{Z}^2 \mid 2 \le i \le 2n, \ 1 \le m \le i /2  \} = \{ (i,m) \in \mathbb{Z}^2 \mid 1 \le m \le n, \ 2m \le i \le 2n \}
\end{equation*}
we can also write 
\begin{equation*}
    f_k(\mathcal{O}(\crown_{2n})) = \delta_k  + \sum_{m=1}^{n} \sum_{i = 2m}^{2n} \frac{2n}{i} \binom{i}{2m} \binom{n+m-1}{i-1} \binom{2m}{i-k}.
\end{equation*}

Using the above formulas, we were able to verify with a computer that the $f$-vectors $f(\mathcal{O}(\crown_{2n}))$ are log-concave for all values of $n \in \{ 1,\dots,200 \}$.
Unfortunately, we were unable to prove that log-concavity holds for all values of $n$, but we believe that this is true, which allows us to make the following conjecture.

\begin{conj}
\label{conj-f-vector}
    For any positive integer $n$, the entries of the $f$-vector of $\mathcal{O}(\mathcal{C}_{2n})$ form a log-concave sequence.
\end{conj}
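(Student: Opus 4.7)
My plan is to prove Conjecture~\ref{conj-f-vector} by translating the explicit formula from Theorem~\ref{therom:formula_for_fvector} into a generating-function statement. Using the identity $\sum_k \binom{2m}{i-k} x^k = x^{i-2m}(1+x)^{2m}$, the $f$-polynomial becomes
\begin{equation*}
F_n(x) := \sum_{k \ge 0} f_k(\mathcal{O}(\crown_{2n}))\, x^k = (2+x) + \sum_{i=2}^{2n}\sum_{m=1}^{\lfloor i/2\rfloor} \frac{2n}{i}\binom{i}{2m}\binom{n+m-1}{i-1}\, x^{i-2m}(1+x)^{2m}.
\end{equation*}
Each individual summand is a product of linear factors with nonnegative coefficients, hence is real-rooted with only nonpositive roots and, in particular, has a log-concave coefficient sequence. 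The first route I would try is to show that this property is preserved under the full sum, from which Newton's inequalities would yield ultra-log-concavity, a strengthening of what is conjectured.

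The natural tool for combining many real-rooted polynomials into one is the theory of interlacing families. Grouping the summands by the parameter $m$, set
\begin{equation*}
G_{n,m}(x) = \sum_{i=2m}^{2n} \frac{2n}{i}\binom{i}{2m}\binom{n+m-1}{i-1}\, x^{i-2m}(1+x)^{2m},
\end{equation*}
and try to prove that $\{G_{n,m}\}_{m=1}^{n}$ is a mutually interlacing family; this would force $F_n$ itself to be real-rooted. A multivariate alternative is to homogenize: consider $\widetilde F_n(x,y) = \sum_k f_k\, x^k y^{2n-k}$ and try to show it lies in the Lorentzian cone of Br\"and\'en--Huh. Each homogenized summand $x^{i-2m}(x+y)^{2m} y^{2n-i}$ is a product of nonnegative linear forms and is therefore Lorentzian, so the question reduces to whether the specific positive combination dictated by the binomial weights $\tfrac{2n}{i}\binom{i}{2m}\binom{n+m-1}{i-1}$ preserves membership in the Lorentzian cone; this could be attacked by writing $\widetilde F_n$ as a volume-type polynomial arising from a matroid or a polymatroid naturally associated with connected partitions of the cycle graph $O_{2n}$.

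A third, combinatorial route uses the CCP interpretation. By Proposition~\ref{prop:k-CCP's_and_pairs_(P,S)}, $f_k$ is, up to boundary terms, the cardinality of the set $\mathcal{A}_{k+2}$ of pairs $(P,S)$ with $P$ a CCP of $\crown_{2n}$ and $S$ a chosen subset of odd parts of $P$ of prescribed size. Log-concavity would then follow from an explicit injection $\mathcal{A}_{k+1}\times\mathcal{A}_{k+3} \hookrightarrow \mathcal{A}_{k+2}\times\mathcal{A}_{k+2}$, which one could hope to build from a local merge/split move on two cyclically consecutive blocks of one partition, paired with a compensating adjustment to $S$. The main obstacle, shared by all three approaches, is the summation structure of the formula: individual summands are as well-behaved as one could wish, but neither real-rootedness nor Lorentzian-ness nor the existence of a clean injection is automatic for sums. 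I expect the hardest step to be either establishing the interlacing property for the family $\{G_{n,m}\}$ (which may require a delicate double induction on $n$ and $m$) or, in the combinatorial approach, verifying that a single global swap rule respects connectedness, compatibility, and the parity constraints on the blocks of $S$ uniformly over all cyclic positions.
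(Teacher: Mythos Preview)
This statement is a \emph{conjecture} in the paper, not a theorem: the authors verified it by computer for $n\in\{1,\dots,200\}$ and explicitly state they were unable to prove it in general. There is therefore no proof in the paper to compare your proposal against.

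Your proposal is not a proof either; it is a menu of three strategies, each left with an openly acknowledged gap. That is fine as a research plan, but one of the routes can be eliminated outright. Your first approach aims to show that $F_n(x)=\sum_{k\ge 0} f_k\,x^k$ is real-rooted by building an interlacing family $\{G_{n,m}\}$. The paper's Remark immediately following Table~\ref{table-f-vector} records that none of the $f$-polynomials for $2\le n\le 6$ are real-rooted, and that this remains true under the alternative conventions where $\emptyset$ and/or the full polytope are omitted. Hence $F_n$ is provably \emph{not} real-rooted for small $n$, and any argument that would force real-rootedness (in particular, any argument showing $\{G_{n,m}\}$ is a common interlacing family whose sum is $F_n$ minus a linear correction) must fail.

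The Lorentzian route and the injection $\mathcal{A}_{k+1}\times\mathcal{A}_{k+3}\hookrightarrow\mathcal{A}_{k+2}\times\mathcal{A}_{k+2}$ are not ruled out by this obstruction, but as you yourself note, neither Lorentzianness nor the existence of such an injection is automatic for the sum, and you have not supplied the missing step in either case. In short: the conjecture remains open, your first approach is blocked by a computed counterexample to its intermediate goal, and the other two are plausible programs rather than proofs.
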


\begin{table}[H]
\centering
\begin{tabular}{|| c || c ||} 
 \hline
 $n$ & $f(\mathcal{O}({\mathcal{C}_{2n}}))$ \\
 \hline
 \hline
 2 & (1, 7, 17, 18, 8, 1) \\
 \hline
 3 & (1, 18, 73, 129, 116, 54, 12, 1) \\ 
 \hline
 4 & (1, 47, 265, 656, 896, 730, 360, 104, 16, 1) \\
 \hline
 5 & (1, 123, 881, 2810, 5170, 6045, 4672, 2405, 810, 170, 20, 1) \\
 \hline
 6 & (1, 322, 2785, 10884, 25228, 38517, 40692, 30408, 16140, 6018, 1532, 252, 24, 1) \\
 \hline
\end{tabular}
\caption{Table of $f$-vectors of $\mathcal{O}(\mathcal{C}_{2n})$ for $2 \leq n \leq 6$.}
\label{table-f-vector}
\end{table}

\begin{rmk}
    A natural question to be asked is if the $f$-polynomials of $\mathcal{O}(\mathcal{C}_{2n})$ are real-rooted or not, since the real-rootedness of a polynomial implies the log-concavity of its coefficients. However, none of the polynomials obtained from the vectors shown in Table~\ref{table-f-vector} are real-rooted.

    Alternative definitions of the $f$-polynomial, where $\emptyset$ and $\mathfrak{p}$ are not counted as faces, could be studied, but none of the polynomials obtained this way from the data in Table~\ref{table-f-vector} are real-rooted.
\end{rmk}

We finish this section by finding another expression for the number of vertices of $\mathcal{O}(\crown_{2n})$. Recall that the \emph{Lucas sequence} is an integer sequence defined recursively by $L(0) = 2$, $L(1) = 1$, and $L(n) = L(n-1) + L(n-2)$ if $n > 1$. It is closely related to the cycle graph $O_n$ since it corresponds to the number of matchings  and to the number of independent vertex sets and vertex covers of $O_n$  for $n \ge 3$ \cite[\oeis{A000032}]{oeis}.
From this we deduce:

\begin{cor}
    The number of vertices of $\mathcal{O}(\crown_{2n})$ is $L(2n)$. In particular,
    \[
        L(2n) = \left( \frac{1+\sqrt{5}}{2} \right)^{2n} + \left( \frac{1-\sqrt{5}}{2} \right)^{2n} = 2 + \sum_{i=2}^{2n} \sum_{m = 1}^{\lfloor i/2 \rfloor} \frac{2n}{i} \binom{i}{2m} \binom{n+m-1}{i-1} \binom{2m}{i}
    \]
    for every $n \geq 0$.
\end{cor}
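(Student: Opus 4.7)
The plan is to prove the two equalities separately: first that $f_0(\mathcal{O}(\crown_{2n})) = L(2n)$, and then to record the standard closed form for $L(2n)$ and to obtain the summation formula as an immediate specialization of Theorem~\ref{therom:formula_for_fvector}.

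For the first equality, I would assemble three bijections already available or alluded to in the paper. By the description of vertices of order polytopes recalled in Section~\ref{section:preliminaries}, the vertices of $\mathcal{O}(\crown_{2n})$ are in bijection with filters of $\crown_{2n}$. Next, there is the standard bijection between filters $F$ of a finite poset $P$ and antichains of $P$, sending $F$ to its set of minimal elements (with inverse sending an antichain $A$ to $\{ y \in P : y \geq a \text{ for some } a \in A \}$). Finally, by the very definition of the comparability graph, antichains of $\crown_{2n}$ coincide with independent sets of its comparability graph, which is the cycle graph $O_{2n}$ on $2n$ vertices. Composing these three bijections gives
\[
    f_0(\mathcal{O}(\crown_{2n})) = \#\{\text{independent sets of } O_{2n}\}.
\]
Since it is classical (and recalled just before the corollary via \cite[\oeis{A000032}]{oeis}) that the number of independent vertex sets of $O_m$ is $L(m)$ for $m \geq 3$, we obtain $f_0(\mathcal{O}(\crown_{2n})) = L(2n)$ for all $n \geq 2$; a direct check handles the small cases $n = 0, 1$.

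For the closed form $L(2n) = \varphi^{2n} + \bar\varphi^{2n}$, where $\varphi = (1+\sqrt{5})/2$ and $\bar\varphi = (1-\sqrt{5})/2$, I would just observe that this is the standard Binet-type formula for the Lucas numbers, obtained by solving the linear recurrence $L(n) = L(n-1) + L(n-2)$ with characteristic polynomial $x^2 - x - 1$ whose roots are $\varphi$ and $\bar\varphi$, and then verifying the initial conditions $L(0) = 2$ and $L(1) = 1$. For the final equality, I would simply instantiate Theorem~\ref{therom:formula_for_fvector} at $k = 0$, where $\delta_0 = 2$, to read off
\[
    L(2n) = f_0(\mathcal{O}(\crown_{2n})) = 2 + \sum_{i=2}^{2n} \sum_{m = 1}^{\lfloor i/2 \rfloor} \frac{2n}{i} \binom{i}{2m} \binom{n+m-1}{i-1} \binom{2m}{i}.
\]

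There is no real obstacle in this corollary, as each piece is either stated earlier in the paper or is a classical fact; the only substantive point is observing that since all cover relations of $\crown_{2n}$ correspond precisely to edges of the cycle $O_{2n}$, antichains of the poset coincide on the nose with independent sets of the graph, converting the vertex count into the Lucas number.
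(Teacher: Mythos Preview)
Your proposal is correct and follows essentially the same route as the paper: identify vertices of $\mathcal{O}(\crown_{2n})$ with filters, hence antichains, hence independent sets of the cycle $O_{2n}$, and invoke the classical fact that the latter are counted by $L(2n)$; then read off the summation from Theorem~\ref{therom:formula_for_fvector} at $k=0$ and cite the Binet formula. The only cosmetic difference is that you spell out the filter--antichain bijection and the small-$n$ checks explicitly.
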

\begin{proof}
    The first equality is well-known and follows from the definition of Lucas sequence. For the second equality, observe that the number of vertices in an order polytope of a poset $P$ is the number of filters in $P$, which equals the number of antichains in $P$.
    Hence, it suffices to compute the number of antichains in $\crown_{2n}$. The Hasse diagram of $\crown_{2n}$ as a graph is also its own comparability graph. Since the number of independent sets in a comparability graph of a poset is the number of antichains in that poset, there are $L(2n)$ antichains in $\crown_{2n}$.
    The formula now follows from Theorem~\ref{therom:formula_for_fvector} using $k=0$.
\end{proof}

\section{The Ehrhart polynomial of \texorpdfstring{$\mathcal{O}(\crown_{2n})$}-}
\label{sec-ehrhart}
In this section, we study the order polynomial of $\mathcal{C}_{2n}$. We present a recursive formula to compute $\Omega_{\mathcal{C}_{2n}}(t)$ in terms of the order polynomials of zigzags. We use this recursion to give an alternative proof for the nonnegativity of the coefficients of $\Omega_{\mathcal{C}_{2n}}(t)$, recently proved by Ferroni, Morales and Panova \cite{ferroni2025skew}.

Given a finite poset $P$, let $\Omega_P \colon \mathbb{R} \to \mathbb{R}$ be the function such that $\Omega_P (n)$ counts the number of order-preserving maps from $P$ to $[n]$. This function was first introduced by Richard P. Stanley in his PhD thesis, where it was also proved that it is a polynomial \cite[Proposition 13.1]{stanleyPHDthesis}, called the \emph{order polynomial} of $P$. Later, Stanley proved \cite[Theorem 4.1]{stanley1986two} that the Ehrhart polynomial $L(\mathcal{O}(P),t)$ of $\mathcal{O}(P)$ and the order polynomial $\Omega_P(t)$ of $P$ are related by the following:
\[
    L(\mathcal{O}(P),t) = \Omega_P(t+1).
\]
Therefore, the results in this section also provide a formula to compute the Ehrhart polynomial of $\mathcal{O}(\mathcal{C}_{2n})$ as an expression of Ehrhart polynomials of zigzags and prove that its coefficients are nonnegative.

We start by stating the reciprocity theorem for graded posets. Recall that a poset $P = (P, \preceq)$ is \emph{graded} if there exists a rank function $\rho \colon P \to \mathbb{N}$ such that
\begin{enumerate}[(i)]
    \item $x \prec y$ implies $\rho(x) < \rho(y)$;
    \item if $y$ covers $x$, then $\rho(y) = \rho(x) + 1$.
\end{enumerate}
A graded poset $P$ is called $r$\emph{-graded} if every maximal chain of $P$ has length $r$. For example, $\mathcal{C}_{2n}$ and $Z_{n+1}$ are $1$-graded posets for every $n \geq 1$.

\begin{lem}\cite[Theorem 2.3]{MR2120105}
\label{reciprocity-graded-posets}
    Let $P = ([n], \preceq)$ be an $r$-graded poset. Then
    \[
        \Omega_P(t-r) = (-1)^{n} \Omega_P(-t).
    \]
\end{lem}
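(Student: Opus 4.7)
The plan is to reduce the identity to Stanley's classical order-polynomial reciprocity by means of a shift-by-rank argument that is available precisely because $P$ is $r$-graded. First I would invoke the general reciprocity theorem: for every finite poset $P$ on $n$ elements one has
\[
    \Omega_P^{\circ}(t) = (-1)^{n}\,\Omega_P(-t),
\]
where $\Omega_P^{\circ}(t)$ is the strict order polynomial counting strict order-preserving maps $f \colon P \to [t]$ (so $x \prec y$ implies $f(x) < f(y)$). With this in hand, the task reduces to showing that for an $r$-graded poset one has $\Omega_P^{\circ}(t) = \Omega_P(t-r)$.

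To establish this identity, I would normalize the rank function so that $\rho \colon P \to \{0,1,\dots,r\}$ (which is possible because $P$ is $r$-graded), and exhibit the bijection $f \mapsto g$, with $g(x) := f(x) - \rho(x)$, between strict order-preserving maps $f \colon P \to [t]$ and weak order-preserving maps $g \colon P \to [t-r]$. The verifications I would carry out in order are: (a) \emph{Monotonicity:} for $x \prec y$, any saturated chain from $x$ to $y$ has length $\rho(y) - \rho(x)$, so iterating the strictness of $f$ along it gives $f(y) \geq f(x) + \rho(y) - \rho(x)$, hence $g(y) \geq g(x)$. (b) \emph{Range:} each $x \in P$ lies on a saturated chain joining a rank-$0$ element to a rank-$r$ element, yielding $1 + \rho(x) \leq f(x) \leq t - (r - \rho(x))$, so $1 \leq g(x) \leq t-r$. (c) \emph{Inverse:} the map $g \mapsto g + \rho$ is checked in the same way to land in the set of strict order-preserving maps $P \to [t]$.

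Combining $\Omega_P^{\circ}(t) = \Omega_P(t-r)$ with the classical reciprocity then gives $\Omega_P(t-r) = (-1)^{n}\Omega_P(-t)$, as claimed. The main subtlety is step (b): it depends crucially on $P$ being $r$-graded rather than merely graded, since one needs every element of $P$ to lie on a maximal chain of length exactly $r$ in order to obtain both the lower and the upper bound on $f(x)$ simultaneously. Without this hypothesis the shift-by-rank map is still well-defined but need not be a bijection between the two sets, and the identity $\Omega_P^{\circ}(t) = \Omega_P(t-r)$ breaks down.
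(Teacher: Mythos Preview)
Your argument is correct. The paper does not supply its own proof of this lemma: it is quoted directly as \cite[Theorem~2.3]{MR2120105} and used as a black box in the derivation of \eqref{recursion-order-pol2}. What you have written is the standard proof, combining Stanley's reciprocity $\Omega_P^{\circ}(t)=(-1)^{n}\Omega_P(-t)$ with the shift-by-rank bijection $f\mapsto f-\rho$ between strict maps $P\to[t]$ and weak maps $P\to[t-r]$.

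One small point worth making explicit: your normalization ``$\rho\colon P\to\{0,1,\dots,r\}$, which is possible because $P$ is $r$-graded'' deserves a line of justification, since the paper's definition only posits \emph{some} rank function. The clean way is to observe that if every maximal chain has length $r$, then for any $x$ all saturated chains from a minimal element up to $x$ have the same length (extend two such chains upward by a common saturated chain from $x$ to a maximal element and compare total lengths). Defining $\rho(x)$ to be that common length then gives a rank function with $\rho(\text{min})=0$ and $\rho(\text{max})=r$, which is exactly what your range check~(b) uses. You already flag this as the key subtlety, so this is just a matter of spelling it out.
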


Now we can state the main result of our section. In the following proof, for $a,b \in \mathbb{Z}$ we shall write $[a,b]$ for the closed interval $\{ k \in \mathbb{Z} \colon a \le k \le b \}$.

\begin{thm}
\label{ehrhart-recursion}
    \begin{align}
        \Omega_{\mathcal{C}_{2n}}(t) &= \Omega_{\mathcal{C}_{2n}}(t-1) + n \cdot \Omega_{Z_{2n-1}}(t-1) + \Omega_{Z_{2n-3}}(t) + \sum_{1 \leq i < j \leq n} \Omega_{Z_{2(n-j+i)-1}}(t-1) \cdot \Omega_{Z_{2(j-i)-1}}(t) \label{recursion-order-pol} \\
        &= \Omega_{\mathcal{C}_{2n}}(-t) - n \cdot \Omega_{Z_{2n-1}}(-t) + t \cdot \Omega_{Z_{2n-3}}(t) - \sum_{\substack{1 \leq i < j \leq n \\ (i,j) \neq (1,n)}} \Omega_{Z_{2(n-j+i)-1}}(-t) \cdot \Omega_{Z_{2(j-i)-1}}(t) \label{recursion-order-pol2}
    \end{align}
    for every $n \geq 2$.
\end{thm}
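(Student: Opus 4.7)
The plan is to prove the first identity combinatorially, by enumerating the order-preserving maps $f\colon \mathcal{C}_{2n} \to [t]$ and partitioning them by whether $\max f = t$, and then to derive the second identity from the first via Lemma~\ref{reciprocity-graded-posets}, taking care that the one-element zigzag $Z_1$ is $0$-graded rather than $1$-graded.

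For the first identity, label the minimal elements of $\mathcal{C}_{2n}$ as $a_1, \ldots, a_n$ (the odd indices $1, 3, \ldots, 2n-1$) and the maximal ones as $b_1, \ldots, b_n$ (the even indices), so that $a_i \prec b_{i-1}, b_i$ cyclically (with $b_0 := b_n$). The difference $\Omega_{\mathcal{C}_{2n}}(t) - \Omega_{\mathcal{C}_{2n}}(t-1)$ counts the maps with $\max f = t$. For each such $f$, put $S := \{i : f(b_i) = t\}$; the key observation is that $S \neq \emptyset$, because $f(a_k) = t$ would force $f(b_{k-1}) \geq t$. Split by $|S|$. If $|S| = 1$, then each of the $n$ choices of the singleton $\{i\}$ forces every other vertex into $[t-1]$ (every remaining $b$ is outside $S$, and every $a$ has a $b$-neighbour outside $S$), and the remaining poset is $Z_{2n-1}$, contributing $n\cdot \Omega_{Z_{2n-1}}(t-1)$.

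If $|S| \geq 2$, let $i := \min S$ and $j := \max S$ in $[n]$, so $1 \leq i < j \leq n$. Cutting the cycle at $b_i$ and $b_j$ separates the remaining vertices into the \emph{inner arc} $a_{i+1}, b_{i+1}, a_{i+2}, \ldots, b_{j-1}, a_j$ (a copy of $Z_{2(j-i)-1}$) and the \emph{outer arc} $a_{j+1}, b_{j+1}, \ldots, b_{i-1}, a_i$ (a copy of $Z_{2(n-j+i)-1}$). Every $b$-vertex in the outer arc has value $\leq t-1$, and every $a$-vertex in the outer arc has at least one $b$-neighbour also there \emph{except} for the single vertex $a_1$, which comprises the entire outer arc precisely when $(i,j) = (1,n)$ and whose two $b$-neighbours $b_n, b_1$ both lie in $S$. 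Hence for $(i,j) \neq (1,n)$ the outer arc is forced into $[t-1]$ and the inner arc is free in $[t]$, contributing $\Omega_{Z_{2(n-j+i)-1}}(t-1) \cdot \Omega_{Z_{2(j-i)-1}}(t)$. For $(i,j) = (1,n)$, a further split by whether $f(a_1) < t$ (contributing $\Omega_{Z_1}(t-1) \cdot \Omega_{Z_{2n-3}}(t)$, which matches the $(1,n)$ summand) or $f(a_1) = t$ (which forces $b_1 = b_n = t$ and leaves $a_2, b_2, \ldots, a_n \cong Z_{2n-3}$ free in $[t]$, giving the standalone $\Omega_{Z_{2n-3}}(t)$) accounts for the remaining terms of the first identity.

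For the second identity, Lemma~\ref{reciprocity-graded-posets} gives $\Omega_{\mathcal{C}_{2n}}(t-1) = \Omega_{\mathcal{C}_{2n}}(-t)$ (since $\mathcal{C}_{2n}$ is $1$-graded with $2n$ elements) and $\Omega_{Z_{2k-1}}(t-1) = -\Omega_{Z_{2k-1}}(-t)$ for $k \geq 2$, while for $k = 1$ the identity fails ($\Omega_{Z_1}(t-1) = t-1$ versus $-\Omega_{Z_1}(-t) = t$). The sole appearance of $\Omega_{Z_1}(t-1)$ in the first identity is the $(1,n)$ summand; combining it with the isolated term as $\Omega_{Z_{2n-3}}(t) + \Omega_{Z_1}(t-1) \cdot \Omega_{Z_{2n-3}}(t) = t \cdot \Omega_{Z_{2n-3}}(t)$ allows the first identity to be rewritten with $(1,n)$ omitted from the sum and the isolated term replaced by $t \cdot \Omega_{Z_{2n-3}}(t)$; every remaining zigzag factor is then $1$-graded, and reciprocity applied termwise yields the second identity. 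The main obstacle throughout is this $(1,n)$ exception: uniquely among all pairs, its outer arc is a single $a$-vertex with both $b$-neighbours in $S$, and this asymmetry is exactly what creates the standalone $\Omega_{Z_{2n-3}}(t)$ in the first formula and forces $(1,n)$ to be excluded from the sum in the second.
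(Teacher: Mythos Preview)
Your proof is correct and follows essentially the same approach as the paper. Both argue combinatorially for the first identity by partitioning the order-preserving maps $f\colon\mathcal{C}_{2n}\to[m]$ according to the set of maximal elements attaining the value $m$, and both derive the second identity from the first by isolating the $(1,n)$ summand (the unique place where a $Z_1$ factor with shifted argument appears) and then applying Lemma~\ref{reciprocity-graded-posets} termwise. The only cosmetic difference is the order of the case split: the paper first separates off the subcase $f(1)=m$ (your ``$f(a_1)=t$'') as $\mathcal{F}_4$ and then partitions the remainder $\mathcal{F}_5$ by the pair $(i,j)$, whereas you first partition by $(\min S,\max S)$ and then, inside the $(1,n)$ cell, split according to whether $f(a_1)=t$; the resulting pieces and counts are identical.
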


\begin{proof}
    Since Equation~\ref{recursion-order-pol} is an equality between polynomials, it is sufficient to prove it for nonnegative integers. Let $m \geq 2$ be an integer. Let $\mathcal{F}(\mathcal{C}_{2n}, m)$ be the set of order-preserving maps from $\mathcal{C}_{2n}$ to $[m]$. By definition, $|\mathcal{F}(\mathcal{C}_{2n}, m)| = \Omega_{\mathcal{C}_{2n}} (m)$. We prove \eqref{recursion-order-pol} by partitioning the set $\mathcal{F}(\mathcal{C}_{2n}, m)$.

    Recall that $\mathcal{C}_{2n}$ is defined by the cover relations $1 \prec 2 \succ 3 \prec \cdots \succ 2n-1 \prec 2n \succ 1$, see Figure~\ref{fig-crown-poset}.
    Let
    \begin{gather*}
        \mathcal{F}_1 \coloneqq \{ f \in \mathcal{F}(\mathcal{C}_{2n}, m) \colon f(a) < m \mbox{ for all } a \in [2n] \} \smash{\text{\quad\raisebox{-1.0\baselineskip}{.}}}\\
        \mathcal{F}_2 \coloneqq \{ f \in \mathcal{F}(\mathcal{C}_{2n}, m) \colon |f^{-1}(m)| = 1  \} \\
        \mathcal{F}_3 \coloneqq \{ f \in \mathcal{F}(\mathcal{C}_{2n}, m) \colon |f^{-1}(m)| \geq 2  \}
    \end{gather*}
    Clearly, $\mathcal{F}(\mathcal{C}_{2n}, m) = \mathcal{F}_1 \sqcup \mathcal{F}_2 \sqcup \mathcal{F}_3$. Now, let us partition $\mathcal{F}_3$ as $\mathcal{F}_4 \sqcup \mathcal{F}_5$, where
    \[
        \mathcal{F}_4 \coloneqq \{ f \in \mathcal{F}_3 \colon f(1) = m \} \quad \mbox{and} \quad \mathcal{F}_5 \coloneqq \{ f \in \mathcal{F}_3 \colon f(1) < m \}.
    \]
    We claim that
    \begin{equation}
    \label{eq-disjoint-union}
        \mathcal{F}_5 = \bigsqcup_{1 \leq i < j \leq n} \mathcal{F}_{i,j},
    \end{equation}
    where $\mathcal{F}_{i,j} = \{ f \in \mathcal{F}_5 \colon f(2i) = f(2j) = m \mbox{ and } f^{-1}(m) \subseteq [2i,2j] \}.$ In fact, let $f \in \mathcal{F}_5$. Then $f(1) < m$ and there exist $a < b$ in $[2n]$ such that $f(a) = f(b) = m$. Assume, without loss of generality, that $a = \min \{ k \in [2n] \colon f(k) = m \}$ and $b = \max \{ k \in [2n] \colon f(k) = m \}$. By definition, $a$ and $b$ must be even, that is, $a = 2i$ and $b = 2j$ for some $i < j$ in $[n]$. Hence, $f \in \mathcal{F}_{i,j}$. Moreover, $\mathcal{F}_{i,j} \cap \mathcal{F}_{k,l} = \emptyset$ if $(i,j) \neq (k,l)$, which verifies \eqref{eq-disjoint-union}.

    Now, we can finally prove \eqref{recursion-order-pol}. By definition, $| \mathcal{F}_1| = \Omega_{\mathcal{C}_{2n}}(m-1)$. To count the number of elements in $\mathcal{F}_2$, let $f \in \mathcal{F}_2$ be such that $f(a) = m$ for some $a \in [2n]$. By definition of $\mathcal{F}_2$, $a$ must be even, and there are $\Omega_{Z_{2n-1}}(m-1)$ ways of assigning values to the other $2n-1$ elements of $\mathcal{C}_{2n}$. Since there are $n$ even numbers in $[2n]$, $|\mathcal{F}_2| = n \cdot \Omega_{Z_{2n-1}}(m-1)$. To count the number of elements in $\mathcal{F}_4$, observe that
    \[
        \mathcal{F}_4 = \{ f \in \mathcal{F}(\mathcal{C}_{2n}, m) \colon f(1) = f(2) = f(2n) = m\}.
    \]
    Hence $|\mathcal{F}_4| = \Omega_{Z_{2n-3}}(m)$. Lastly, to count the number of elements of $\mathcal{F}_5$, let $i < j$ be numbers in $[n]$ and let $f \in \mathcal{F}_{i,j}$. There are $2j - 2i - 1$ elements between $2i$ and $2j$, and these elements can be mapped to $m$ by $f$, so there are $\Omega_{Z_{2(j-i)-1}}(m)$ ways to assign values to $f(2i+1), \ldots, f(2j-1)$. Meanwhile, $1, \ldots, 2i-1, 2j+1, \ldots, 2n$ cannot be mapped to $m$ by $f$, hence there are $\Omega_{Z_{2(n-j+i)-1}}(m-1)$ ways to assign values of $f$ to these elements. The choices between $2i$ and $2j$, and to the left of $2i$ or to the right of $2j$ are independent of each other, so $|\mathcal{F}_{i,j}| = \Omega_{Z_{2(n-j+i)-1}}(m-1) \cdot \Omega_{Z_{2(j-i)-1}}(m)$. Summing over all $1 \leq i < j \leq n$ gives the final term in \eqref{recursion-order-pol}.

    To prove \eqref{recursion-order-pol2}, first observe that, if $1 \leq i < j \leq n$, then $2(n-j+i)-1 = 1$ if and only if $i = 1$ and $j=n$. We rewrite Equation \ref{recursion-order-pol} as
    \begin{align}
        \Omega_{\mathcal{C}_{2n}}(t) = &\Omega_{\mathcal{C}_{2n}}(t-1) + n \cdot \Omega_{Z_{2n-1}}(t-1) + \Omega_{Z_{2n-3}}(t) + \Omega_{Z_1}(t-1) \cdot \Omega_{Z_{2n-3}}(t) \nonumber \\
        &+ \sum_{\substack{1 \leq i < j \leq n \\ (i,j) \neq (1,n)}} \Omega_{Z_{2(n-j+i)-1}}(t-1) \cdot \Omega_{Z_{2(j-i)-1}}(t) \nonumber \\
        = &\Omega_{\mathcal{C}_{2n}}(t-1) + n \cdot \Omega_{Z_{2n-1}}(t-1) + t \cdot \Omega_{Z_{2n-3}}(t) + \sum_{\substack{1 \leq i < j \leq n \\ (i,j) \neq (1,n)}} \Omega_{Z_{2(n-j+i)-1}}(t-1) \cdot \Omega_{Z_{2(j-i)-1}}(t) \label{recursion-order-pol3}
    \end{align}
    where the second equality holds because $\Omega_{Z_1}(t) = t$. Equation \ref{recursion-order-pol2} follows from a combination of \eqref{recursion-order-pol3} and Lemma \ref{reciprocity-graded-posets}.
\end{proof}

\begin{rmk}
    Using a similar argument as the one used in Theorem \ref{ehrhart-recursion}, one can show that
    \[
        \Omega_{Z_n}(t) = \Omega_{Z_n}(t-1) + \Omega_{Z_{n-2}}(t) + \sum_{1 \leq i \leq j \leq \lfloor n/2 \rfloor} \Omega_{Z_{2i-1}}(t-1)\cdot \Omega_{Z_{2j-2i-1}}(t) \cdot \Omega_{Z_{n-2j}}(t-1),
    \]
    where we set $\Omega_{Z_{-1}}(t) \coloneqq 1$, which is a recursive formula that does not seem to appear in the literature.
\end{rmk}

Given a polynomial $p(t) \in \mathbb{R}[t]$, let $[t^k] p(t)$ denote the coefficient of $t^k$ in $p(t)$.

\begin{lem}[\cite{ferroni2025skew}, Page 10] \label{linear-coefficient-zigzag}
    For every $n \ge 1$ the linear coefficient of $\Omega_{Z_{n}}(t)$ is given by 
    \[
        [t]\Omega_{Z_{n}}(t) = \frac{\lfloor \frac{n-1}{2} \rfloor ! \cdot \lceil \frac{n-1}{2} \rceil !}{n!} = \frac{1}{n} \binom{n-1}{\lfloor\frac{n-1}{2} \rfloor}^{-1} \quad.
    \]
\end{lem}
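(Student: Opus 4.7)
My plan is to combine reciprocity for graded posets with the $h^{*}$-polynomial expansion of $L(\mathcal{O}(Z_n), s)$. Since $Z_n$ is a $1$-graded poset, Lemma~\ref{reciprocity-graded-posets} gives $\Omega_{Z_n}(t-1) = (-1)^n \Omega_{Z_n}(-t)$; evaluating at $t = 1$, together with the identity $\Omega_{Z_n}(0) = 0$, forces $\Omega_{Z_n}(-1) = 0$. Hence $\Omega_{Z_n}(t) = t(t+1)Q_n(t)$ for a polynomial $Q_n$ of degree $n-2$, and the target quantity equals $Q_n(0) = [t]\Omega_{Z_n}(t)$.

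Next, expand the Ehrhart polynomial as $L(\mathcal{O}(Z_n), s) = \sum_{k=0}^{n} h_k^{*}(\mathcal{O}(Z_n)) \binom{s+n-k}{n}$ and substitute $s = t-1$. For each $0 \le k \le n-1$ the binomial $\binom{t+n-k-1}{n}$ contains $t$ as a factor (since $0$ lies in the index range), and a short calculation shows that the remaining cofactor evaluated at $t = 0$ equals $(-1)^k k!(n-k-1)!/n!$; meanwhile $h_n^{*}(\mathcal{O}(Z_n)) = 0$, as otherwise it would produce a nonzero constant term, contradicting $\Omega_{Z_n}(0) = 0$. This yields
\[
    [t]\Omega_{Z_n}(t) \;=\; \frac{1}{n}\sum_{k=0}^{n-1}\frac{(-1)^k\, h_k^{*}(\mathcal{O}(Z_n))}{\binom{n-1}{k}},
\]
so the lemma becomes equivalent to the identity $\sum_{k=0}^{n-1}(-1)^k h_k^{*}(\mathcal{O}(Z_n))/\binom{n-1}{k} = 1/\binom{n-1}{\lfloor(n-1)/2\rfloor}$.

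To establish this identity I would invoke the Coons-Sullivant theorem~\cite{coons2019h}, which expresses $h_k^{*}(\mathcal{O}(Z_n))$ as the number of linear extensions of $Z_n$ carrying a prescribed descent-like statistic equal to $k$. The identity then becomes a weighted signed sum over the alternating permutations of $[n]$, which I would attempt to prove by constructing a sign-reversing involution that cancels all descent values except the central one $\lfloor(n-1)/2\rfloor$, or alternatively by a generating-function argument using the exponential generating functions for $\sec$ and $\tan$. A more self-contained backup route is to bypass Coons-Sullivant entirely: using the recursion for $\Omega_{Z_n}(t)$ from the Remark above, differentiate both sides at $t = 0$ and observe that most product-rule terms vanish because $\Omega_{Z_m}(0) = \Omega_{Z_m}(-1) = 0$ for $m \ge 1$, producing a short recurrence for $c_n := [t]\Omega_{Z_n}(t)$ that one verifies inductively against the base cases $c_1 = 1$ and $c_2 = 1/2$. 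The main obstacle in either route is the combinatorial bookkeeping: finding the right sign-reversing involution on alternating permutations, or carefully tracking which summands of the differentiated recursion actually survive.
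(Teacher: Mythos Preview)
The paper does not prove this lemma at all: it is quoted verbatim from \cite{ferroni2025skew} and used as a black box. So there is no ``paper's proof'' to compare against; your proposal is an attempt to supply an argument where the paper supplies none.

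Your reduction is correct and clean. Reciprocity indeed gives $\Omega_{Z_n}(-1)=0$, the $h^*$-expansion combined with $L(\mathcal{O}(Z_n),t-1)=\Omega_{Z_n}(t)$ yields
\[
[t]\,\Omega_{Z_n}(t)=\frac{1}{n}\sum_{k=0}^{n-1}\frac{(-1)^k h_k^*(\mathcal{O}(Z_n))}{\binom{n-1}{k}},
\]
and the vanishing of $h_n^*$ follows already from $\deg h^*(\mathcal{O}(Z_n),t)=n-2$ (Gorenstein of index~$3$). So the lemma is genuinely equivalent to the binomial-weighted alternating sum you wrote down.

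The gap is that neither of your two endgames is actually executed, and both have real obstacles. The sign-reversing involution route is only gestured at: the weights $1/\binom{n-1}{k}$ are not uniform across descent classes, so you would need not just a sign-reversing involution on alternating permutations but one compatible with these rational weights, and you give no indication of what the involution would be. The ``backup'' recursion route has a more concrete problem: from reciprocity one gets $\Omega_{Z_n}'(-1)=(-1)^{n+1}c_n$, so differentiating the zigzag recursion at $t=0$ gives
\[
c_n=(-1)^{n+1}c_n+c_{n-2}+(\text{sum contribution}),
\]
which for odd $n$ collapses to $0=c_{n-2}+(\text{sum contribution})$ and says nothing about $c_n$. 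Thus the recursion, as stated, cannot close the induction for odd $n$; you would need an additional independent relation (for example a second recursion coming from removing a minimal rather than a maximal element, or a direct argument for one parity). As written, the proposal is a correct reformulation followed by two unfinished sketches, not a proof.
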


In order to prove that the order polynomial of $\mathcal{C}_{2n}$ has nonnegative coefficients, the crucial difficulty is showing that the linear term is indeed nonnegative. Unlike the case of zigzags, which come from a skew shape, the computation of the linear term of the crown (which falls into the class of \emph{cylindric skew shapes}) is more subtle and requires a complicated determinantal formula (see \cite[Section 7]{ferroni2025skew}). By using our recursions, we can give a simpler proof of this nonnegativity:

\begin{cor} \label{linear-coefficient}
    For every $n \ge 1$ the linear coefficient of $\Omega_{\crown_{2n}}(t)$ is given by 
    \begin{equation}
    \label{eq-linear-coefficient}
        [t] \Omega_{\mathcal{C}_{2n}}(t) = \binom{2n}{n}^{-1} \quad .
    \end{equation}
\end{cor}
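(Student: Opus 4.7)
My plan is to extract $[t]\Omega_{\mathcal{C}_{2n}}(t)$ from the recursion \eqref{recursion-order-pol2} by differentiating once and evaluating at $t=0$. Two elementary observations drive everything: for any finite nonempty poset $P$ we have $\Omega_P(0)=0$ (no order-preserving map to the empty set), and $\Omega_P'(0) = [t]\Omega_P(t)$. For brevity, write $L_P \coloneqq [t]\Omega_P(t)$.

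Applying $\frac{d}{dt}\big|_{t=0}$ term by term to \eqref{recursion-order-pol2}, the left-hand side yields $L_{\mathcal{C}_{2n}}$; the term $\Omega_{\mathcal{C}_{2n}}(-t)$ yields $-L_{\mathcal{C}_{2n}}$ by the chain rule; the term $-n\,\Omega_{Z_{2n-1}}(-t)$ yields $n\,L_{Z_{2n-1}}$; the term $t\,\Omega_{Z_{2n-3}}(t)$ yields $\Omega_{Z_{2n-3}}(0)=0$; and the product rule applied to each summand of the indexed sum gives $\Omega'_{Z_a}(0)\,\Omega_{Z_b}(0)-\Omega_{Z_a}(0)\,\Omega'_{Z_b}(0)=0$ (one factor is always the value of an order polynomial at $0$). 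What remains is the clean identity
\[
L_{\mathcal{C}_{2n}} \;=\; -L_{\mathcal{C}_{2n}} + n\,L_{Z_{2n-1}},\qquad\text{i.e.,}\qquad L_{\mathcal{C}_{2n}} \;=\; \tfrac{n}{2}\,L_{Z_{2n-1}}.
\]

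To conclude, I substitute the closed form $L_{Z_{2n-1}} = \tfrac{1}{2n-1}\binom{2n-2}{n-1}^{-1}$ from Lemma~\ref{linear-coefficient-zigzag}, then apply the elementary identity $\binom{2n}{n} = \tfrac{2(2n-1)}{n}\binom{2n-2}{n-1}$, which gives $L_{\mathcal{C}_{2n}} = \binom{2n}{n}^{-1}$ for $n\ge 2$. The case $n=1$ is immediate: $\mathcal{C}_2$ is a two-element chain with $\Omega_{\mathcal{C}_2}(t)=\binom{t+1}{2}$, whose linear coefficient is $\tfrac{1}{2}=\binom{2}{1}^{-1}$. The only delicate point is sign bookkeeping in the chain rule for $\Omega_P(-t)$; working from \eqref{recursion-order-pol2} rather than \eqref{recursion-order-pol} is what makes this approach so short, because the exceptional pair $(i,j)=(1,n)$, whose index $2(n-j+i)-1=1$ would force $Z_1$ (not $1$-graded) into the picture, has already been absorbed into the $t\cdot\Omega_{Z_{2n-3}}(t)$ summand; this is exactly why all remaining summands vanish under a single derivative at $0$ without any case analysis.
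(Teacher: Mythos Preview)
Your proof is correct and is essentially the same as the paper's: both extract the linear coefficient from \eqref{recursion-order-pol2} (your differentiation at $t=0$ is exactly the operation $[t]$), use $\Omega_P(0)=0$ to kill the product terms and the $t\cdot\Omega_{Z_{2n-3}}(t)$ term, and then plug in Lemma~\ref{linear-coefficient-zigzag} together with the identity $\binom{2n}{n}=\tfrac{2(2n-1)}{n}\binom{2n-2}{n-1}$. Your remark about why the exceptional pair $(i,j)=(1,n)$ had to be absorbed before applying reciprocity is accurate and matches the paper's derivation of \eqref{recursion-order-pol2}.
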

\begin{proof}
    If $n = 1$, then $\Omega_{\mathcal{C}_{2}} (t) = t/2 + t^2/2$. Recall that $\Omega_P(0) = 0$ if $P$ is a non-empty poset, that is, $[1] \Omega_P(t) = 0$ if $P \neq \emptyset$. Observe that $[t]\Omega_{\mathcal{C}_{2n}}(-t) = -[t]\Omega_{\mathcal{C}_{2n}}(t)$. Then, by \eqref{recursion-order-pol2}, if $n\geq 2$,
    \begin{align*}
        [t]\Omega_{\mathcal{C}_{2n}}(t) &= [t]\Omega_{\mathcal{C}_{2n}}(-t) - n \cdot [t]\Omega_{Z_{2n-1}}(-t) \implies \\
        [t]\Omega_{\mathcal{C}_{2n}}(t) &= \frac{n}{2} \cdot \frac{1}{(2n-1) \binom{2n-2}{n-1}} = \binom{2n}{n}^{-1}
    \end{align*}
    where Lemma \ref{linear-coefficient-zigzag} was used in the second equality.
\end{proof}

Let $P$ be a poset. An \emph{order ideal} of $P$ is a subset $I$ of $P$ such that if $t \in I$ and $s \leq t$ in $P$, then $s \in I$. Denote by $J(P)$ the distributive lattice of order ideals in $P$ ordered by inclusion and, if $P$ is finite, define the polynomial
\[
    c_{J(P)}(t) \coloneqq \sum_{k \geq 0} c_k (J(P)) t^k,
\]
where $c_k(J(P)) = |\{ \emptyset = I_0 < I_1 < \cdots < I_k = P \}|$. The polynomial $c_{J(P)}(t)$ is known as the \emph{chain polynomial} of $J(P)$.

Set 
\[
    \phi(P) = \sum_{k=1}^{|P|} (-1)^{k-1} \frac{c_k (J(P))}{k}.
\]

The following lemma is a combinatorial description of the coefficients of the order polynomial of a poset.

\begin{lem}\cite[Proposition 2.10]{shareshian2003new}
\label{shareshian-lemma}
    Let $P = ([n], \preceq)$ be a poset. Then
    \[
        [t^k]\Omega_P(t) = \frac{1}{k!} \cdot \sum_{\emptyset \subsetneq I_1 \subsetneq I_2 \subsetneq \cdots \subsetneq I_k = P} \phi(I_1) \cdot \phi(I_2 \setminus I_1) \cdot \ldots \cdot \phi(I_k \setminus I_{k-1}).
    \]
    In particular, $[t]\Omega_P(t) = \phi(P).$
\end{lem}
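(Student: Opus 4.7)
The plan is to expand both sides in a common ``composition basis'' and reduce the statement to a chain-refinement bijection in $J(P)$.

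I would first recall the classical expansion $\Omega_P(t) = \sum_{\ell \geq 1} c_\ell(J(P)) \binom{t}{\ell}$, which follows by partitioning an order-preserving map $f \colon P \to [t]$ according to its image size $\ell$: one chooses the image in $\binom{t}{\ell}$ ways, and surjective order-preserving maps $P \twoheadrightarrow [\ell]$ correspond bijectively to strict chains of length $\ell$ in $J(P)$ via $I_j = f^{-1}([j])$. To extract $[t^k]$, I would apply the standard formula for the signed Stirling numbers of the first kind,
\[
[t^k]\binom{t}{\ell} = \frac{s(\ell, k)}{\ell!} = \frac{1}{k!} \sum_{\substack{j_1 + \cdots + j_k = \ell \\ j_1, \ldots, j_k \geq 1}} \prod_{i=1}^k \frac{(-1)^{j_i-1}}{j_i},
\]
obtained by encoding a permutation of $[\ell]$ with $k$ cycles by its ordered tuple of cycle lengths (the sign $(-1)^{\ell-k}$ distributes as $\prod_i (-1)^{j_i-1}$, since $\sum_i (j_i - 1) = \ell - k$). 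Combining gives
\[
[t^k]\Omega_P(t) = \frac{1}{k!} \sum_{j_1, \ldots, j_k \geq 1} c_{j_1 + \cdots + j_k}(J(P)) \prod_{i=1}^k \frac{(-1)^{j_i-1}}{j_i}.
\]

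On the right-hand side of the lemma, I would expand each factor using $\phi(I_i \setminus I_{i-1}) = \sum_{j_i \geq 1} \frac{(-1)^{j_i-1}}{j_i} c_{j_i}(J(I_i \setminus I_{i-1}))$ and interchange the sum over strict $k$-chains with the sum over tuples $(j_1, \ldots, j_k)$. Matching coefficients against the previous display, the lemma reduces to the single combinatorial identity
\[
\sum_{\emptyset \subsetneq I_1 \subsetneq \cdots \subsetneq I_k = P} \prod_{i=1}^k c_{j_i}\!\bigl(J(I_i \setminus I_{i-1})\bigr) = c_{j_1 + \cdots + j_k}(J(P))
\]
for all positive integers $j_1, \ldots, j_k$. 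I would prove this bijectively: using the canonical isomorphism $[I_{i-1}, I_i]_{J(P)} \cong J(I_i \setminus I_{i-1})$, the left-hand side counts pairs consisting of a strict $k$-chain $\emptyset \subsetneq I_1 \subsetneq \cdots \subsetneq I_k = P$ in $J(P)$ together with a refinement whose $i$-th segment is further subdivided into exactly $j_i$ substeps; conversely, any strict chain $\emptyset = K_0 \subsetneq K_1 \subsetneq \cdots \subsetneq K_N = P$ of length $N = j_1 + \cdots + j_k$ admits a unique such coarsening once the composition $(j_1, \ldots, j_k)$ is fixed, namely $I_i := K_{j_1 + \cdots + j_i}$. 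The specialization $k = 1$ then recovers $[t]\Omega_P(t) = \phi(P)$ immediately. The only nontrivial ingredient here is the Stirling expansion in the second display, which is completely standard; the main thing to watch is that the compositions $(j_1, \ldots, j_k)$ on both sides are matched correctly so that the coefficient comparison is clean, but this is routine bookkeeping rather than a genuine obstacle.
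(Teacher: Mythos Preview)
The paper does not prove this lemma at all: it is quoted from \cite{shareshian2003new} with no argument given. So there is no ``paper's own proof'' to compare against here.

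Your argument is correct. The expansion $\Omega_P(t)=\sum_{\ell\ge 1} c_\ell(J(P))\binom{t}{\ell}$ is the standard surjection count, and the Stirling identity
\[
\frac{s(\ell,k)}{\ell!}=\frac{1}{k!}\sum_{\substack{j_1+\cdots+j_k=\ell\\ j_i\ge 1}}\prod_{i=1}^k\frac{(-1)^{j_i-1}}{j_i}
\]
is exactly the exponential-formula expansion of $\bigl(\log\frac{1}{1-x}\bigr)^k/k!$. The reduction to
\[
\sum_{\emptyset\subsetneq I_1\subsetneq\cdots\subsetneq I_k=P}\ \prod_{i=1}^k c_{j_i}\bigl(J(I_i\setminus I_{i-1})\bigr)=c_{j_1+\cdots+j_k}(J(P))
\]
and the chain-refinement bijection proving it are both fine; the interval isomorphism $[I_{i-1},I_i]_{J(P)}\cong J(I_i\setminus I_{i-1})$ is the key structural fact, and since all $j_i\ge 1$ the coarsening $I_i:=K_{j_1+\cdots+j_i}$ automatically lands in the strict-chain index set.

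One small quibble of wording: when you say ``matching coefficients,'' you are not comparing coefficients in a basis but rather exhibiting the two sides as the same double sum and then proving equality of the inner summand for each fixed tuple $(j_1,\ldots,j_k)$. That is a sufficient (termwise) argument, not a linear-independence argument; you might phrase it as ``comparing summands'' to avoid suggesting more than you use.
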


The importance of Lemma \ref{shareshian-lemma} is the following: given a finite poset $P$, if $\phi(I) \geq 0$ for every order ideal $\emptyset \neq I \subseteq P$ and $\phi(I \setminus J) \geq 0$ for every pair $\emptyset \subsetneq J \subsetneq I \subseteq P$, then the coefficients of $\Omega_P(t)$ are nonnegative. This observation was recently rediscovered by Ferroni, Morales and Panova \cite{ferroni2025skew} as their Meta Theorem.

The next lemma is well-known and the proof is omitted:
\begin{lem}
\label{disjoint-union-posets-order-pol}
    Let $P$ and $Q$ be disjoint finite posets. Then
    \[
        \Omega_{P \sqcup Q}(t) = \Omega_P(t) \cdot \Omega_Q(t).
    \]
\end{lem}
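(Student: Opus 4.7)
The plan is to establish the identity first at positive integers using the combinatorial definition of the order polynomial, and then promote it to an identity of polynomials via the standard infinitely-many-values argument. Recall that for any finite poset $R$ and positive integer $m$, the value $\Omega_R(m)$ counts the order-preserving maps from $R$ to $[m]$. Fixing $m \ge 1$, it therefore suffices to show
\[
    |\mathcal{G}(P \sqcup Q, m)| = |\mathcal{G}(P, m)| \cdot |\mathcal{G}(Q, m)|,
\]
where $\mathcal{G}(R, m)$ denotes the set of order-preserving maps $R \to [m]$.

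The key observation is that the partial order on $P \sqcup Q$ is, by definition, the disjoint union of the two partial orders: $x \preceq y$ in $P \sqcup Q$ if and only if either $x, y \in P$ with $x \preceq y$ in $P$, or $x, y \in Q$ with $x \preceq y$ in $Q$. In particular, no element of $P$ is comparable to any element of $Q$. Consequently, a map $f \colon P \sqcup Q \to [m]$ is order-preserving if and only if both of its restrictions $f|_P$ and $f|_Q$ are order-preserving, and the assignment $f \mapsto (f|_P, f|_Q)$ provides a bijection
\[
    \mathcal{G}(P \sqcup Q, m) \longleftrightarrow \mathcal{G}(P, m) \times \mathcal{G}(Q, m).
\]
Taking cardinalities yields the identity at $m$.

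Since this holds for every positive integer $m$, the two polynomials $\Omega_{P \sqcup Q}(t)$ and $\Omega_P(t) \cdot \Omega_Q(t)$ agree at infinitely many values and must therefore coincide as polynomials in $t$. The hard part is essentially vacuous here: once one unpacks the definition of the disjoint union of posets, the multiplicativity is immediate. The statement is a combinatorial shadow of the geometric identity $\mathcal{O}(P \sqcup Q) = \mathcal{O}(P) \times \mathcal{O}(Q)$ of order polytopes, whose Ehrhart polynomials multiply by Fubini, and this perspective could be used to give an alternative one-line proof via Stanley's identity $L(\mathcal{O}(R), t) = \Omega_R(t+1)$.
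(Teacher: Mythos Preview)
Your proof is correct. The paper actually omits the proof of this lemma entirely, noting only that it is well-known, so your argument simply supplies the standard details.
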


Now, we have all the ingredients to prove the nonnegativity of the coefficients of $\Omega_{\mathcal{C}_{2n}}(t)$:

\begin{cor}
    $[t^k]\Omega_{\mathcal{C}_{2n}}(t) \geq 0$ for every $k \geq 0$, and every $n \geq 1$.
\end{cor}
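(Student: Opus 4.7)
The plan is to invoke Lemma~\ref{shareshian-lemma}, which expresses $[t^k]\Omega_{\crown_{2n}}(t)$ as $\tfrac{1}{k!}$ times a sum over all strict chains $\emptyset \subsetneq I_1 \subsetneq \cdots \subsetneq I_k = \crown_{2n}$ of order ideals, of the product $\phi(I_1) \cdot \phi(I_2 \setminus I_1) \cdots \phi(I_k \setminus I_{k-1})$. Hence it suffices to show that every factor in every such product is nonnegative; this mirrors the ``Meta Theorem'' strategy of Ferroni--Morales--Panova.

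The key structural observation is that the comparability graph of $\crown_{2n}$ is the cycle graph on $2n$ vertices, so the induced subposet on any nonempty proper subset $S \subsetneq \crown_{2n}$ decomposes as a disjoint union $Z_{n_1} \sqcup \cdots \sqcup Z_{n_s}$ of zigzag posets with $s \ge 1$. Indeed, removing one or more vertices from a cycle yields a disjoint union of paths, and each path inherits the zigzag partial order from $\crown_{2n}$. This applies to $I_1$ whenever $I_1 \subsetneq \crown_{2n}$, and also to each difference $I_j \setminus I_{j-1}$ for $j \ge 2$, since in that case $I_{j-1} \supseteq I_1 \ne \emptyset$ guarantees that $\crown_{2n} \setminus (I_j \setminus I_{j-1})$ is nonempty.

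By Lemma~\ref{disjoint-union-posets-order-pol}, $\Omega_{Z_{n_1} \sqcup \cdots \sqcup Z_{n_s}}(t) = \prod_i \Omega_{Z_{n_i}}(t)$; since each zigzag factor has no constant term, the linear coefficient of the product is $0$ whenever $s \ge 2$, and equals $[t]\Omega_{Z_{n_1}}(t) \ge 0$ (by Lemma~\ref{linear-coefficient-zigzag}) when $s = 1$. So every factor of the form $\phi(I_j \setminus I_{j-1})$ is nonnegative, and likewise $\phi(I_1) \ge 0$ whenever $I_1 \ne \crown_{2n}$. The remaining case $I_1 = \crown_{2n}$, which occurs only when $k = 1$, is handled by Corollary~\ref{linear-coefficient}, giving $\phi(\crown_{2n}) = \binom{2n}{n}^{-1} > 0$. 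Summing nonnegative contributions over all chains yields $[t^k]\Omega_{\crown_{2n}}(t) \ge 0$.

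The main obstacle is solely in stating the structural decomposition precisely: that removing any nonempty set of vertices from $\crown_{2n}$ yields a subposet that is a disjoint union of zigzag posets, with each path carrying the induced zigzag order. Once this is in place, the result reduces to bookkeeping with the lemmas already proved earlier in this section.
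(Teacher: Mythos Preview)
Your proposal is correct and follows essentially the same approach as the paper: both invoke Lemma~\ref{shareshian-lemma} (the ``Meta Theorem'' framework), reduce to checking $\phi \ge 0$ on every $I_j \setminus I_{j-1}$, and verify this by noting that each such piece is either $\crown_{2n}$ itself (handled by Corollary~\ref{linear-coefficient}) or a disjoint union of zigzags (handled via Lemmas~\ref{linear-coefficient-zigzag} and~\ref{disjoint-union-posets-order-pol}). Your write-up is simply more explicit than the paper's terse one-sentence appeal to the same structural facts.
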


\begin{proof}
    By definition of the order polynomial, if $P \neq \emptyset$, then $[1] \Omega_P(t) = 0$, and by Corollary \ref{linear-coefficient}, $[t]\Omega_{\mathcal{C}_{2n}}(t) \geq 0$. For $k \geq 2$, the result follows from the fact that any order ideal $I \not\in \{\emptyset, \mathcal{C}_{2n}\}$ of $\mathcal{C}_{2n}$ is a disjoint union of zigzags, just like $J \setminus I$ for every pair $\emptyset \subsetneq I \subsetneq J \subseteq \mathcal{C}_{2n}$.
\end{proof}

\section{The \texorpdfstring{$h^*$}--vector of \texorpdfstring{$\mathcal{O}(\crown_{2n})$}-}
\label{sec-h-vector}
In this section, we study the $h^*$-vector of the order polytope $\mathcal{O}(\mathcal{C}_{2n})$. We give a combinatorial interpretation for the coefficients of the $h^*$-polynomial of $\mathcal{O}(\mathcal{C}_{2n})$. To do this, we study a subset of alternating permutations, called cyclically alternating permutations. We will adapt the methods in \cite{coons2019h} to the cyclically alternating case.

Let $\sigma = \sigma_1 \sigma_2 \cdots \sigma_{2n}\in S_{2n}$ be a permutation. We say that $\sigma$ is a \emph{cyclically alternating permutation} if $\sigma_1 < \sigma_2 > \sigma_3 < \ldots > \sigma_{2n - 1} < \sigma_{2n} > \sigma_1$. The set of cyclically alternating permutations on $[2n]$ is denoted by $\operatorname{CA_{2n}}$, and we set $A_\circ (2n) = |\operatorname{CA_{2n}}|$. For more about cyclically alternating permutations, we refer to \cite{elkies2003sums}. For ease of exposition, we will sometimes refer to cyclically alternating permutations as CAPs.

\begin{defi}
For a CAP $\sigma$, we define the permutation statistic $\operatorname{cswap}(\sigma)$ to be the number of $i < 2n$ such that $\sigma^{-1}(i) > \sigma^{-1}(i+1) + 1$. Equivalently, it is the number of $i < 2n$ such that $i$ is to the right of $i + 1$ in $\sigma$ and swapping $i$ and $i+1$ in $\sigma$ yields a CAP.
\end{defi}

We say that a CAP $\sigma$ \emph{cyclically swaps} to another CAP $\tau$ when $\tau = \sigma\cdot (i, i+1)$ for some $i$ such that $i$ is to the right of $i+1$ in $\sigma$. Note that $\tau$ has one less inversion than $\sigma$. The cyclic swap set $\operatorname{CSwap}(\sigma)$ is the set of $i < 2n$ for which we can perform a cyclic swap in $\sigma$.

\begin{exam}
    For $n=2$, then $\sigma = 2413$ is a CAP, $\sigma^{-1} = 3142$, $\operatorname{cswap}(\sigma) = 2$, and $\operatorname{CSwap}(\sigma) = \{ 1, 3 \}$.
\end{exam}

\begin{rmk}\label{rmk:cyclicswap}
The notion of cyclically swapping is the reverse notion of swapping as introduced in \cite{coons2019h}. That is, if a CAP $\sigma$ cyclically swaps to $\tau$, then $\tau$ swaps to $\sigma$ in the language of \cite{coons2019h}. 
\end{rmk}

Given a permutation $\sigma \in S_n$, $\Delta^\sigma$ denotes the simplex given by the convex hull of $\operatorname{vert}(\sigma) \coloneqq \{v_0^{\sigma}, v_1^{\sigma}, \ldots, v_n^{\sigma}\}$, where $v_0^\sigma = \sum_{i=1}^n e_i$ and $v_i^{\sigma}$ is recursively defined to be $v_i^{\sigma} = v_{i-1}^\sigma - e_{\sigma^{-1}(i)}$.

\begin{prop}[Proposition $2.1$ \cite{coons2019h}]\label{prop:intinfacet}
Let $\sigma$ and $\tau$ be alternating permutations. Then $\Delta^\sigma$ intersects $\Delta^\tau$ in a facet of $\Delta^\sigma$ if and only if $\sigma$ swaps to $\tau$ or $\tau$ swaps to $\sigma$. 
\end{prop}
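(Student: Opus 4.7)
The plan is to identify each simplex by its vertex set and reduce facet-sharing to a combinatorial condition on the permutations. Unwinding the recursion, I see that $v_i^\sigma = \sum_{j : \sigma(j) > i} e_j$ is the indicator vector of the set of positions where $\sigma$ takes values exceeding $i$; in particular $v_0^\sigma = (1,\ldots,1)$ and $v_n^\sigma = (0,\ldots,0)$ are independent of $\sigma$. Since $\{\Delta^\sigma\}$ forms the canonical unimodular triangulation of the order polytope, the intersection $\Delta^\sigma \cap \Delta^\tau$ is a common face of both simplices whose vertex set is $\operatorname{vert}(\sigma) \cap \operatorname{vert}(\tau)$. Hence $\Delta^\sigma \cap \Delta^\tau$ is a facet of $\Delta^\sigma$ if and only if $|\operatorname{vert}(\sigma) \cap \operatorname{vert}(\tau)| = n$.

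Next, I would analyze what this common-vertex condition says combinatorially. Since $v_0$ and $v_n$ are always shared, if the simplices share a facet, there is a unique $k \in \{1,\ldots,n-1\}$ with $v_k^\sigma \neq v_k^\tau$, while $v_{k-1}^\sigma = v_{k-1}^\tau$ and $v_{k+1}^\sigma = v_{k+1}^\tau$. These two equalities say that the set of positions of the values $\{k,k+1,\ldots,n\}$ in $\sigma$ equals that in $\tau$, and likewise for $\{k+2,\ldots,n\}$. Taking the set-difference forces the unordered pair of positions occupied by $\{k,k+1\}$ to coincide in $\sigma$ and $\tau$, while $v_k^\sigma \neq v_k^\tau$ forces the individual positions of $k$ and $k+1$ to be exchanged. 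Thus $\tau = \sigma \cdot (k,k+1)$. Conversely, if $\tau$ arises from an alternating $\sigma$ by swapping the positions of adjacent values $k$ and $k+1$, a direct check on indicator vectors yields $v_i^\sigma = v_i^\tau$ for all $i \neq k$, so the simplices share a facet.

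Finally, I would translate the adjacent-value transposition $\tau = \sigma \cdot (k,k+1)$ into the swap language: either $k$ stands to the left of $k+1$ in $\sigma$, which is precisely the condition that $\sigma$ swaps to $\tau$ in the sense of \cite{coons2019h} (equivalently, $\tau$ cyclically swaps to $\sigma$ by Remark~\ref{rmk:cyclicswap}), or $k$ stands to the right of $k+1$ in $\sigma$, which is the condition that $\tau$ swaps to $\sigma$. The main obstacle is the opening geometric step: one needs the fact that the simplices $\Delta^\sigma$ indexed by linear extensions assemble into a bona fide simplicial complex, so that pairwise intersections are recoverable from vertex data. This is part of Stanley's construction of the canonical triangulation and is standard once invoked; the remaining steps are direct bookkeeping on indicator vectors.
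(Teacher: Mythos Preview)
The paper does not supply its own proof of this proposition; it is quoted from Coons--Sullivant \cite{coons2019h} and used as a black box. Your reconstruction is correct and is essentially the standard argument: identify $v_i^\sigma$ as the indicator vector of $\{j:\sigma(j)>i\}$, use that the canonical triangulation is a genuine simplicial complex so that $\Delta^\sigma\cap\Delta^\tau$ is the simplex on $\operatorname{vert}(\sigma)\cap\operatorname{vert}(\tau)$, and then observe that agreement of all vertices except $v_k$ forces $\sigma$ and $\tau$ to agree on all values except that the positions of $k$ and $k+1$ are exchanged.

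One point you pass over that is worth making explicit: you conclude with the dichotomy ``$k$ to the left of $k+1$ in $\sigma$'' versus ``$k$ to the right of $k+1$ in $\sigma$,'' but the Coons--Sullivant swap (and its reverse, the cyclic swap) additionally requires that the result remain alternating, which in practice means $k$ and $k+1$ are not at adjacent positions. This is automatic here: if $k$ and $k+1$ occupied consecutive positions in $\sigma$, then exchanging them would flip the ascent/descent at that position and $\tau$ could not be alternating, contradicting the hypothesis. So the adjacent-value transposition you produce is genuinely a swap in the required sense, and the equivalence is complete.
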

As a result of Proposition~\ref{prop:intinfacet} and Remark~\ref{rmk:cyclicswap}, we see that, for two CAPs $\sigma$ and $\tau$, we have $\Delta^\sigma$ intersects $\Delta^\tau$ in a facet of $\Delta^\sigma$ if and only if $\sigma$ cyclically swaps to $\tau$ or vice versa. \par
Let $i < j$. We call $(i, j)$ an \emph{inversion} of a permutation $\sigma$ if $\sigma_i > \sigma_j$. The \emph{inversion number} of $\sigma$, denoted $\operatorname{inv}(\sigma)$, is the number of inversions $(i, j)$ of $\sigma$. Similarly, a $\emph{non-inversion}$ of $\sigma$ is a pair $(i, j)$ with $i < j$ such that $\sigma_i < \sigma_j$. The pair $(i,j)$ is called \emph{relevant} if $i < j - 1$.
\begin{lem}\label{lem:swapexists}
Let $\sigma$ be a CAP. If $(\sigma^{-1}(a), \sigma^{-1}(b))$ is a relevant inversion of $\sigma$, then there exists $k$ with $b \leq k < a$ such that $k \in \operatorname{CSwap}(\sigma)$. 
\end{lem}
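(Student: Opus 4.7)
The plan is to proceed by induction on the value-gap $a - b$, where the relevant-inversion hypothesis gives $a > b$ together with $\sigma^{-1}(a) + 1 < \sigma^{-1}(b)$. In the base case $a - b = 1$ the only candidate is $k = b$ itself, and the required inequality $\sigma^{-1}(b) > \sigma^{-1}(b+1) + 1$ is exactly the relevance hypothesis, so $b \in \operatorname{CSwap}(\sigma)$.

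For the inductive step $a - b \geq 2$ I split into three cases according to where $a - 1$ sits relative to $a$. If $\sigma^{-1}(a-1) > \sigma^{-1}(a) + 1$, then by definition $a - 1$ already lies in $\operatorname{CSwap}(\sigma)$, and $a - 1 \in [b, a-1]$ since $a - 1 \geq b + 1$, finishing the case at once. If instead $\sigma^{-1}(a-1) < \sigma^{-1}(a)$, then
\[
\sigma^{-1}(a-1) \leq \sigma^{-1}(a) - 1 \leq \sigma^{-1}(b) - 3,
\]
so $(\sigma^{-1}(a-1), \sigma^{-1}(b))$ is still a relevant inversion, now with strictly smaller value-gap $(a-1) - b$, and the induction hypothesis produces the desired $k \in [b, a-2] \subseteq [b, a-1]$.

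The delicate case is $\sigma^{-1}(a-1) = \sigma^{-1}(a) + 1$. Writing $p = \sigma^{-1}(a)$, I still have $\sigma^{-1}(b) \geq p + 2$, and the goal is to upgrade this to $\sigma^{-1}(b) > p + 2$ so that $(\sigma^{-1}(a-1), \sigma^{-1}(b))$ is a relevant inversion and the previous argument applies. This is the only place where the cyclically alternating hypothesis is actually needed: if $\sigma^{-1}(b) = p + 2$, then the three linearly consecutive positions $p, p+1, p+2$ carry the values $a, a-1, b$, which is strictly decreasing because $a \geq b + 2$, contradicting the alternation pattern $\sigma_p < \sigma_{p+1} > \sigma_{p+2}$ or $\sigma_p > \sigma_{p+1} < \sigma_{p+2}$ imposed at any three linearly consecutive positions of a CAP. (The edge cases $p \in \{2n-1, 2n\}$ do not occur here: each of them would force $\sigma^{-1}(b) > 2n$, which is impossible.) I expect this Case 3 to be the only real obstacle; the rest of the argument is bookkeeping on positions and repeated applications of the induction hypothesis.
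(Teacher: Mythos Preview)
Your argument is correct and follows essentially the same strategy as the paper's: induction on the value gap $a-b$, a three-way case split on the position of an adjacent value, and the use of the alternation pattern to rule out a monotone triple at three consecutive positions. The only difference is that you pivot on $a-1$ (moving the larger value down) whereas the paper pivots on $b+1$ (moving the smaller value up); this is the mirror-image of the paper's proof and goes through for the same reasons, with the minor twist that your Case~3 finishes via the induction hypothesis rather than by exhibiting $b\in\operatorname{CSwap}(\sigma)$ directly.
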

\begin{proof}
We proceed by induction on $a-b$. 
If $a-b=1$, then $\sigma^{-1}(a) = \sigma^{-1}(b+1) < \sigma^{-1}(b) -1 $ gives $b \in \operatorname{CSwap}(\sigma)$.
So, let us assume the result true for $a-b \geq 1$ and let us prove it for $a - b + 1$. We look at $b+1$ in $\sigma$. There are three possible cases: 
\begin{enumerate}
\item if $\sigma^{-1}(a) < \sigma^{-1}(b+1) -1$, then $(\sigma^{-1}(a), \sigma^{-1}(b+1))$ is a relevant inversion (recall that $a > b + 1$). Since $a-(b+1)< a-b$, there exists $b \leq b + 1 \leq k < a$ such that $k \in \operatorname{CSwap}(\sigma)$ by induction; 
\item if $\sigma^{-1}(a) > \sigma^{-1}(b+1)-1$, then we claim that $\sigma^{-1}(a) > \sigma^{-1}(b+1)$. In fact, if $\sigma^{-1}(a) = \sigma^{-1}(b+1)$, then $a = b + 1$ , but, by hypothesis, $a > b + 1$. So, $\sigma^{-1}(a) > \sigma^{-1}(b+1)$, which implies $\sigma^{-1}(b+1) + 1 \leq \sigma^{-1}(a) < \sigma^{-1}(b)$ and gives $b \in \operatorname{CSwap}(\sigma)$;
\item if $\sigma^{-1}(a) = \sigma^{-1}(b+1)-1$, then we claim that $\sigma^{-1}(b) > \sigma^{-1}(b+1) +1$. In fact, if $\sigma^{-1}(b) \leq \sigma^{-1}(b+1) +1$, then we have three cases:
\begin{itemize}
    \item if $\sigma^{-1}(b) = \sigma^{-1}(b+1)$, then $b = b+1$, since $\sigma$ is a bijection, so this case cannot happen;
    \item if $\sigma^{-1}(b) = \sigma^{-1}(b+1) +1$, then $\sigma^{-1}(a) < \sigma^{-1}(b+1) < \sigma^{-1}(b)$, but $\sigma^{-1}(a) = \sigma^{-1}(b+1)-1$ and $\sigma^{-1}(b) = \sigma^{-1}(b+1) +1$ imply that $a, b+1, b$ is a decreasing consecutive sequence in $\sigma$, which contradicts that $\sigma$ is a CAP;
    \item if $\sigma^{-1}(b) < \sigma^{-1}(b+1)$, then $\sigma^{-1}(a) < \sigma^{-1}(b) < \sigma^{-1}(b+1)$, but $\sigma^{-1}(a) = \sigma^{-1}(b+1)-1$ implies that $\sigma^{-1}(a) < \sigma^{-1}(b) < \sigma^{-1}(a) +1$, which is a contradiction.
\end{itemize}
Hence, $\sigma^{-1}(b) > \sigma^{-1}(b+1) +1$ and it gives $b \in \operatorname{CSwap}(\sigma)$, as needed.
\end{enumerate}
\end{proof}

Let $\sigma$ be a cyclically alternating permutation. We define 
$$E(\sigma) := \{v \colon v \in  \vrt(\sigma) \setminus \vrt(\tau) \text{ for some CAP } \tau \text{ that } \sigma \text{ cyclically swaps to} \}.$$

\begin{prop}\label{prop:mininversion}
A cyclically alternating permutation $\sigma$ minimizes the inversion number of all cyclically alternating permutations $\tau$ with $E(\sigma) \subseteq \operatorname{vert}(\tau)$. 
\end{prop}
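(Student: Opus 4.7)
The approach is the contrapositive: given a CAP $\tau$ with $\operatorname{inv}(\tau) < \operatorname{inv}(\sigma)$, I will exhibit some $k \in \operatorname{CSwap}(\sigma)$ such that $v_k^\sigma \notin \vrt(\tau)$. Since $v_k^\sigma \in \vrt(\tau)$ holds precisely when $\sigma^{-1}(\{1,\dots,k\}) = \tau^{-1}(\{1,\dots,k\})$, the task reduces to locating a cyclic-swap level $k$ at which these two sets of positions disagree.

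Since $\operatorname{inv}(\tau) < \operatorname{inv}(\sigma)$, some pair of values $(a,b)$ with $a > b$ forms an inversion of $\sigma$ but not of $\tau$; I fix one minimizing $a - b$. Minimality yields a positional restriction: no value $c \in (b,a)$ satisfies $\sigma^{-1}(a) < \sigma^{-1}(c) < \sigma^{-1}(b)$, because otherwise both $(a,c)$ and $(c,b)$ would be inversions of $\sigma$ with strictly smaller value difference, hence inversions of $\tau$ by minimality, and chaining them would force $(a,b)$ to be an inversion of $\tau$ --- a contradiction.

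The main obstacle is showing that $\sigma^{-1}(a) < \sigma^{-1}(b)-1$, i.e., that $(\sigma^{-1}(a), \sigma^{-1}(b))$ is a \emph{relevant} inversion. If $a$ were immediately followed by $b$ in $\sigma$ at positions $(p,p+1)$, the CAP descent rule forces $p$ to be even; I plan to combine the minimality of $a-b$ with the alternating structure of $\tau$ around the positions of $a$ and $b$ in $\tau$ (handling the two cases depending on whether $b+1$ lies to the left or to the right of $a$ in $\sigma$) to derive a contradiction with the non-inversion of $(a,b)$ in $\tau$.

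Once relevance is established, Lemma~\ref{lem:swapexists} produces $k \in \operatorname{CSwap}(\sigma)$ with $b \le k < a$, so $v_k^\sigma \in E(\sigma)$. To conclude $v_k^\sigma \notin \vrt(\tau)$, I assume for contradiction $\sigma^{-1}(\{1,\dots,k\}) = \tau^{-1}(\{1,\dots,k\})$: this forces $\tau(\sigma^{-1}(a)) > k$ and $\tau(\sigma^{-1}(b)) \le k$. Setting $c = \sigma(\tau^{-1}(b))$ and $d = \sigma(\tau^{-1}(a))$ and invoking the positional restriction from the second paragraph, I extract a new pair of values that is an inversion of $\sigma$ but not of $\tau$ with strictly smaller value difference than $a - b$, contradicting the minimal choice of $(a,b)$.
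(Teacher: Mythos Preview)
Your step 4 is a real gap, not just a missing detail. The claim that the value-pair $(a,b)$ minimizing $a-b$ among inversions of $\sigma$ that are non-inversions of $\tau$ must be a \emph{relevant} inversion of $\sigma$ is false. Take $n=3$, $\sigma = 153624$ and $\tau = 132546$ (both CAPs, with $\operatorname{inv}(\sigma)=6>2=\operatorname{inv}(\tau)$). The value-pair inversions of $\sigma$ not of $\tau$ are $(5,3),(5,2),(6,2),(6,4)$, and among the two minimal pairs (difference $2$) the pair $(5,3)$ sits at positions $(2,3)$ in $\sigma$, hence is non-relevant. Your sketched ``contradiction with the non-inversion of $(a,b)$ in $\tau$'' cannot materialize here: in $\tau$ the values $3$ and $5$ are at positions $2$ and $4$, perfectly consistent with everything you have assumed. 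So the plan to \emph{prove} relevance of the minimal pair cannot succeed; at best you could try to \emph{re-choose} $(a,b)$, but then you lose the minimality you rely on in steps~3 and~6.

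The underlying problem is working with value pairs. If you switch to position pairs $(i,j)$, the difficulty evaporates: for CAPs the non-relevant (adjacent-position) inversions are exactly the pairs $(2\ell,2\ell+1)$, the same set for $\sigma$ and $\tau$, so any position-pair inversion of $\sigma$ that is not an inversion of $\tau$ is automatically relevant. Lemma~\ref{lem:swapexists} then yields $k\in\operatorname{CSwap}(\sigma)$ with $\sigma(j)\le k<\sigma(i)$, and $v_k^\sigma\in\operatorname{vert}(\tau)$ would force $\tau(i)>k\ge\tau(j)$, making $(i,j)$ an inversion of $\tau$---an immediate contradiction with no minimality argument needed. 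This is essentially the paper's proof: it observes that every relevant (position-pair) inversion of $\sigma$ is readable from some $v_k^\sigma\in E(\sigma)$, and any such $v_k^\sigma$ shared with $\tau$ forces the same position pair to be an inversion of $\tau$. Your contrapositive framing and the detour through value-pair minimality add complications (and the gap above) without buying anything.
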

\begin{proof}
Let $v_k^{\sigma} \in \operatorname{vert}(\sigma)$. By definition, $v_k^{\sigma}$ is the vector $(1, \ldots, 1)$ with a $1$ removed from the coordinate at the $\sigma^{-1}(\ell)$th position for all $\ell \leq k$. We can read off inversions $(i, j)$ of $\sigma$ with $\sigma(j) \leq k < \sigma(i)$ from $v_k^{\sigma}$ as pairs where the $i$th component of $v_k^{\sigma}$ is $1$ and the $j$th component is $0$. \par
Now we claim that every relevant inversion of $\sigma$ can be read from an element of $E(\sigma)$ as above. Given a relevant inversion $(i, j)$ of $\sigma$, we know, by Lemma~\ref{lem:swapexists}, that there exists a cyclic swap $k$ with $\sigma(j) \leq k < \sigma(i)$ and $(i, j)$ can be read from $v_k^{\sigma}$. Since $k$ is a cyclic swap, $v_k^{\sigma}$ is the opposite vertex of the intersection of $\Delta^\sigma$ and $\Delta^\tau$ for the CAP $\tau$ that $\sigma$ cyclically swaps to when swapping $k$ and $(k + 1)$. Therefore, all relevant inversions of $\sigma$ can be found as non-adjacent $1-0$ pairs in a vertex in $E(\sigma)$. This means we can count the number of relevant inversions of $\sigma$ from vertices in $E(\sigma)$. If $E(\sigma) \subseteq \operatorname{vert}(\tau)$, then all inversions of $\sigma$ are inversions in $\tau$, though $\tau$ could have more. Therefore, $\sigma$ minimizes inversions over all CAP $\tau$ with $E(\sigma) \subseteq \operatorname{vert}(\tau)$. 
\end{proof}
\begin{prop}\label{prop:uniqueminperm}
Let $\sigma$ be a cyclically alternating permutation. If $\sigma^\prime$ minimizes inversion number in the cyclically alternating permutations $\tau$ satisfying $E(\sigma) \subseteq vert(\tau)$, then $\sigma = \sigma^\prime$.
\end{prop}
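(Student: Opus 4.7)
The plan is to assume $\sigma'$ is such a minimizer and show that $\sigma$ and $\sigma'$ share the same inversion set, from which $\sigma = \sigma'$ follows immediately, since any permutation is determined by its set of inversions. By Proposition~\ref{prop:mininversion}, $\sigma$ itself minimizes $\operatorname{inv}$ over all CAPs containing $E(\sigma)$ in their vertex set, so automatically $\operatorname{inv}(\sigma) = \operatorname{inv}(\sigma')$.

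First I would establish the inclusion $\operatorname{RelInv}(\sigma) \subseteq \operatorname{Inv}(\sigma')$. Given a relevant inversion $(p,q) = (\sigma^{-1}(a), \sigma^{-1}(b))$ of $\sigma$ with $a > b$ and $p < q - 1$, Lemma~\ref{lem:swapexists} supplies some $k \in \CSwap(\sigma)$ with $b \le k < a$, so $v_k^{\sigma} \in E(\sigma) \subseteq \vrt(\sigma')$. Because $v_k^{\pi}$ is the unique vertex of $\Delta^{\pi}$ with exactly $k$ zero-coordinates, this containment forces $v_k^{\sigma} = v_k^{\sigma'}$. Reading the $1$ at position $p$ and the $0$ at position $q$ of this common vertex yields $\sigma'_p > k \ge \sigma'_q$, so $(p,q)$ is an inversion of $\sigma'$; and it remains a \emph{relevant} inversion of $\sigma'$ because the condition $p < q - 1$ depends only on the indices.

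The second step is a counting argument. Every CAP on $[2n]$ has its descents at the even positions $2, 4, \ldots, 2n-2$, so its set of adjacent inversions is the fixed set $\{(2,3), (4,5), \ldots, (2n-2, 2n-1)\}$ of cardinality $n-1$; in particular $\sigma$ and $\sigma'$ have identical adjacent-inversion sets. Together with $\operatorname{inv}(\sigma) = \operatorname{inv}(\sigma')$, this forces $|\operatorname{RelInv}(\sigma)| = |\operatorname{RelInv}(\sigma')|$, so the inclusion from the previous paragraph promotes to equality. Hence $\operatorname{Inv}(\sigma) = \operatorname{Inv}(\sigma')$, and therefore $\sigma = \sigma'$.

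The main obstacle, such as it is, is the refinement from ``$(p,q)$ is an inversion of $\sigma'$'' to ``$(p,q)$ is a relevant inversion of $\sigma'$'', which I plan to handle by observing that relevance is a property of the index pair alone. All the real work has already been done by Lemma~\ref{lem:swapexists} and Proposition~\ref{prop:mininversion}; the new ingredient is the observation that all CAPs on $[2n]$ share the same adjacent-inversion positions, which is immediate from the definition of a cyclically alternating permutation.
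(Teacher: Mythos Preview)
Your argument is correct and is precisely the expansion of the paper's one-line proof, which reads in its entirety ``Follows from the proof of Proposition~\ref{prop:mininversion}.'' The content of that sentence is exactly what you wrote: from the proof of Proposition~\ref{prop:mininversion} every relevant inversion of $\sigma$ is witnessed by some $v_k^\sigma \in E(\sigma)$ and hence is a (relevant) inversion of any $\tau$ with $E(\sigma)\subseteq\vrt(\tau)$; since all CAPs share the same set of adjacent inversions and $\operatorname{inv}(\sigma)=\operatorname{inv}(\sigma')$, the inversion sets coincide and $\sigma=\sigma'$. Your write-up is more explicit than the paper's (in particular you make the ``common adjacent inversions'' observation overt, which the paper's proof of Proposition~\ref{prop:mininversion} uses implicitly in the phrase ``all inversions of $\sigma$ are inversions in $\tau$''), but there is no genuine difference in approach.
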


\begin{proof} 
    Follows from the proof of Proposition~\ref{prop:mininversion}.
\end{proof}

We quickly recall the notion of a shelling.
Let $\Lambda$ be the collection of maximal simplices in a pure simplicial complex of dimension $d$ with $|\Lambda| = s$. An ordering $\Delta^1, \Delta^2, \ldots, \Delta^s$ on the simplices in $\Lambda$ is a \emph{shelling order} if, for all $1 <r \leq s$,
$$
\bigcup_{i=1}^{r-1} (\Delta^i \cap \Delta^r)
$$
is a union of facets of $\Delta_r$.

\begin{thm}
\label{shelling}
Let $\sigma_1, \sigma_2, \ldots \sigma_{A_\circ(2n)}$ be an ordering of the cyclically alternating permutations so that $\operatorname{inv}(\sigma_i) \leq \operatorname{inv}(\sigma_j)$ when $i < j$. Then the associated order on $\Delta^{\sigma_1}, \Delta^{\sigma_2}, \ldots, \Delta^{\sigma_{A_\circ(2n)}}$ is a shelling order of the canonical triangulation of $\mathcal{O}({\mathcal{C}_{2n}})$.  
\end{thm}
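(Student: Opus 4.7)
The plan is to mirror the zigzag shelling argument from Coons--Sullivant, using the cyclically alternating analogue of the facet-intersection dictionary already assembled in the excerpt. Fix $r > 1$ and write $U_r := \bigcup_{i<r} (\Delta^{\sigma_i} \cap \Delta^{\sigma_r})$; I need to exhibit $U_r$ as a pure union of facets of $\Delta^{\sigma_r}$. By the corollary of Proposition~\ref{prop:intinfacet} stated after Remark~\ref{rmk:cyclicswap}, two distinct CAPs $\sigma,\tau$ produce simplices meeting in a facet iff one cyclically swaps to the other; because each cyclic swap strictly decreases the inversion number and $i<r$ forces $\operatorname{inv}(\sigma_i) \le \operatorname{inv}(\sigma_r)$, such a facet intersection with $i<r$ must arise from $\sigma_r$ cyclically swapping down to $\sigma_i$. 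Hence the facets of $\Delta^{\sigma_r}$ that are present in some earlier simplex are exactly the sets $\operatorname{vert}(\sigma_r)\setminus\{v\}$ for $v \in E(\sigma_r)$, and each such facet genuinely lies in some $\Delta^{\sigma_{i'}}$ with $i'<r$.

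To finish the shelling condition I then need the reverse inclusion: every intersection $F = \Delta^{\sigma_i}\cap\Delta^{\sigma_r}$ with $i<r$ (not just the facet ones) is contained in one of the shelling facets $\operatorname{vert}(\sigma_r)\setminus\{v\}$. Since the canonical triangulation is an honest simplicial complex, $F$ is the face of $\Delta^{\sigma_r}$ spanned by $\operatorname{vert}(\sigma_i)\cap\operatorname{vert}(\sigma_r)$, so it suffices to produce some $v \in E(\sigma_r)\setminus\operatorname{vert}(\sigma_i)$. Assuming for contradiction that $E(\sigma_r)\subseteq\operatorname{vert}(\sigma_i)$, Proposition~\ref{prop:mininversion} applied to $\sigma_r$ gives $\operatorname{inv}(\sigma_r)\le\operatorname{inv}(\sigma_i)$; combined with the ordering hypothesis $\operatorname{inv}(\sigma_i)\le\operatorname{inv}(\sigma_r)$ we get equality, and Proposition~\ref{prop:uniqueminperm} then forces $\sigma_i=\sigma_r$, contradicting $i<r$.

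Putting the two halves together yields $U_r = \bigcup_{v \in E(\sigma_r)} \bigl(\operatorname{vert}(\sigma_r)\setminus\{v\}\bigr)$, a pure union of facets of $\Delta^{\sigma_r}$, which is exactly the shelling condition. The main obstacle is not the first paragraph, which merely unpacks the facet-intersection dictionary, but the second: the substance is hidden inside Propositions~\ref{prop:mininversion} and \ref{prop:uniqueminperm}, whose proofs in turn rest on Lemma~\ref{lem:swapexists} ensuring that every relevant inversion of a CAP $\sigma$ is witnessed by some cyclic swap and hence by some vertex of $E(\sigma)$. Once those structural results are in hand the shelling statement reduces to the short contradiction argument above.
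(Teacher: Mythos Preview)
Your argument is correct and follows essentially the same route as the paper: both reduce the shelling condition to showing $E(\sigma_r)\not\subseteq\operatorname{vert}(\sigma_i)$ whenever $i<r$, and both derive this from Propositions~\ref{prop:mininversion} and~\ref{prop:uniqueminperm} via the same contradiction/contrapositive argument. Your presentation is slightly more explicit in separating the two inclusions for $U_r$, but the substance is identical.
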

\begin{proof}
We first show that it suffices to prove that if $\operatorname{inv}(\sigma) \geq \operatorname{inv}(\tau)$ (that is, $\Delta^\sigma$ comes after $\Delta^\tau$ in the shelling order), then $E(\sigma) \not \subset \operatorname{vert}(\tau)$. Towards this goal, let $\operatorname{inv}(\sigma) \geq \operatorname{inv}(\rho)$. By Proposition~\ref{prop:intinfacet}, $\Delta^\sigma \cap \Delta^\rho$ is a facet of $\Delta^\sigma$ if and only if $\rho$ swaps to $\sigma$ (since $\rho$ has less inversions and a swap increases inversions by $1$), which happens if and only if $\Delta^\sigma \cap \Delta^\rho = \Delta^{\sigma \setminus \{v_i\}}$ for $v_i \in E(\sigma)$. If $\Delta^\sigma \cap \Delta^\tau \not \subset \Delta^\sigma \cap \Delta^\rho$ for any $\rho$ with $\operatorname{inv}(\sigma) \geq \operatorname{inv}(\rho)$ and $\Delta^\sigma \cap \Delta^\rho$ a facet of $\Delta^\sigma$, then $E(\sigma) \subseteq \operatorname{vert}(\tau)$. The contrapositive of this last sentence says, if $E(\sigma) \not \subseteq \operatorname{vert}(\tau)$, then the given order is a shelling order. \par
If $\operatorname{inv}(\sigma) > \operatorname{inv}(\tau)$, then, since $\sigma$ minimizes the inversion number over all cyclically alternating permutations that contain $E(\sigma)$, by Proposition~\ref{prop:mininversion}, we have $E(\sigma) \not \subset \operatorname{vert}(\tau)$. If $\operatorname{inv}(\sigma) = \operatorname{inv}(\tau)$, then, by Proposition~\ref{prop:uniqueminperm}, $E(\sigma) \not \subset \operatorname{vert}(\tau)$, because $\sigma$ is the unique permutation that minimizes inversion number of all cyclically alternating permutations that contain $E(\sigma)$. 
\end{proof}

Let $P = ([n], \preceq)$ be a poset. The \emph{Jordan–H\"{o}lder set} of $P$ is defined as
\[
\operatorname{JH}(P) \coloneqq \{ \sigma = \sigma_1 \sigma_2 \cdots \sigma_n \in S_n \colon \mbox{ if } \sigma_i \preceq \sigma_j, \mbox{ then } i \leq j \mbox{ for all } i,j \in [n]\},
\]
that is, $\operatorname{JH}(P)$ is the set of all linear extensions of $P$.

The $h^*$-vector of $\mathcal{O}(P)$ can be computed in terms of a permutation statistic on the Jordan-H\"{o}lder set of permutations. We say that a poset $P = ([n], \preceq)$ is \emph{naturally labeled} if $i \prec j$ in $P$ implies $i < j$ as integers.

We say that $i$ is a \emph{descent} of a permutation $\sigma \in S_n$ if $\sigma_i > \sigma_{i+1}$. The \emph{descent set} of $\sigma$, denoted $\operatorname{Des}(\sigma)$, is the set of all descents of $\sigma$. The \emph{descent number} of $\sigma$, denoted $\operatorname{des}(\sigma)$, is the number of elements of $\operatorname{Des}(\sigma)$.

\begin{thm}[Theorem $6.3.11$ \cite{beck2018combinatorial}]
    \label{thm:JHstatistic}
    Let $P$ be a naturally labeled poset. The $h^{*}$-polynomial of $\mathcal{O}(P)$ is given by
    $$
        h^{*}({\mathcal{O}(P)},t) =  \sum_{\sigma \in \operatorname{JH}(P)} t^{\operatorname{des}(\sigma)}. 
    $$
\end{thm}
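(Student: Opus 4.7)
The plan is to use the canonical unimodular triangulation of $\mathcal{O}(P)$ introduced in Section~\ref{section:preliminaries} and analyze the $h^*$-polynomial through its shelling, in the same spirit as Theorem~\ref{shelling}. Since the canonical triangulation is unimodular, a general fact from Ehrhart theory (via the formula relating the Ehrhart series of a lattice polytope to the $h$-vector of a unimodular triangulation) gives
\[
h^*(\mathcal{O}(P),t) = \sum_{\Delta^\sigma} t^{|R(\Delta^\sigma)|},
\]
where the sum runs over the maximal simplices of any shelling of the canonical triangulation and $R(\Delta^\sigma)$ denotes the restriction set of $\Delta^\sigma$. It therefore suffices to exhibit a shelling for which $|R(\Delta^\sigma)| = \operatorname{des}(\sigma)$ for every $\sigma \in \operatorname{JH}(P)$.

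First, I would order the maximal simplices $\{\Delta^\sigma\}_{\sigma \in \operatorname{JH}(P)}$ by weakly increasing inversion number $\operatorname{inv}(\sigma)$, breaking ties arbitrarily. Mirroring the argument in Theorem~\ref{shelling}, the crucial ingredient is an analogue of Proposition~\ref{prop:intinfacet}: two simplices $\Delta^\sigma$ and $\Delta^\tau$ meet in a facet of $\Delta^\sigma$ if and only if $\tau$ is obtained from $\sigma$ by swapping two consecutive entries $\sigma(i), \sigma(i+1)$ that are incomparable in $P$. Because $P$ is naturally labeled, if $i \in \operatorname{Des}(\sigma)$, then $\sigma(i) > \sigma(i+1)$ as integers, which forces $\sigma(i)$ and $\sigma(i+1)$ to be incomparable in $P$ (otherwise natural labeling would demand $\sigma(i) < \sigma(i+1)$), and the resulting swap produces a linear extension $\tau$ with one fewer inversion, hence appearing earlier in the shelling order. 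Conversely, an ascent cannot be swapped into an earlier simplex.

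From this dictionary, the restriction set $R(\Delta^\sigma)$ consists precisely of those vertices $v_i^\sigma$ for which the opposite facet $F_i^\sigma$ is shared with some previously shelled simplex, and by the preceding paragraph these are in bijection with the descents of $\sigma$. Verifying that the chosen ordering is indeed a shelling proceeds exactly as in the proof of Theorem~\ref{shelling}: one shows that if $\operatorname{inv}(\sigma) \geq \operatorname{inv}(\tau)$ and $\Delta^\sigma \cap \Delta^\tau$ is a facet of $\Delta^\sigma$, then this facet is contained in $\Delta^\sigma \cap \Delta^\rho$ for some earlier $\rho$, using the inversion-reducing swap analysis. Combining the shelling with the restriction-set description yields the claimed formula.

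The main obstacle is the careful verification that, under the natural labeling hypothesis, the facets of $\Delta^\sigma$ shared with earlier simplices are in exact bijection with $\operatorname{Des}(\sigma)$. This requires ruling out the boundary facets (those on $\partial \mathcal{O}(P)$) as contributing to the restriction set, and simultaneously ensuring that no ascent can mimic a descent. The natural labeling is essential here: without it, one could have a descent at position $i$ where $\sigma(i)$ covers $\sigma(i+1)$ in $P$, which would prevent the swap and break the count. Once this combinatorial dictionary is established, the theorem follows immediately from the $h$-polynomial formula for shellable unimodular triangulations.
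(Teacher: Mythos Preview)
The paper does not give a proof of this statement; it is quoted verbatim as Theorem~6.3.11 of \cite{beck2018combinatorial} and used as a black box. There is therefore nothing in the paper to compare your argument against.

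That said, your approach is the standard one and is essentially correct: ordering the simplices of the canonical triangulation by weakly increasing $\operatorname{inv}(\sigma)$ yields a shelling, and under a natural labeling the restriction set of $\Delta^\sigma$ is in bijection with $\operatorname{Des}(\sigma)$. One point is phrased imprecisely. In your penultimate paragraph you write that the shelling is verified by showing ``if $\operatorname{inv}(\sigma) \ge \operatorname{inv}(\tau)$ and $\Delta^\sigma \cap \Delta^\tau$ is a facet of $\Delta^\sigma$, then this facet is contained in $\Delta^\sigma \cap \Delta^\rho$ for some earlier $\rho$''; that is vacuous (take $\rho = \tau$). What you actually need is: for \emph{every} earlier $\tau$, the intersection $\Delta^\sigma \cap \Delta^\tau$ (which may have small dimension) is contained in some facet of $\Delta^\sigma$ that is shared with an earlier simplex. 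The proof of this is the general-poset analogue of Lemma~\ref{lem:swapexists}: if $\operatorname{inv}(\tau) \le \operatorname{inv}(\sigma)$ and $\tau \ne \sigma$, then some $v_i^\sigma$ with $i \in \operatorname{Des}(\sigma)$ is absent from $\operatorname{vert}(\tau)$, so $\Delta^\sigma \cap \Delta^\tau$ lies in the facet of $\Delta^\sigma$ opposite $v_i^\sigma$, which is shared with the earlier simplex produced by swapping at position $i$. With that correction your outline goes through.
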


\begin{thm}\cite[Chapter 3]{beck2007computing}
    \label{hshelling}
    Let $\mathfrak{p}$ be a polytope with integer vertices. Let $\{ \Delta^1, \Delta^2, \ldots, \Delta^s \}$ be a unimodular triangulation of $\mathfrak{p}$ using no new vertices. If $\{ \Delta^1, \Delta^2, \ldots, \Delta^s \}$ is a shelling order, then $h^*_j$ is the number of $\Delta^i$ that are added along $j$ of their facets in this shelling. Equivalently,
    $$
    h^*(\mathfrak{p},t) = \sum_{i=1}^s t^{a_i},
    $$
    where $a_i = \# \{ k < i : \Delta^k \cap \Delta^i \text{ is a facet of } \Delta^i \}$.
\end{thm}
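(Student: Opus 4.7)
The plan is to use the shelling to produce a half-open decomposition of $\mathfrak{p}$ into unimodular half-open simplices whose Ehrhart generating functions combine cleanly into the claimed $h^*$-polynomial identity. For each $i$, let $F^i_1, \dots, F^i_{a_i}$ denote the facets of $\Delta^i$ that are contained in $\bigcup_{k<i} \Delta^k$; by the shelling hypothesis, $\Delta^i \cap \bigcup_{k<i} \Delta^k$ is exactly the union of these facets. Set $\widetilde{\Delta}^i := \Delta^i \setminus (F^i_1 \cup \dots \cup F^i_{a_i})$. The first step is to verify that $\mathfrak{p} = \bigsqcup_{i=1}^{s} \widetilde{\Delta}^i$ as a disjoint union: given $x \in \mathfrak{p}$, let $i$ be the minimum index with $x \in \Delta^i$; then $x$ cannot lie in any removed facet of $\Delta^i$, since each such facet is contained in some $\Delta^k$ with $k<i$, contradicting the minimality of $i$. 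Hence $x \in \widetilde{\Delta}^i$, and $i$ is uniquely determined by $x$.

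The second step is to compute the lattice-point generating function of each $\widetilde{\Delta}^i$. Since $\Delta^i$ is a unimodular $d$-simplex with integer vertices $v_0^i, \dots, v_d^i$, every lattice point in $k\Delta^i$ admits a unique expansion $\sum_{j=0}^d c_j v_j^i$ with $c_j \in \mathbb{Z}_{\ge 0}$ and $\sum_j c_j = k$; the facet of $\Delta^i$ opposite $v_j^i$ is characterized by $c_j = 0$. Consequently, a lattice point lies in $\widetilde{\Delta}^i$ precisely when $c_j \ge 1$ for each of the $a_i$ indices corresponding to a removed facet. Substituting $c_j \mapsto c_j - 1$ on those coordinates and applying stars-and-bars gives $|k \widetilde{\Delta}^i \cap \mathbb{Z}^d| = \binom{k - a_i + d}{d}$, whose generating function is $t^{a_i}/(1-t)^{d+1}$.

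Combining the two steps via additivity of the lattice-point enumerator across the disjoint union yields
\[
1 + \sum_{k \ge 1} L(\mathfrak{p},k)\, t^k \;=\; \sum_{i=1}^{s} \frac{t^{a_i}}{(1-t)^{d+1}} \;=\; \frac{\sum_{i=1}^{s} t^{a_i}}{(1-t)^{d+1}},
\]
and comparing with the defining identity of the $h^*$-polynomial identifies $h^*(\mathfrak{p},t) = \sum_{i=1}^s t^{a_i}$, and in particular $h^*_j = \#\{i : a_i = j\}$.

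The main obstacle is the verification that removing entire facets produces a legitimate partition of $\mathfrak{p}$: this is precisely where the shelling hypothesis, as opposed to a mere triangulation hypothesis, is essential, since shellability guarantees that $\Delta^i \cap \bigcup_{k<i}\Delta^k$ is a pure union of facets of $\Delta^i$ rather than an arbitrary subcomplex. A minor bookkeeping point is the case $a_1 = 0$ for the initial simplex, which contributes the constant term $t^0 = 1$ matching $h_0^*(\mathfrak{p}) = 1$; for $i > 1$, the connectedness of the simplicial complex already forces $a_i \ge 1$, so no further care is needed.
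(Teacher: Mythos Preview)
The paper does not prove this theorem; it is quoted from \cite[Chapter~3]{beck2007computing} and used as a black box. Your argument is the standard half-open decomposition proof (essentially the one in Beck--Robins), and it is correct: the shelling hypothesis guarantees that the sets $\widetilde{\Delta}^i$ partition $\mathfrak{p}$, unimodularity makes the lattice-point count in $k\widetilde{\Delta}^i$ exactly $\binom{k-a_i+d}{d}$, and summing the resulting generating functions yields $h^*(\mathfrak{p},t)=\sum_i t^{a_i}$.

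One small point worth stating explicitly: your disjointness argument uses not just minimality of $i$ but the fact that for any $j>i$ with $x\in\Delta^j$, the face $\Delta^i\cap\Delta^j$ is contained in one of the removed facets of $\Delta^j$; this follows because in a triangulation $\Delta^i\cap\Delta^j$ is a face of $\Delta^j$, and the shelling condition forces every such face lying in $\bigcup_{k<j}\Delta^k$ to sit inside some facet of $\Delta^j$. You implicitly rely on this when asserting ``$i$ is uniquely determined by $x$,'' so it would not hurt to spell it out.
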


Now we are ready to prove the main theorem of our section.

\begin{thm}
\label{h-characterization}
    $h^*(\mathcal{O}({\mathcal{C}_{2n}}),t) = \sum_{\sigma \in \operatorname{CA_{2n}}} t^{\operatorname{cswap}(\sigma)}$.
\end{thm}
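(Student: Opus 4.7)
The plan is to combine the shelling provided by Theorem~\ref{shelling} with the standard formula relating the $h^*$-polynomial of a lattice polytope to a shelling of a unimodular triangulation, recorded in Theorem~\ref{hshelling}. The canonical triangulation of $\mathcal{O}(\mathcal{C}_{2n})$ is unimodular and uses no new vertices, and by Theorem~\ref{shelling}, ordering its top-dimensional simplices $\Delta^{\sigma}$ (indexed by CAPs $\sigma$) by non-decreasing inversion number is a shelling. Applying Theorem~\ref{hshelling} yields
\begin{equation*}
    h^*(\mathcal{O}(\mathcal{C}_{2n}),t) = \sum_{\sigma \in \operatorname{CA_{2n}}} t^{a_\sigma},
\end{equation*}
where $a_\sigma$ is the number of CAPs $\tau$ preceding $\sigma$ in the shelling order such that $\Delta^\sigma \cap \Delta^\tau$ is a facet of $\Delta^\sigma$. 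The proof then reduces to identifying $a_\sigma$ with $\operatorname{cswap}(\sigma)$.

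For this identification I will use Proposition~\ref{prop:intinfacet} together with Remark~\ref{rmk:cyclicswap}: for two CAPs $\sigma,\tau$, the intersection $\Delta^\sigma \cap \Delta^\tau$ is a facet of $\Delta^\sigma$ if and only if $\sigma$ cyclically swaps to $\tau$ or $\tau$ cyclically swaps to $\sigma$. Since a cyclic swap strictly decreases the inversion number by one (whereas an ordinary swap strictly increases it by one), the CAPs $\tau$ that precede $\sigma$ in the shelling order and share a facet with $\Delta^\sigma$ are precisely those obtained from $\sigma$ by a single cyclic swap. These are parametrized by $\operatorname{CSwap}(\sigma)$, which has cardinality $\operatorname{cswap}(\sigma)$.

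The only point I anticipate needing to justify carefully, and thus the main (mild) obstacle, is that distinct elements $i \in \operatorname{CSwap}(\sigma)$ produce distinct CAPs $\tau$, so that $a_\sigma$ really equals $\operatorname{cswap}(\sigma)$ and not a smaller count. This follows from the construction of the simplices $\Delta^\sigma$: a cyclic swap at position $i$ changes only the vertex $v_i^\sigma$, leaving every other $v_k^\sigma$ fixed. Different choices of $i$ therefore yield simplices that differ from $\Delta^\sigma$ in different vertices, forcing the corresponding CAPs to be distinct. Combining these observations gives $a_\sigma = \operatorname{cswap}(\sigma)$ and hence the theorem.
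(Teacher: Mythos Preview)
Your proof is correct and follows essentially the same approach as the paper: combine the shelling of Theorem~\ref{shelling} with Theorem~\ref{hshelling}, and use Proposition~\ref{prop:intinfacet} and Remark~\ref{rmk:cyclicswap} to identify the facets of $\Delta^\sigma$ shared with earlier simplices as precisely those coming from cyclic swaps of $\sigma$. Your added justification that distinct $i\in\operatorname{CSwap}(\sigma)$ yield distinct $\tau$ (because only the vertex $v_i^\sigma$ is altered) is a nice bit of extra care that the paper leaves implicit.
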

\begin{proof}
    Let $\sigma_1, \sigma_2, \ldots \sigma_{A_\circ(2n)}$ be a shelling order as described in Theorem~\ref{shelling}. Hence, by Proposition~\ref{prop:intinfacet} and Remark~\ref{rmk:cyclicswap}, each $\Delta^{\sigma_i}$ is added in the shelling along $\operatorname{cswap}(\sigma_i)$ facets. By Theorem~\ref{hshelling},
    $$
        h^*(\mathcal{O}({\mathcal{C}_{2n}}),t) = \sum_{\sigma \in \operatorname{CA_{2n}}} t^{\operatorname{cswap}(\sigma)}.
    $$
\end{proof}

Theorems~\ref{thm:JHstatistic} and~\ref{h-characterization} imply the following equidistribution:

\begin{cor}
    Let $\mathcal{C}_{2n}$ on $[2n]$ be naturally labeled. Then
    \[
        \sum_{\sigma \in \operatorname{CA_{2n}}} t^{\operatorname{cswap}(\sigma)} = \sum_{\sigma \in \operatorname{JH}(\mathcal{C}_{2n})} t^{\operatorname{des}(\sigma)}.
    \]
\end{cor}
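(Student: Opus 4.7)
The plan is to obtain the identity by equating two independent expressions for the same $h^*$-polynomial. On one side, Theorem~\ref{h-characterization} gives
\[
h^*(\mathcal{O}(\mathcal{C}_{2n}),t) = \sum_{\sigma \in \operatorname{CA_{2n}}} t^{\operatorname{cswap}(\sigma)}.
\]
On the other side, since the hypothesis asserts that $\mathcal{C}_{2n}$ is naturally labeled on $[2n]$, Theorem~\ref{thm:JHstatistic} applies directly and yields
\[
h^*(\mathcal{O}(\mathcal{C}_{2n}),t) = \sum_{\sigma \in \operatorname{JH}(\mathcal{C}_{2n})} t^{\operatorname{des}(\sigma)}.
\]
Equating the two right-hand sides gives the claimed equidistribution.

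The only thing worth noting is that the natural labeling hypothesis is genuinely needed: Theorem~\ref{thm:JHstatistic} is stated for naturally labeled posets. For the labeling given in the definition of $\mathcal{C}_{2n}$ (with cover relations $1 \prec 2 \succ 3 \prec \cdots \succ 2n-1 \prec 2n \succ 1$), every cover relation goes from a smaller to a larger integer, so this labeling is indeed natural, and the hypothesis is consistent with the running convention of the paper.

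I do not expect any real obstacle here, since both ingredients have already been established earlier in the section. The entire content of the corollary is the observation that two different combinatorial interpretations of the same polynomial must agree term by term as polynomials in $t$.
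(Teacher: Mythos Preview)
Your proof is correct and is exactly the argument the paper intends: the corollary is stated immediately after the sentence ``Theorems~\ref{thm:JHstatistic} and~\ref{h-characterization} imply the following equidistribution,'' and your write-up simply spells this out.

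One small correction to your closing remark: the default labeling of $\mathcal{C}_{2n}$ given in the introduction, with cover relations $1 \prec 2 \succ 3 \prec \cdots \succ 2n-1 \prec 2n \succ 1$, is \emph{not} natural. For instance $2 \succ 3$ means $3 \prec 2$ in the poset while $3 > 2$ as integers. This is why the corollary explicitly hypothesizes a natural labeling, and why the example following it in the paper relabels $\mathcal{C}_4$ as $1 \prec 3 \succ 2 \prec 4 \succ 1$. Your proof does not depend on the erroneous remark, but you should drop or fix that sentence.
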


\begin{cor}
\label{degree-h}
    $\deg (h^*(\mathcal{O}(\mathcal{C}_{2n}),t)) = 2n-2$.
\end{cor}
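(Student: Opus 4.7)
The plan is to invoke the classical Ehrhart-theoretic identity
\[
\deg h^*(\mathfrak{p},t) \;=\; \dim(\mathfrak{p}) + 1 - \operatorname{codeg}(\mathfrak{p}),
\]
where $\operatorname{codeg}(\mathfrak{p}) = \min\{k \ge 1 : \operatorname{int}(k\mathfrak{p}) \cap \mathbb{Z}^d \neq \emptyset\}$ is the codegree; this identity is an immediate consequence of Ehrhart--Macdonald reciprocity applied to the Ehrhart series (see e.g.\ \cite{beck2007computing}). Since $\dim \mathcal{O}(\mathcal{C}_{2n}) = |\mathcal{C}_{2n}| = 2n$, the claim reduces to establishing $\operatorname{codeg}(\mathcal{O}(\mathcal{C}_{2n})) = 3$.

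For any finite poset $P$, a straightforward computation shows that the interior of $k\mathcal{O}(P)$ consists of points $x$ with $0 < x_i < k$ and $x_i < x_j$ whenever $i \prec j$, so its lattice points correspond bijectively to \emph{strictly} order-preserving maps $P \to \{1,\ldots,k-1\}$. Specializing to $P = \mathcal{C}_{2n}$, the cover relation $1 \prec 2$ forces any strict map to take at least two distinct values, giving $k-1 \ge 2$ and hence $k \ge 3$. Conversely, sending every minimal element of $\mathcal{C}_{2n}$ to $1$ and every maximal element to $2$ yields a strictly order-preserving map into $\{1,2\}$, realizing $k = 3$. Therefore $\operatorname{codeg}(\mathcal{O}(\mathcal{C}_{2n})) = 3$ and $\deg h^* = 2n + 1 - 3 = 2n-2$.

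I do not foresee a serious obstacle along this route, since both ingredients (the codegree formula and the description of interior lattice points of dilates of an order polytope) are entirely standard. A purely combinatorial alternative would be to appeal instead to Theorem~\ref{h-characterization} and show directly that $\max_{\sigma \in \operatorname{CA}_{2n}} \operatorname{cswap}(\sigma) = 2n-2$: the lower bound would require exhibiting an explicit CAP attaining cswap equal to $2n-2$, while the upper bound would require showing that at least two indices $i \in \{1,\ldots,2n-1\}$ must always satisfy $\sigma^{-1}(i) \le \sigma^{-1}(i+1) + 1$ in every CAP. This combinatorial path seems feasible but noticeably more delicate than the Ehrhart-theoretic argument above, so I would prefer the latter.
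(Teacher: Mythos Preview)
Your argument is correct. The codegree identity $\deg h^* = d + 1 - \operatorname{codeg}$ is standard, and your description of interior lattice points of $k\mathcal{O}(P)$ as strictly order-preserving maps $P \to [k-1]$ is accurate; from there the computation for $\mathcal{C}_{2n}$ is immediate.

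The route, however, is genuinely different from the paper's. The paper argues purely combinatorially: for the upper bound it invokes Theorem~\ref{thm:JHstatistic} and observes that the unique permutation with $2n-1$ descents, namely $2n(2n-1)\cdots 1$, is not a linear extension of the naturally labeled $\mathcal{C}_{2n}$; for the lower bound it invokes Theorem~\ref{h-characterization} and exhibits the CAP $\sigma = (n,2n,n-1,2n-1,\ldots,1,n+1)$ with $\operatorname{cswap}(\sigma)=2n-2$. In other words, the paper takes exactly the ``combinatorial alternative'' you sketch and dismiss as more delicate. Your Ehrhart-theoretic argument is shorter and does not rely on the shelling or the $\operatorname{cswap}$ interpretation at all. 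It is worth noting that the paper subsequently uses Corollary~\ref{degree-h} together with the Gorenstein property to deduce that the index of $\mathcal{O}(\mathcal{C}_{2n})$ is $3$; your proof essentially runs this implication in reverse, computing the codegree directly and reading off the degree.
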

\begin{proof}
    Since the only permutation $\tau$ such that $\operatorname{des}(\tau)=2n-1$ is $\tau = (2n, 2n-1, \ldots, 1)$ and such permutation does not belong to the set $\operatorname{JH} (\crown_{2n})$, then $\deg (h^*(\mathcal{O}({\mathcal{C}_{2n}}),t)) \leq 2n-2$.

    Now, consider the CAP $\sigma = (n, 2n, n-1, 2n-1, \ldots, 1, n+1)$. Since $i$ is to the right of $i+1$ for every $i \in [2n] \setminus \{n, 2n\}$ and swapping $i$ and $i+1$ yields a CAP, then $\operatorname{cswap}(\sigma) = 2n-2$ and, hence, $\deg (h^*(\mathcal{O}({\mathcal{C}_{2n}}),t)) \geq 2n-2$.
\end{proof}

\begin{exam}
    For $n = 2$, we have $\operatorname{CA_{2n}} = \{ 1324, 1423, 2314, 2413 \}$. Hence
    
    \begin{table}[H]
    \centering
        \begin{tabular}{|| c || c | c | c | c ||} 
         \hline
         $\sigma$ & 1324 & 1423 & 2314 & 2413 \\
         \hline
         $\sigma^{-1}$ & 1324 & 1342 & 3124 & 3142 \\
         \hline
         $\operatorname{cswap}(\sigma)$ & 0 & 1 & 1 & 2 \\
         \hline
        \end{tabular}
    \end{table}
    and we have
    \[
        \sum_{\sigma \in \operatorname{CA_{4}}} t^{\operatorname{cswap}(\sigma)} = 1 + 2t + t^2.
    \]

    Now, consider the crown poset on $4$ elements, $\mathcal{C}_4$, with order relations defined by $1 \prec 3 \succ 2 \prec 4 \succ 1$.

    \begin{figure}[H]
        \centering
        \begin{tikzpicture}[line cap=round,line join=round,>=triangle 45,x=1cm,y=1cm]
            \clip(-9,6.2995524384280617) rectangle (-4,8.529946161913498);
            \draw [line width=1pt] (-8,7)-- (-7,8);
            \draw [line width=1pt] (-7,8)-- (-6,7);
            \draw [line width=1pt] (-6,7)-- (-5,8);
            \draw [line width=1pt] (-5,8)-- (-8,7);
            \draw (-8.020190989823679,7) node[anchor=north west] {1};
            \draw (-7.02493388454507,8.5) node[anchor=north west] {3};
            \draw (-6.016971369411841,7) node[anchor=north west] {2};
            \draw (-5.0,8.5) node[anchor=north west] {4};
            \begin{scriptsize}
                \draw [fill=ttqqqq] (-8,7) circle (1.5pt);
                \draw [fill=ttqqqq] (-7,8) circle (1.5pt);
                \draw [fill=ttqqqq] (-6,7) circle (1.5pt);
                \draw [fill=ttqqqq] (-5,8) circle (1.5pt);
            \end{scriptsize}
        \end{tikzpicture}
    \end{figure}

    Then $\operatorname{JH}(\mathcal{C}_{4}) = \{1234, 1243, 2134, 2143 \}$. Hence
    \begin{table}[H]
    \centering
        \begin{tabular}{|| c || c | c | c | c ||} 
         \hline
         $\sigma$ & 1234 & 1243 & 2134 & 2143 \\
         \hline
         $\operatorname{des}(\sigma)$ & 0 & 1 & 1 & 2 \\
         \hline
        \end{tabular}
    \end{table}
    and we have
    \[
        \sum_{\sigma \in \operatorname{JH}(\mathcal{C}_{2n})} t^{\operatorname{des}(\sigma)} = 1 + 2t + t^2.
    \]
\end{exam}

Recall that a lattice polytope $\mathfrak{p}$ is called \emph{reflexive} if there is an interior lattice point $x$ such that every facet has lattice distance one from $x$. In this case, $x$ is the unique interior lattice point of $\mathfrak{p}$. A lattice polytope is called \emph{Gorenstein} if there is some $r \in \mathbb{Z}_{\geq 1}$ such that $r\mathfrak{p}$ is reflexive. In this case, $r$ is called the \emph{index} of $\mathfrak{p}$. A poset is called \emph{pure} if all maximal chains have the same length.

In \cite{coons2019h}, it was shown that $\mathcal{O}(Z_n)$ is a Gorenstein polytope of index $3$ for every $n \geq 1$. We also show that $\mathcal{O}(\mathcal{C}_{2n})$ is a Gorenstein polytope of index $3$ for every $n \geq 1$ and present some consequences of this property.

\begin{prop} \label{prop:popgorenstein}
$\mathcal{O}({\mathcal{C}_{2n}})$ is Gorenstein for every $n \geq 1$.
\end{prop}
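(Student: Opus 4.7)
The plan is to invoke Hibi's characterization of Gorenstein order polytopes: for a finite poset $P$, the order polytope $\mathcal{O}(P)$ is Gorenstein if and only if the bounded extension $\hat P = P \cup \{\hat 0, \hat 1\}$ is pure (i.e., all maximal chains of $\hat P$ have the same length), and in that case the Gorenstein index equals the common length of a maximal chain in $\hat P$.

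First I would observe that by construction, the odd-indexed elements $1,3,\dots,2n-1$ are precisely the minimal elements of $\mathcal{C}_{2n}$ and the even-indexed elements $2,4,\dots,2n$ are precisely the maximal elements. Consequently every maximal chain of $\hat{\mathcal{C}}_{2n}$ is of the form $\hat 0 \prec (2i-1) \prec (2j) \prec \hat 1$ for some cover relation $2i-1 \prec 2j$ in $\mathcal{C}_{2n}$, and each such chain has length $3$. Hence $\hat{\mathcal{C}}_{2n}$ is pure, and Hibi's theorem yields that $\mathcal{O}(\mathcal{C}_{2n})$ is Gorenstein of index $3$. This matches the analogous statement recalled in the paper for the zigzag order polytope $\mathcal{O}(Z_n)$.

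For a self-contained alternative, I would exhibit an interior lattice point of $3\mathcal{O}(\mathcal{C}_{2n})$ at lattice distance $1$ from every facet. Using the explicit facet inequalities of $\mathcal{O}(\mathcal{C}_{2n})$ from Section \ref{section:preliminaries}, the point $\mathbf{p}\in\mathbb{R}^{2n}$ defined by $p_i = 1$ for odd $i$ and $p_j = 2$ for even $j$ satisfies $0 < p_i$, $p_j < 3$, and $p_i < p_j$ for every cover relation $i\prec j$, with each inequality gap equal to $1$. Scaling by $1/3$ shows that $\mathbf{p}$ is the unique interior lattice point of $3\mathcal{O}(\mathcal{C}_{2n})$ and certifies reflexivity, again giving Gorenstein index $3$.

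The main step is the purity observation, which is immediate from the definition of $\mathcal{C}_{2n}$ as a rank-one (``$1$-graded'') poset; consequently no serious obstacle is expected. The only care needed is in identifying the index value $3$, which follows from the fact that maximal chains of $\hat{\mathcal{C}}_{2n}$ have exactly four elements.
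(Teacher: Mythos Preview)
Your proposal is correct and follows essentially the same route as the paper: the paper's proof is a one-line citation of Hibi's result that order polytopes of pure posets are Gorenstein, and your argument simply spells out why $\mathcal{C}_{2n}$ (equivalently $\hat{\mathcal{C}}_{2n}$) is pure. The extra material you include---the index being $3$ and the explicit interior lattice point $(1,2,\dots,1,2)$ of $3\mathcal{O}(\mathcal{C}_{2n})$---is not needed for this proposition; the paper establishes those facts separately in a subsequent corollary.
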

\begin{proof}
    By \cite[\textsection 3 d]{MR951198}, order polytopes of pure posets are Gorenstein.
\end{proof}
By \cite{stanley1986two}, order polytopes admit a unimodular triangulation. This combined with Proposition~\ref{prop:popgorenstein} and  \cite[Theorem $1$]{gorenstein2007}, gives us
\begin{cor}
\label{thm:symmhpop} 
    The coefficients of the $h^{*}$-polynomials of $\mathcal{O}({\mathcal{C}_{2n}})$ are symmetric and unimodal.
\end{cor}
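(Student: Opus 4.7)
The plan is to assemble the three ingredients that the text just before the corollary has already gathered: the Gorenstein property of $\mathcal{O}(\crown_{2n})$, the existence of a unimodular triangulation of any order polytope, and the general structural theorem of \cite{gorenstein2007} relating these two properties to symmetry and unimodality of $h^*$. No new combinatorial or geometric work is needed beyond pointing to these inputs.

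First I would record that $\mathcal{O}(\crown_{2n})$ is a Gorenstein lattice polytope. This is exactly the content of Proposition~\ref{prop:popgorenstein}, since $\crown_{2n}$ is $1$-graded and in particular pure, so the criterion of \cite{MR951198} applies. The Gorenstein property alone already implies that $h^*(\mathcal{O}(\crown_{2n}),t)$ is a palindromic polynomial; equivalently, the sequence $(h^*_0,h^*_1,\dots,h^*_{2n-2})$ is symmetric. (By Corollary~\ref{degree-h}, the degree is $2n-2$.)

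Next I would invoke Stanley's construction from \cite{stanley1986two} of the canonical triangulation $\{\Delta^\sigma\}_{\sigma \in \operatorname{JH}(\crown_{2n})}$ of $\mathcal{O}(\crown_{2n})$; this triangulation is unimodular (and regular, being pulling from the all-ones vertex). Thus $\mathcal{O}(\crown_{2n})$ is a Gorenstein lattice polytope admitting a regular unimodular triangulation.

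Finally, I would apply \cite[Theorem~1]{gorenstein2007}, which states that for any Gorenstein lattice polytope with a regular unimodular triangulation, the $h^*$-vector is symmetric and unimodal. Instantiating this theorem with $\mathfrak{p} = \mathcal{O}(\crown_{2n})$ yields Corollary~\ref{thm:symmhpop}. There is essentially no obstacle: the work has already been done in the preceding results, and the only thing to check is that the hypotheses of \cite[Theorem~1]{gorenstein2007} are met, which is immediate from Proposition~\ref{prop:popgorenstein} together with Stanley's canonical triangulation.
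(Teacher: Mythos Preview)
Your proposal is correct and matches the paper's own proof essentially line for line: the paper simply combines the Gorenstein property from Proposition~\ref{prop:popgorenstein}, Stanley's unimodular triangulation of order polytopes from \cite{stanley1986two}, and \cite[Theorem~1]{gorenstein2007}. The extra remarks you make about regularity of the canonical triangulation and the degree from Corollary~\ref{degree-h} are accurate elaborations but not needed for the argument.
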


\begin{table}[H]
\centering
\begin{tabular}{|| c || c ||} 
 \hline
 $n$ & $h^*(\mathcal{O}({\mathcal{C}_{2n}}))$ \\
 \hline
 \hline
 2 & $(1,2,1)$ \\
 \hline
 3 & $(1,11,24,11,1)$ \\
 \hline
 4 & $(1,38,263,484,263,38,1)$ \\
 \hline
 5 & $(1,112,1983,9684,16120,9684,16120,9684,1983,112,1)$ \\
 \hline
\end{tabular}
\caption{Table of $h^*$-vectors of $\mathcal{O}(\mathcal{C}_{2n})$ for $2 \leq n \leq 5$.}
\label{table-h-vector}
\end{table}

\begin{cor}
    The index of $\mathcal{O}({\mathcal{C}_{2n}})$ is $3$ and $x = (1,2,1,2, \ldots, 1,2)$ is the unique interior lattice point of $3\mathcal{O}({\mathcal{C}_{2n}})$.
\end{cor}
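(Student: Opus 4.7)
The plan is to combine two ingredients: a standard identity relating the Gorenstein index to the degree of the $h^*$-polynomial, and a direct analysis of the interior of $3\mathcal{O}(\mathcal{C}_{2n})$ from its facet description.

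For the index, recall that for any Gorenstein lattice polytope $\mathfrak{p}$ of dimension $d$ and index $r$, one has $\deg h^*(\mathfrak{p},t) = d+1-r$ (this follows from the symmetric decomposition of the $h^*$-polynomial of a Gorenstein polytope). By Proposition~\ref{prop:popgorenstein}, $\mathcal{O}(\mathcal{C}_{2n})$ is Gorenstein; its dimension is $2n$ by the discussion in Section~\ref{section:preliminaries}; and $\deg h^*(\mathcal{O}(\mathcal{C}_{2n}),t) = 2n-2$ by Corollary~\ref{degree-h}. Solving gives $r = 2n + 1 - (2n-2) = 3$.

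For the unique interior lattice point, I would unpack the facet-defining inequalities of $3\mathcal{O}(\mathcal{C}_{2n})$ from the description in Section~\ref{section:preliminaries}. An interior lattice point $x \in \mathbb{Z}^{2n}$ must satisfy $0 < x_i$ for every minimal $i$, $x_j < 3$ for every maximal $j$, and $x_i < x_j$ for every cover $i \prec j$. The key observation is that every element of $\mathcal{C}_{2n}$ is either minimal or maximal: the minima are the odd-indexed elements and the maxima are the even-indexed ones, and every cover relation goes from an odd index to an even one. Integrality then forces $x_i \geq 1$ for odd $i$, $x_j \leq 2$ for even $j$, and $x_j \geq x_i + 1$ on each cover, so necessarily $x_i = 1$ and $x_j = 2$. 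Thus $(1,2,1,2,\ldots,1,2)$ is the only candidate, and a quick verification shows it does satisfy all the strict inequalities, completing the argument.

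No serious obstacle is expected. The only subtlety is citing the formula $\deg h^* = d+1-r$ for Gorenstein polytopes correctly; this is standard in Ehrhart theory and can be invoked by reference to, e.g., the reflexivity characterization used in Corollary~\ref{thm:symmhpop}.
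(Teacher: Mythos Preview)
Your proposal is correct and follows essentially the same argument as the paper: both use the Gorenstein identity $\deg h^* = d+1-r$ together with Corollary~\ref{degree-h} to obtain $r=3$, and both read off the unique interior lattice point of $3\mathcal{O}(\mathcal{C}_{2n})$ directly from the strict facet inequalities. Your write-up is in fact slightly more explicit than the paper's in the second part, but the route is identical.
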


\begin{proof}
    Let $r$ be the index of $\mathcal{O}({\mathcal{C}_{2n}})$. By \cite[Theorem 1]{gorenstein2007},
    \begin{equation}
    \label{degree-h2}
        \deg(h^*(\mathcal{O}({\mathcal{C}_{2n}}),t)) = 2n-r+1.
    \end{equation}
    Equation \eqref{degree-h2} combined with Corollary~\ref{degree-h} imply that $r = 3$.

    For the second part, observe that the facet-defining inequalities of $3 \mathcal{O}({\mathcal{C}_{2n}})$ are
    \begin{gather*}
        0 \leq x_i \mbox{ if } i \mbox{ is odd}\\
        x_i \leq 3 \mbox{ if } i \mbox{ is even}\\
        x_{2i-1} \leq x_{2i} \mbox{ for every } i=1,2,\ldots,n \\
        x_{2i+1} \leq x_{2i} \mbox{ for every } i=0,1,\ldots,n-1 \\
        x_1 \leq x_{2n},
    \end{gather*}
    from which it is clear that $x = (1,2,1,2, \ldots, 1,2)$ is the unique interior lattice point of $3\mathcal{O}({\mathcal{C}_{2n}})$. Moreover, $x$ has lattice distance $1$ from each facet of $3\mathcal{O}({\mathcal{C}_{2n}})$.
\end{proof}

An interesting question about $h^*$-polynomials to be investigated is whether or not they have only real roots. The following conjecture was verified for $2 \leq n \leq 6$:

\begin{conj}
    $h^*(\mathcal{O}(\mathcal{C}_{2n}),t)$ is real-rooted for every $n \geq 2$.
\end{conj}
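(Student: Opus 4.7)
The plan is to reduce, via the palindromic symmetry from Corollary~\ref{thm:symmhpop}, the problem to real-rootedness of the $\gamma$-polynomial of $h^*(\mathcal{O}(\mathcal{C}_{2n}),t)$, and then attack the reduced problem by an interlacing argument. Writing $d = 2n-2 = 2m$, the palindromic polynomial $h^*(\mathcal{O}(\mathcal{C}_{2n}),t)$ factors as $t^m\, \widetilde h(t + 1/t)$ for a unique polynomial $\widetilde h$ of degree $m$; since all coefficients of $h^*$ are positive, $h^*$ is real-rooted if and only if $\widetilde h$ has all its roots in $(-\infty, -2]$. Using the identity $(1+t)^2/t = (t + 1/t) + 2$ together with the $\gamma$-expansion from Theorem~\ref{thm:gamma_nonneg}, this gives
\[
    \widetilde h(s) = \sum_{j=0}^{n-1} \gamma_{2n,j}\, (s+2)^{n-1-j},
\]
and a short computation (substituting $w = 1/(s+2)$) shows that the conjecture is equivalent to the $\gamma$-polynomial $\gamma_{\mathcal{C}_{2n}}(u) = \sum_{j=0}^{n-1} \gamma_{2n,j}\, u^j$ being real-rooted with all roots strictly negative.

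Next, Theorem~\ref{thm:gamma_nonneg} identifies $\gamma_{2n,j}$ (up to the factor $2^{-2n+2+2j}$) with the number of linear extensions $\pi \in \operatorname{JH}(\widetilde{\mathcal{C}}_{2n}, \widetilde \omega)$ satisfying $\peak(\pi) = j+1$, so $\gamma_{\mathcal{C}_{2n}}(u)$ is, up to rescaling $u$, a concrete peak-generating polynomial on these linear extensions. The natural approach is to decompose these extensions by some combinatorial parameter $\alpha$ — for instance, the position of the label $2n$, or the image of one of the blocks forced by a fixed cover relation of $\widetilde{\mathcal{C}}_{2n}$ — thereby writing $\gamma_{\mathcal{C}_{2n}}(u) = \sum_\alpha p_\alpha(u)$ with each $p_\alpha$ of smaller degree and with nonnegative coefficients. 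One then aims to invoke the Chudnovsky--Seymour / Brändén framework of \emph{compatible polynomials}: if the summands $\{p_\alpha\}$ are pairwise interlacing, their sum is real-rooted. This would proceed by a simultaneous induction on $n$, carrying as an invariant the entire interlacing pattern among the $p_\alpha$ at level $n$ and verifying that the decomposition at level $n+1$ refines it.

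The hard part, and the reason this remains a conjecture, is establishing the pairwise interlacing of the combinatorially defined summands. This is precisely the point at which Neggers--Stanley-type arguments typically fail, and the cyclic (rather than linear) structure of $\mathcal{C}_{2n}$ likely introduces boundary contributions absent from the zigzag case that obstruct a clean inductive argument. A plausible alternative is to sidestep the gamma reduction and work directly with the descent polynomial $\sum_{\pi \in \operatorname{JH}(\mathcal{C}_{2n})} t^{\operatorname{des}(\pi)}$, decomposing $\operatorname{JH}(\mathcal{C}_{2n})$ by the position of $2n$ in $\pi$ so as to reduce to families of linear extensions of the zigzag poset $Z_{2n-1}$; any proof of real-rootedness for the zigzag $h^*$-polynomials, combined with a careful analysis of how the extra cover relation $2n \succ 1$ perturbs the resulting interlacing families, would then plausibly yield the crown case as well.
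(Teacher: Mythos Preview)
The statement is stated in the paper as a \emph{conjecture}, not a theorem; the paper offers no proof at all, only a remark that it has been verified computationally for $2 \le n \le 6$. So there is no ``paper's own proof'' to compare against.

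Your proposal is likewise not a proof, and you say so yourself: the reduction to real-rootedness of the $\gamma$-polynomial via the palindromic factorization is correct and standard, and the interlacing strategy you sketch is a reasonable line of attack, but you explicitly identify the gap (``the hard part, and the reason this remains a conjecture'') at the step where one would need to establish pairwise interlacing of the combinatorial summands. That is an honest assessment and matches the status of the problem in the paper. What you have written is a discussion of possible approaches, not a proof; since the paper also has no proof, there is nothing further to adjudicate.
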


\subsection{The \texorpdfstring{$\gamma$}--nonnegativity of the \texorpdfstring{$h^*$}--vector}
\label{sec-gamma}
Let $g(t) = \sum_{i \geq 0} g_i t^i$ be a symmetric polynomial. Then, $g_i = g_{d-i}$ for some positive integer $d$, which we define as the \emph{symmetric degree} of $g$.

Symmetric polynomials with symmetric degree $d$ span a vector space of dimension $\lfloor d / 2 \rfloor + 1$. Hence, they can be written in the basis
\[
    \Gamma_d = \{ t^i (1+t)^{d-2i} \}_{i=0}^{\lfloor d / 2 \rfloor},
\]
that is,
\[
    g(t) = \sum_{0 \leq 2i \leq d} \gamma_i t^i(1+t)^{d-2i}.
\]
The vector $\gamma = (\gamma_0,\gamma_1, \ldots, \gamma_{\lfloor d / 2 \rfloor})$ is called the \emph{$\gamma$-vector} of $g$ and a topic of interest is the nonnegativity of the coefficients of the $\gamma$-vector of a polynomial.

The study of the nonnegativity of the $\gamma$-vector has gained significant attention due to its deep connections to combinatorics, and algebraic geometry. Initially introduced by Foata and Schützenberger \cite{MR272642} in the context of Eulerian polynomials and later developed further by various researchers such as Foata and Strehl \cite{MR347951,MR434831}, as well as Brändén \cite{MR2120105} and Gal \cite{MR2155722}, $\gamma$-nonnegativity emerged as a powerful concept that provides a novel approach to several important problems, particularly in combinatorics. One key area of application is in unimodality questions for symmetric polynomials. The nonnegativity of the $\gamma$-vector offers insights into the symmetric structure of a polynomial, revealing its unimodal nature and providing a potential approach for proving unimodality even for polynomials that are not real-rooted. For example, symmetric real-rooted polynomials in $\mathbb{R}_{\geq 0}$ have nonnegative $\gamma$-vectors \cite[Lemma 4.1]{MR2120105}. On the other hand, every $\gamma$-nonnegative polynomial (real-rooted or not) is symmetric and unimodal. This is particularly important in combinatorics, where polynomials such as Eulerian polynomials or $P$-Eulerian polynomials play a significant role in enumerating combinatorial objects (e.g., flag triangulations of spheres) and understanding combinatorial symmetries. Also, it is an interesting question finding a combinatorial interpretation of the coefficients of the $\gamma$-vector.

Let $P$ be a finite poset. A \emph{labeling} of $P$ is an injection $\omega \colon P \to \mathbb{Z}$. The pair $(P, \omega)$ is called \emph{naturally labeled} if $x \leq_P y$ implies $\omega(x) \leq \omega(y)$. In this case, $\omega$ is called a \emph{natural labeling}. Let $(\Tilde{P}, \Tilde{\omega})$ be any \emph{canonically labeled poset} \cite[Page 9]{MR2388387} obtained from $(P, \omega)$ by adjoining a greatest element.

By \cite[Theorem 4.2]{MR2120105}, $h^*(\mathcal{O}(P),t)$ has nonnegative expansion in the basis $\Gamma_{|P|-r-1}$ for any sign-graded poset of rank $r$, and, by \cite[Theorem 6.4]{MR2388387}, the coefficients of their $\gamma$-vectors have a combinatorial interpretation. Since naturally graded posets are also sign-graded, we deduce the following:

\begin{thm}\label{thm:gamma_nonneg}
    For each $n \geq 1$, the polynomial $h^*(\mathcal{O}(\mathcal{C}_{2n}),t)$ has a nonnegative expansion in the basis $\Gamma_{2n-2}$, that is, there are nonnegative integers $\gamma_{2n,j}$ such that
    \[
        h^*(\mathcal{O}(\mathcal{C}_{2n}),t) = \sum_{0 \leq j \leq n-1} \gamma_{2n,j} t^j (1+t)^{2n-2-2j}
    \]
    and
    \[
        \gamma_{2n,j} = 2^{-2n+2+2j} |\{ \pi \in \operatorname{JH}(\Tilde{\mathcal{C}}_{2n}, \Tilde{\omega}) \colon \peak(\pi) = j+1 \}|,
    \]
    where $\omega$ is any natural labeling of $\mathcal{C}_{2n}$ and $\peak(\pi) = |\{ i \colon a_{i-1} < a_i > a_{i+1} \}|$.
\end{thm}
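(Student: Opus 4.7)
The plan is to assemble this result from two ingredients already available in the literature, since the statement itself announces that it ``collects together known results.'' The key observation is that $\crown_{2n}$ has rank function $\rho$ with $\rho(a_i) = 0$ for every lower element $a_i$ and $\rho(b_j) = 1$ for every upper element $b_j$, so $\crown_{2n}$ is a $1$-graded (hence naturally graded) poset, as was already remarked before Lemma~\ref{reciprocity-graded-posets}. Every naturally graded poset is in particular a sign-graded poset in the sense of Br\"and\'en, with rank $r = 1$.

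First I would invoke \cite[Theorem 4.2]{MR2120105}, which asserts that for any sign-graded poset $P$ of rank $r$, the polynomial $h^*(\mathcal{O}(P), t)$ has nonnegative expansion in the basis $\Gamma_{|P| - r - 1}$. Applying this to $P = \crown_{2n}$ with $|P| = 2n$ and $r = 1$ yields nonnegative integers $\gamma_{2n,0}, \gamma_{2n,1}, \dots, \gamma_{2n, n-1}$ with
\[
    h^*(\mathcal{O}(\crown_{2n}),t) = \sum_{0 \le j \le n-1} \gamma_{2n,j}\, t^j (1+t)^{2n-2-2j},
\]
since $\lfloor (2n-2)/2 \rfloor = n-1$. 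This immediately produces the expansion claimed in the theorem.

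Next I would extract the combinatorial interpretation of the coefficients $\gamma_{2n,j}$ by appealing to \cite[Theorem 6.4]{MR2388387}, which gives such a formula for the $\gamma$-vector of the $(P,\omega)$-Eulerian polynomial of any sign-graded labeled poset in terms of the peak statistic on the Jordan--H\"older set of the canonically labeled poset $(\Tilde P, \Tilde \omega)$ obtained by adjoining a maximum. Specializing this theorem to $(P,\omega) = (\crown_{2n}, \omega)$ for any natural labeling $\omega$ (which exists because $\crown_{2n}$ is finite), and using the standard identification $h^*(\mathcal{O}(P), t) = W(P, \omega; t)$ for naturally labeled posets (Theorem~\ref{thm:JHstatistic}), produces exactly
\[
    \gamma_{2n,j} = 2^{-2n + 2 + 2j}\, |\{ \pi \in \operatorname{JH}(\Tilde{\crown}_{2n}, \Tilde\omega) \colon \peak(\pi) = j + 1 \}|.
\]

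The main point to be careful about is that the combinatorial formula of \cite{MR2388387} is stated for sign-graded labeled posets and involves a choice of labeling, and one must check that (i) any natural labeling of $\crown_{2n}$ gives a sign-graded labeled poset in their sense, and (ii) the resulting count is independent of the specific natural labeling chosen, so the formula is well-posed. Both of these follow from the fact that $\crown_{2n}$ is $1$-graded, so every natural labeling satisfies the required compatibility with the rank function, and peaks of linear extensions of $\Tilde{\crown}_{2n}$ depend only on the comparison data, not on the specific natural labeling. Once these compatibility checks are in place, the theorem is a direct consequence of the two cited results; no new combinatorics is needed beyond identifying $\crown_{2n}$ as a naturally (and hence sign-) graded poset of rank $1$.
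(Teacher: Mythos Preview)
Your proposal is correct and follows exactly the same route as the paper: the paper's entire argument (given in the sentence immediately preceding the theorem) is to observe that $\crown_{2n}$ is naturally graded and hence sign-graded of rank $1$, then invoke \cite[Theorem~4.2]{MR2120105} for the nonnegative $\Gamma_{2n-2}$-expansion and \cite[Theorem~6.4]{MR2388387} for the peak interpretation of the coefficients. Your additional remarks about well-posedness of the labeling are a reasonable elaboration, but no further content is needed.
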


\begin{exam}
    Recall that $h^*(\mathcal{O}({\mathcal{C}_{4}}),t) = 1 + 2t + t^2$. Then,
    \[
        h^*(\mathcal{O}(\mathcal{C}_{2n}),t) = 1 \cdot t^0 (1+t)^2 + 0 \cdot t^1 (1+t)^0,
    \]
    from which follows that $\gamma_{4,0} = 1$ and $\gamma_{4,1} = 0$. By Theorem \ref{thm:gamma_nonneg}, we now verify that
    \[
        |\{ \pi \in \operatorname{JH}(\Tilde{\mathcal{C}}_{4}, \Tilde{\omega}) \colon \peak(\pi) = 1 \}| = 4
    \]
    and
    \[
        |\{ \pi \in \operatorname{JH}(\Tilde{\mathcal{C}}_{4}, \Tilde{\omega}) \colon \peak(\pi) = 2 \}| = 0.
    \]
    
    In fact, a canonical labeling $\Tilde{\omega}$ of $\Tilde{\mathcal{C}}_{4}$ is given in Figure \ref{fig:new-example}. Hence,
    \[
        \operatorname{JH}(\Tilde{\mathcal{C}}_{4}, \Tilde{\omega}) = \{ 23451, 23541, 32451, 32541 \}.
    \]
    Since $\peak(23451) = \peak(23541) = \peak(32451) = \peak(32541) = 1$, the claim follows.
    
    \begin{figure}[H]
        \centering
        \begin{tikzpicture}[line cap=round,line join=round,>=triangle 45,x=1cm,y=1cm]
            \clip(-9,6.2995524384280617) rectangle (-4,10);
            \draw [line width=1pt] (-7,7)-- (-7,8);
            \draw [line width=1pt] (-7,8)-- (-5,7);
            \draw [line width=1pt] (-5,7)-- (-5,8);
            \draw [line width=1pt] (-5,8)-- (-7,7);
            \draw [line width=1pt] (-7,8)-- (-6,9);
            \draw [line width=1pt] (-5,8)-- (-6,9);
            \draw (-7.520190989823679,7) node[anchor=north west] {2};
            \draw (-7.52493388454507,8.5) node[anchor=north west] {4};
            \draw (-5.016971369411841,7) node[anchor=north west] {3};
            \draw (-5.0,8.5) node[anchor=north west] {5};
            \draw (-6.2,9.8) node[anchor=north west] {1};
            \begin{scriptsize}
                \draw [fill=ttqqqq] (-7,7) circle (1.5pt);
                \draw [fill=ttqqqq] (-7,8) circle (1.5pt);
                \draw [fill=ttqqqq] (-5,7) circle (1.5pt);
                \draw [fill=ttqqqq] (-5,8) circle (1.5pt);
                \draw [fill=ttqqqq] (-6,9) circle (1.5pt);
            \end{scriptsize}
        \end{tikzpicture}
        \caption{A canonical labeling of $\Tilde{\mathcal{C}}_{4}$.}
        \label{fig:new-example}
    \end{figure}
\end{exam}

\section{Open problems and further directions}
\label{sec-open-further}

We conclude this work with some open questions and further directions. One of the reasons why we studied the order polytope of the crown poset is given in this section. Recall that the \emph{ordinal sum} of two posets $P$ and $Q$, denoted by $P \oplus Q$, is the poset on $P \cup Q$ such that $x \leq y$ in $P \oplus Q$ if $x \leq y$ in $P$, or if $x \leq y$ in $Q$, or if $x \in P$ and $y \in Q$. This implies

\begin{prop}\cite[Lemma 7.3]{MR4108204}
    Let $P$ and $Q$ be finite posets. Then
    \[
        \mathcal{O}(P \oplus Q) = \conv(\{1\}^Q \times \mathcal{O}(P) \cup \mathcal{O}(Q) \times \{0\}^P) \subseteq \mathbb{R}^{P \sqcup Q}.
    \]
\end{prop}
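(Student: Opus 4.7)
The plan is to prove the two inclusions separately, exploiting the characterization of vertices of the order polytope as indicator vectors of filters of the underlying poset.

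For the inclusion $\supseteq$, I would observe that $\mathcal{O}(P \oplus Q)$ is convex, so it suffices to show that each of the two factors $\{1\}^Q \times \mathcal{O}(P)$ and $\mathcal{O}(Q) \times \{0\}^P$ is contained in $\mathcal{O}(P \oplus Q)$; then the convex hull of their union is automatically contained as well. This is a direct verification against the defining inequalities $x_i \le x_j$ for $i \preceq j$ in $P \oplus Q$: on $\{1\}^Q \times \mathcal{O}(P)$ all $Q$-coordinates equal $1$ and all $P$-coordinates lie in $[0,1]$, so any relation $i \preceq j$ with $i \in P, j \in Q$ is satisfied, relations internal to $P$ hold by virtue of $\mathcal{O}(P)$, and relations internal to $Q$ become $1 \le 1$; the other factor is symmetric.

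For the inclusion $\subseteq$, I would use the fact that the vertices of $\mathcal{O}(P \oplus Q)$ are precisely the indicator vectors of filters of $P \oplus Q$. The key combinatorial observation is that filters of $P \oplus Q$ split into two families: either (a) the filter is entirely contained in $Q$ and is itself a filter $F$ of $Q$, or (b) the filter contains at least one element of $P$, in which case it must contain every element above it in $P \oplus Q$ and hence all of $Q$, so it has the form $Q \cup F$ for some filter $F$ of $P$. Indicator vectors of type (a) lie in $\mathcal{O}(Q) \times \{0\}^P$ (in fact they are exactly its vertices after identifying $\mathcal{O}(Q)$ with $\mathcal{O}(Q) \times \{0\}^P$), while indicator vectors of type (b) lie in $\{1\}^Q \times \mathcal{O}(P)$. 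Therefore every vertex of $\mathcal{O}(P \oplus Q)$ lies in the set $\{1\}^Q \times \mathcal{O}(P) \cup \mathcal{O}(Q) \times \{0\}^P$, and since $\mathcal{O}(P \oplus Q)$ is the convex hull of its vertices, it is contained in the convex hull of this union.

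I do not expect a real obstacle here beyond bookkeeping: the only point requiring care is the claim that filters of $P \oplus Q$ are of the two types above, which follows immediately from the definition of the ordinal sum (every element of $P$ lies below every element of $Q$). Once both inclusions are in place, the equality of the two polytopes follows.
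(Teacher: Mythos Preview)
Your argument is correct. Note, however, that the paper does not give its own proof of this proposition: it is quoted verbatim as \cite[Lemma~7.3]{MR4108204} and used as a black box, so there is no in-paper proof to compare against. Your approach---checking the defining inequalities for the inclusion $\supseteq$, and classifying filters of $P\oplus Q$ as either filters of $Q$ or sets of the form $Q\cup F$ with $F$ a filter of $P$ for the inclusion $\subseteq$---is the standard one and goes through without difficulty.
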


In particular, the above proposition shows that adding a minimum or a maximum element to $P$ corresponds to taking a pyramid over the original order polytope. Hence, if $z$ is not an element of $P$, then
\begin{equation}
\label{eq-face-lattice-POP}
    f(\mathcal{O}(z \oplus P),t) = f(\mathcal{O}(P \oplus z),t) = (1+t) f(\mathcal{O}(P),t).
\end{equation}

Let $P_n$ be a polygon with $n$ vertices. Then its face lattice $\mathcal{L}(P_n)$ is isomorphic to $\hat{0} \oplus \mathcal{C}_{2n} \oplus \hat{1}$. Then, by Equation \eqref{eq-face-lattice-POP}, the $f$-polynomial of $\mathcal{O}(\mathcal{L}(P_n))$ is given by
\begin{equation}
\label{polygon-f}
    f(\mathcal{O}(\mathcal{L}(P_n)),t) = (1+t)^2 f(\mathcal{O}(\mathcal{C}_{2n}),t).
\end{equation}
By \cite[Theorem 1.2, Chapter 8]{log1968}, the product of log-concave polynomials is also log-concave. Hence, a positive answer to Conjecture~\ref{conj-f-vector} gives a positive answer to
\begin{conj}
\label{conj-f-vector2}
    For any positive integer $n$, the entries of the $f$-vector of $\mathcal{O}(\mathcal{L}(P_n))$ form a log-concave sequence.
\end{conj}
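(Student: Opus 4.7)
The proof plan is to reduce Conjecture~\ref{conj-f-vector2} to Conjecture~\ref{conj-f-vector} via the identity from Equation~(\ref{polygon-f}): $f(\mathcal{O}(\mathcal{L}(P_n)),t) = (1+t)^2 f(\mathcal{O}(\mathcal{C}_{2n}),t)$. Since $(1+t)^2$ has the coefficient sequence $(1,2,1)$, which is log-concave with no internal zeros, and the product of two such log-concave polynomials with nonnegative coefficients is log-concave by \cite[Theorem 1.2, Chapter 8]{log1968}, the problem reduces to establishing log-concavity of the $f$-vector of $\mathcal{O}(\mathcal{C}_{2n})$ itself. This reduction is essentially immediate; the substantive work is Conjecture~\ref{conj-f-vector}, for which the authors have computer verification up to $n = 200$ but no proof.

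To attack Conjecture~\ref{conj-f-vector}, I would pursue two parallel strategies. The first is to show that the homogenization
\[
F_n(x,y) = \sum_{k=0}^{2n} f_k(\mathcal{O}(\mathcal{C}_{2n})) \, x^k y^{2n-k}
\]
is a Lorentzian polynomial in the sense of Brändén--Huh, which would imply ultra log-concavity, hence log-concavity, of the coefficient sequence. A natural candidate for an underlying matroidal or simplicial structure is furnished either by the canonical triangulation of $\mathcal{O}(\mathcal{C}_{2n})$ or, more combinatorially, by the simplicial complex of connected-compatible partitions of $\hat{\mathcal{C}}_{2n}$ developed in Section~\ref{sec-f-vector}. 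The second strategy is a direct verification of the Newton-type inequalities $f_k^2 \geq f_{k-1} f_{k+1}$ from the explicit double sum in Theorem~\ref{therom:formula_for_fvector}. The $k$-dependence there appears only through the factor $\binom{2m}{i-k}$, and the slice $\sum_k \binom{2m}{i-k} t^k = t^{i-2m}(1+t)^{2m}$ is log-concave for each fixed pair $(i,m)$; the challenge is that nonnegative linear combinations of log-concave sequences need not be log-concave, so one must exploit the specific combinatorial meaning of the weights $\tfrac{2n}{i}\binom{i}{2m}\binom{n+m-1}{i-1}$ (counts of connected partitions of $O_{2n}$) in Lemma~\ref{lemma:p_n(i,j)}.

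The main obstacle is that Conjecture~\ref{conj-f-vector} is genuinely open and resistant to the standard tools. The remark following Theorem~\ref{therom:formula_for_fvector} shows that the $f$-polynomial is not real-rooted already for small $n$, ruling out the interlacing/Newton route. The Lorentzian approach is the most promising but requires identifying the right combinatorial witness — the CCP complex does not carry an evident matroid structure, and none of the standard Lorentzian sources (matroids, generalized permutohedra, M-convex sequences) apply directly. Barring a Lorentzian proof, the remaining route is an injective argument: construct an explicit injection from ordered pairs of $(k-1)$- and $(k+1)$-faces into ordered pairs of $k$-faces, respecting the CCP description via Proposition~\ref{prop:k-CCP's_and_pairs_(P,S)}. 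Locating either such an injection or a Lorentzian witness is where I expect the essential difficulty to lie; everything else in passing from Conjecture~\ref{conj-f-vector} to Conjecture~\ref{conj-f-vector2} is formal.
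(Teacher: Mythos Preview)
Your reduction in the first paragraph is exactly what the paper does: the paper does not prove this statement (it is stated as a conjecture), and merely observes---just as you do---that Equation~\eqref{polygon-f} together with \cite[Theorem 1.2, Chapter 8]{log1968} reduces it to Conjecture~\ref{conj-f-vector}. Your subsequent discussion of Lorentzian and injective strategies for Conjecture~\ref{conj-f-vector} goes beyond anything the paper attempts; the paper leaves that conjecture entirely open (with only the computer check up to $n=200$ and the observation that real-rootedness fails), so your correctly flagging it as the substantive obstacle is accurate.
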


The order polynomial behaves well under the ordinal sum of a poset $P$ and an element $z$ not in $P$:

\begin{prop}
    Let $P$ be a finite poset and $z$ be an element not in the ground set of $P$. Then
    \[
        \Omega_{z \oplus P}(t) = \Omega_{P \oplus z}(t) = \sum_{k=0}^{|P|} [e_k(P) + e_{k-1}(P)] \binom{t}{k},
    \]
    where $e_k(P)$ is the number of order preserving surjections $\alpha \colon P \to [k]$. In particular,
    \[
        L(z \oplus P,t) = L(P \oplus z,t) = \sum_{k=0}^{|P|} [e_k(P) + e_{k-1}(P)] \binom{t+1}{k}.
    \]
\end{prop}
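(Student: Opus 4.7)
The plan is to reduce everything to the classical expansion
\[
    \Omega_P(t) = \sum_{k \ge 0} e_k(P) \binom{t}{k},
\]
where $e_k(P)$ counts order-preserving surjections $P \twoheadrightarrow [k]$. This identity holds because any order-preserving map $\alpha \colon P \to [t]$ factors uniquely as a surjection onto its image (which is isomorphic as a chain to $[k]$ for $k = |\operatorname{im}\alpha|$) composed with the inclusion of a $k$-element subset of $[t]$, and there are $\binom{t}{k}$ choices for the subset. With this in hand, the result for $\Omega_{z \oplus P}(t)$ reduces to computing $e_k(z \oplus P)$ in terms of $e_k(P)$ and $e_{k-1}(P)$.

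To compute $e_k(z \oplus P)$, I would take an order-preserving surjection $\alpha \colon z \oplus P \twoheadrightarrow [k]$ and split into two cases according to the value $\alpha(z)$. Since $z$ is below every element of $P$ in $z \oplus P$, we must have $\alpha(z) = 1$ and $\alpha(y) \ge 1$ for every $y \in P$. The surjectivity condition then splits naturally: either some element of $P$ maps to $1$, in which case $\alpha|_P$ is itself an order-preserving surjection $P \twoheadrightarrow [k]$, contributing $e_k(P)$; or no element of $P$ maps to $1$, in which case $\alpha|_P$ is an order-preserving surjection onto $\{2,\dots,k\}$, which after reindexing is an order-preserving surjection $P \twoheadrightarrow [k-1]$, contributing $e_{k-1}(P)$. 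Hence $e_k(z \oplus P) = e_k(P) + e_{k-1}(P)$, and substituting into the expansion above gives the first formula.

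For $\Omega_{P \oplus z}(t)$ the argument is completely symmetric: here $z$ is the top of $P \oplus z$, so any order-preserving surjection $\alpha \colon P \oplus z \twoheadrightarrow [k]$ must send $z$ to $k$, and the same dichotomy (whether or not some element of $P$ also takes value $k$) yields $e_k(P \oplus z) = e_k(P) + e_{k-1}(P)$. Alternatively, one can invoke the order-reversing isomorphism $P \oplus z \cong (z \oplus P^{\mathrm{op}})^{\mathrm{op}}$ together with the standard identity $\Omega_P(t) = \Omega_{P^{\mathrm{op}}}(t)$ (which follows from the bijection $\alpha \mapsto (t+1-\alpha)$ on order-preserving maps).

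Finally, the Ehrhart statement is immediate from Stanley's identity $L(\mathcal{O}(P),t) = \Omega_P(t+1)$, which was already recalled in Section~\ref{sec-ehrhart}: applying it to $z \oplus P$ and $P \oplus z$ and using the shift $\binom{t+1}{k}$ in place of $\binom{t}{k}$ yields the second pair of equalities. No step in this plan is really an obstacle — the only mild subtlety is keeping track of the two surjection cases cleanly — but I would make sure to state the expansion $\Omega_P(t) = \sum_k e_k(P)\binom{t}{k}$ (or cite it from \cite{stanley1986two}) before invoking it, so that the proof is self-contained.
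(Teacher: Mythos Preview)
Your proposal is correct and takes essentially the same approach as the paper: both arguments compute $e_k$ of the extended poset by noting that the adjoined extremum must map to the extremal value and then splitting on whether that value is attained by some element of $P$. The only cosmetic difference is that the paper treats $P \oplus z$ (with $z$ maximal, splitting on $|\alpha^{-1}(k)|$) while you begin with $z \oplus P$ (with $z$ minimal, splitting on $\alpha^{-1}(1)$) and then note the symmetry; your write-up is also a bit more careful in stating the underlying expansion $\Omega_P(t)=\sum_k e_k(P)\binom{t}{k}$ and in deducing the Ehrhart statement, which the paper leaves implicit.
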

\begin{proof}
    Consider the poset $P \oplus z$ and let $E_k (P \oplus z)$ be the set of order-preserving surjections
    \begin{equation}
        \alpha \colon P \oplus z \to [k].
    \end{equation}
    Since $z$ is a maximum element of $P \oplus z$, $\alpha (z) = k$ for every $\alpha \in E_k (P \oplus z)$.

    The set $E_k (P \oplus z)$ can be written as the disjoint union $E_k^1 (P \oplus z) \sqcup E_k^2 (P \oplus z)$, where
    \[
        E_k^1 (P \oplus z) = \{ \alpha \in  E_k (P \oplus z) \colon |\alpha^{-1}(k)| = 1 \}
    \]
    and
    \[
        E_k^2 (P \oplus z) = \{ \alpha \in  E_k (P \oplus z) \colon |\alpha^{-1}(k)| > 1 \}.
    \]
    Since $E_k^1 (P \oplus z)$ is in bijection with $E_{k-1}(P)$ and $E_k^2 (P \oplus z)$ is in bijection with $E_k (P)$, the result follows.
\end{proof}

Unfortunately, the ordinal sum does not preserve the nonnegativity of the coefficients of the order polynomial. For example, if $A(4)$ is the antichain on $4$ elements, then $[t]\Omega_{A(4)}(t) = 0$, but $[t]\Omega_{z \oplus A(4)}(t) = -1/30$.

With the help of Sage \cite{sagemath}, it was verified that $[t]\Omega_{\mathcal{L}(P_7)}(t) = -3/1430$. However, we could not find an example of polygon $P_n$ such that the Ehrhart polynomial of $\mathcal{O}(\mathcal{L}(P_n))$ has negative coefficients. The following question is still open:

\begin{question}
    Is $\mathcal{O}(\mathcal{L}(P_n))$ Ehrhart-nonnegative for every natural number $n \geq 3$?
\end{question}

The $h^*$-polynomial of the order polytope of the ordinal sum of two finite posets, $P$ and $Q$, is also well-known:
\begin{thm}\cite[Proposition 7.4]{MR4108204}
\[
    h^*(\mathcal{O}(P \oplus Q),t) = h^*(\mathcal{O}(P),t) \cdot h^*(\mathcal{O}(Q),t).
\]
\end{thm}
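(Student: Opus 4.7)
The plan is to combine a clean combinatorial decomposition of order-preserving maps from $P \oplus Q$ with the generating-function definition of the $h^*$-polynomial, reducing the claim to a Cauchy product identity.

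First, I would establish the recursion
\[
\Omega_{P \oplus Q}(m) \;=\; \sum_{j=1}^{m} \bigl( \Omega_P(j) - \Omega_P(j-1) \bigr)\, \Omega_Q(m - j + 1)
\]
for all integers $m \geq 1$, by stratifying order-preserving maps $f : P \oplus Q \to [m]$ according to $j := \max_{x \in P} f(x)$. The restriction $f|_P$ is an order-preserving map $P \to [j]$ that attains the value $j$ (hence $\Omega_P(j) - \Omega_P(j-1)$ choices), and the restriction $f|_Q$ is an order-preserving map $Q \to \{j, j+1, \dots, m\}$ (hence $\Omega_Q(m-j+1)$ choices after shifting). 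This is the only combinatorial input needed.

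Next, I would push this recursion through Stanley's relation $L(\mathcal{O}(P), k) = \Omega_P(k+1)$ and the defining identity $\sum_{k \geq 0} \Omega_P(k+1)\, t^k = h^*(\mathcal{O}(P),t)/(1-t)^{|P|+1}$. Using $\Omega_P(0)=0$ together with a one-step index shift, the generating function for the forward difference $\Omega_P(k+1) - \Omega_P(k)$ becomes $h^*(\mathcal{O}(P),t)/(1-t)^{|P|}$, with one fewer factor in the denominator. Substituting into the recursion converts the finite convolution into a Cauchy product:
\[
\sum_{m \geq 0} \Omega_{P \oplus Q}(m+1)\, t^m \;=\; \frac{h^*(\mathcal{O}(P),t)}{(1-t)^{|P|}} \cdot \frac{h^*(\mathcal{O}(Q),t)}{(1-t)^{|Q|+1}}.
\]
Since $|P \oplus Q| = |P| + |Q|$, comparing this with the definition of $h^*(\mathcal{O}(P \oplus Q), t)$ and using uniqueness of $h^*$-numerators yields the multiplicativity at once.

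The main obstacle is a careful justification of the combinatorial decomposition of $f|_P$ and $f|_Q$ together with the bookkeeping of index shifts that telescope the convolution into the Cauchy product, along with handling the degenerate cases $P = \emptyset$ or $Q = \emptyset$ (where $\Omega_\emptyset \equiv 1$ and $h^*(\mathcal{O}(\emptyset),t) = 1$, so the identity holds trivially). An arguably shorter alternative avoids generating functions entirely: by Theorem~\ref{thm:JHstatistic}, choosing natural labelings of $P$ and $Q$ so that every label of $P$ is strictly smaller than every label of $Q$ gives a natural labeling of $P \oplus Q$, the linear extensions of $P \oplus Q$ are exactly concatenations $\sigma\tau$ with $\sigma \in \operatorname{JH}(P)$ and $\tau \in \operatorname{JH}(Q)$, and because the last letter of $\sigma$ is smaller than the first letter of $\tau$ there is no descent at the seam. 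Hence $\operatorname{des}(\sigma\tau) = \operatorname{des}(\sigma) + \operatorname{des}(\tau)$ and the double sum factors, producing $h^*(\mathcal{O}(P),t)\cdot h^*(\mathcal{O}(Q),t)$ directly.
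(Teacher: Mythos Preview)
The paper does not supply its own proof of this statement; it is quoted verbatim as \cite[Proposition 7.4]{MR4108204} with no argument given. So there is nothing in the paper to compare against.

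That said, both routes you sketch are correct. The generating-function argument goes through exactly as you describe: stratifying by $j=\max_{x\in P}f(x)$ yields the convolution, the forward-difference trick kills one $(1-t)$ from the denominator on the $P$ side, and the Cauchy product matches the Ehrhart series of $\mathcal{O}(P\oplus Q)$ since $|P\oplus Q|=|P|+|Q|$. Your second route via Theorem~\ref{thm:JHstatistic} is shorter and arguably more in the spirit of this paper, which already invokes that theorem: with the stacked natural labeling, linear extensions of $P\oplus Q$ are precisely concatenations $\sigma\tau$, the seam contributes no descent, and $\operatorname{des}(\sigma\tau)=\operatorname{des}(\sigma)+\operatorname{des}(\tau)$ factors the sum immediately. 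Either would serve as a self-contained proof here; the descent argument is the cleaner choice if you want to keep the paper internally consistent.
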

In particular, if $z$ is not an element of $P$, then
\[
    h^*(\mathcal{O}(P),t) = h^*(\mathcal{O}(z \oplus P),t) = h^*(\mathcal{O} (P \oplus z),t),
\]
which implies
\begin{equation}
\label{polygon-h}
    h^*(\mathcal{O}(\mathcal{L}(P_n)),t) = h^*(\mathcal{O}(\mathcal{C}_{2n}),t).
\end{equation}

If $P$ is a poset isomorphic to the face lattice of a polytope, we call $\mathcal{O}(P)$ a \emph{polytope order polytope}, or \emph{POP}, for short. For polytopes of dimensions $0$ (points) and $1$ (line segments), their POPs are completely understood. On the other hand, Equations \eqref{polygon-f} and \eqref{polygon-h} describe the $f$-polynomial and the $h^*$-polynomial of POPs of $2$-dimensional polytopes, so a natural question is:
\begin{question}
    How can one describe the $f$-polynomial, the Ehrhart polynomial, and the $h^*$-polynomial of POPs of $n$-dimensional polytopes for $n \geq 3$?
\end{question}

Two natural constructions arise from the $2$-dimensional case:
\begin{enumerate}[(i)]
    \item pyramids over polygons, and
    \item prisms over polygons.
\end{enumerate}
In fact, the face lattice of a pyramid over a polygon $P_n$ is isomorphic to $\mathcal{L}(P_n) \times \mathcal{P}_2$, where $\mathcal{P}_2$ is the chain poset on $2$ elements, and the face lattice of a prism over $P_n$ is isomorphic to
\begin{equation*}
    \hat{0} \oplus [(\mathcal{L}(P_n) \setminus \{ \hat{0} \}) \times (\mathcal{L}(S) \setminus \{ \hat{0} \})],
\end{equation*}
where $S$ is a line segment. Hence, a first step to studying POPs of $3$-dimensional polytopes could be investigating the order polytope of the cartesian product of two posets. This sounds like a hard problem, but, since $\mathcal{P}_2$ and $\mathcal{L}(S)$ are posets with few elements and order relations, the following question could perhaps be more feasible:
\begin{question}
\label{question-cartesian}
    Given a finite poset $P$, how can one describe the $f$-vector and the $h^*$-vector of $\mathcal{O}(P \times \mathcal{P}_2)$ and $\mathcal{O}(P \times (\mathcal{L}(S) \setminus \{ \hat{0} \}))$?
\end{question}

The particular case of taking the cartesian product between a finite poset $P$ and the $2$-chain poset $\mathcal{P}_2$ is interesting because of the following: recall that the \emph{Boolean lattice} $B_n$ is the poset on $2^{[n]}$ such that $A \preceq B$ in $B_n$ if $A \subseteq B$. It is not hard to verify that
\[
    B_n \cong \overbrace{\mathcal{P}_2 \times \mathcal{P}_2 \times \cdots \times \mathcal{P}_2}^{n \mbox{ times}}.
\]
An $n$-simplex is an $n$-dimensional polytope that is the convex hull of $n+1$ affinely independent points, and its face lattice is isomorphic to $B_{n+1}$, that is, answering Question~\ref{question-cartesian} gives information about POPs of simplices. In particular, if the real-rootedness of $h^*(\mathcal{O}(P),t)$ is preserved under the cartesian product, that is, if the real-rootedness of $h^*(\mathcal{O}(P),t)$ implies the real-rootedness of $h^*(\mathcal{O}(P \times \mathcal{P}_2),t)$, where $P$ is a finite poset, then the following conjecture would be confirmed to be true:
\begin{conj}
    $h^*(\mathcal{O}(B_n),t)$ is real-rooted for every natural number $n$.
\end{conj}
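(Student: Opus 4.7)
The plan is to recast the conjecture via Theorem~\ref{thm:JHstatistic}: equipping $B_n$ with a natural labeling $\omega$, we have
\[
h^*(\mathcal{O}(B_n),t) = \sum_{\pi \in \operatorname{JH}(B_n,\omega)} t^{\operatorname{des}(\pi)} =: W_{B_n}(t),
\]
so it suffices to show that the $P$-Eulerian polynomial of the Boolean lattice is real-rooted. I would first compute $W_{B_n}(t)$ for $n \le 5$ by direct enumeration of linear extensions to build an experimental picture, check whether the consecutive polynomials $W_{B_n}(t)$ and $W_{B_{n+1}}(t)$ are mutually interlacing, and look for a closed form or recursion — this determines which of the standard real-rootedness frameworks (common interlacers, $s$-Eulerian inversion sequences, stable polynomials, compatible sequences of Brändén--Savage--Visontai) is most promising.

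The strategic heart of the plan is to exploit the product structure $B_n \cong B_{n-1} \times \mathcal{P}_2$ mentioned in Section~\ref{sec-open-further}. Writing elements of $B_n$ as pairs $(A,i)$ with $A \in B_{n-1}$ and $i \in \{0,1\}$, a linear extension of $B_n$ is a shuffle of two linear extensions of the two copies of $B_{n-1}$ subject to $(A,0)$ preceding $(A,1)$ for every $A$. I would introduce refined polynomials
\[
W_{B_n,k}(t) := \sum_{\substack{\pi \in \operatorname{JH}(B_n,\omega) \\ \operatorname{stat}(\pi)=k}} t^{\operatorname{des}(\pi)}
\]
where $\operatorname{stat}$ records enough interleaving information (for instance, the position of the first top-layer element, or the descent type at the interface between the two layers) to close a matrix recursion $W_{B_{n+1},\ell}(t) = \sum_k c_{\ell,k}(t)\, W_{B_n,k}(t)$ with nonnegative polynomial coefficients $c_{\ell,k}(t)$. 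If the coefficient matrix satisfies the interlacing-preservation criterion, then the Wagner--Brändén lemma inductively forces $W_{B_n}(t) = \sum_k W_{B_n,k}(t)$ to be real-rooted.

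The principal obstacle is identifying the right refining statistic: the shuffles producing valid linear extensions of $B_n$ depend on the full permutation data, not just a numerical label, and the descent count of a shuffle is a subtle function of relative positions, so naive bucketing is unlikely to yield the required recursion. I expect this step to require either a bijective encoding of $\operatorname{JH}(B_n,\omega)$ as a known real-rooted family (e.g., as $s$-inversion sequences for a suitable sequence $s$, à la Savage--Visontai) or an identification of $B_n$ with a poset falling under Brändén's sign-graded classification. As a fallback, I would attempt a multivariate lift: assign a variable $t_A$ to each $A \in B_n$ and aim to prove the resulting refined polynomial is real stable, recovering real-rootedness of $W_{B_n}(t)$ by setting $t_A = t$. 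Should neither route succeed, the framework of Section~\ref{sec-gamma} (using pureness and natural labelability of $B_n$) at least delivers $\gamma$-nonnegativity, hence unimodality, leaving real-rootedness as the true open content of the conjecture.
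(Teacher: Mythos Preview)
The statement you are addressing is a \emph{conjecture} in the paper's open-problems section; the paper offers no proof of it. There is therefore nothing to compare your proposal against. What you have written is not a proof but a research plan, and you are explicit about this: the ``strategic heart'' is an attempted interlacing recursion indexed by an unspecified refining statistic, and you concede that identifying the right statistic is ``the principal obstacle'' and may ``require'' tools you have not yet located. That is the gap, and it is the whole difficulty of the conjecture.

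A few concrete remarks on the plan itself. The reformulation via Theorem~\ref{thm:JHstatistic} is correct, so the question is indeed the real-rootedness of the $P$-Eulerian polynomial of $B_n$; this is the well-known Neggers--Stanley-type question for the Boolean lattice, and it remains open. The product decomposition $B_n \cong B_{n-1}\times\mathcal{P}_2$ is the natural inductive handle, but the descent count of a shuffle of two linear extensions of $B_{n-1}$ is governed by the full relative order of the $2^{n-1}$ interleaved pairs, not by a single scalar statistic, so any matrix recursion of the shape you propose would need exponentially many buckets (or a very clever coarsening) before the interlacing-preservation machinery of Wagner--Br\"and\'en--Savage--Visontai could apply. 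No such coarsening is currently known. Your fallback observation is correct: $B_n$ is graded and naturally labeled, so Br\"and\'en's theorem already gives $\gamma$-nonnegativity and hence unimodality of $h^*(\mathcal{O}(B_n),t)$; but that is strictly weaker than real-rootedness and is not new. In short, the proposal is a reasonable outline of where one would start, but it does not supply the missing idea, and the paper does not either.
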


\bibliographystyle{plain}
\bibliography{biblio}
\end{document}